\newtheorem*{rep@theorem}{\rep@title}
\newcommand{\newreptheorem}[2]{%
\newenvironment{rep#1}[1]{%
 \def\rep@title{#2 \ref{##1}}%
 \begin{rep@theorem}}%
 {\end{rep@theorem}}}
\newtheorem{theorem}{Theorem}
\newtheorem{lemma}[theorem]{Lemma}
\newtheorem{proposition}[theorem]{Proposition}
\newtheorem{corollary}[theorem]{Corollary}
\theoremstyle{definition}
\newtheorem{definition}[theorem]{Definition}
\newtheorem{remark}[theorem]{Remark}
\numberwithin{equation}{section}
\numberwithin{theorem}{section}
\begin{document}

\title[Anosov flows, growth rates on covers and group extensions]{Anosov flows, growth rates on covers and group extensions of subshifts}

\author{Rhiannon \textsc{Dougall} and Richard \textsc{Sharp}}

\address{School of Mathematics, University of Bristol, Bristol, BS8 1TW, U.K.}
\email{R.Dougall@bristol.ac.uk}

\address{Mathematics Institute, University of Warwick,
Coventry CV4 7AL, U.K.}
\email{R.J.Sharp@warwick.ac.uk}

\begin{abstract} 
The aim of this paper is to study growth properties of group extensions of hyperbolic dynamical systems, where we do not assume that the extension satisfies the symmetry conditions seen, for example, 
in the work of Stadlbauer on symmetric group extensions and of the authors on geodesic flows.
Our main application is to growth rates of periodic orbits for covers of an Anosov flow:
we reduce the problem of counting periodic orbits in an amenable cover $X$ to counting in a maximal abelian subcover $X^{\mathrm{ab}}$. In this way, we obtain an equivalence for the Gurevi\v{c} entropy: $h(X)=h(X^{\mathrm{ab}})$ if and only if the covering group is amenable.
In addition, when we project the periodic orbits for amenable covers $X$ to the compact factor $M$, they equidistribute with respect to a natural equilibrium measure -- in the case of the geodesic flow, the measure of maximal entropy.
\end{abstract}

\maketitle


\normalsize

\section{Introduction}

The aim of this paper is to study growth properties of group extensions of hyperbolic dynamical systems, where we do not assume that the extension satisfies the symmetry conditions imposed in \cite{DougallSharp} and \cite{Stadlbauer13}, for example.  
Our main application is to growth rates of periodic orbits for covers of Anosov flows
and we obtain generalisations of results previously known for geodesic flows over compact (or even convex
co-compact) negatively curved manifolds \cite{DougallSharp}, \cite{Roblin}, \cite{Stadlbauer13}.
We begin by describing these results.

Let $M$ be a compact smooth Riemannian manifold and let $\phi^t : M \to M$ be a transitive 
Anosov flow.
Then $\phi^t$ has a countable set of periodic orbits $\mathcal P(\phi)$ and, for 
$\gamma \in \mathcal P(\phi)$,
we write $l(\gamma)$ for its period. It is well-known that the growth rate of periodic orbits is given by the topological entropy of $\phi^t$; more precisely
\[
\lim_{T \to \infty} \frac{1}{T} \log \#\{\gamma \in \mathcal P(\phi) \hbox{ : } l(\gamma) \le T\} =: h
= h_{\mathrm{top}}(\phi).
\]

Now suppose that $X$ is a regular cover of $M$ with covering group $G$, i.e. $G$ acts 
freely and isometrically on $X$ such that $M = X/G$.
Let $\phi_X^t : X \to X$ be the lifted flow, which we assume to be transitive.
We will be interested in the growth of periodic orbits for $\phi_X^t$.
If $G$ is finite then $\phi_X^t$ is also an Anosov flow and $h_{\mathrm{top}}(\phi_X)=
h_{\mathrm{top}}(\phi)$.
If $G$ is infinite the situation is more interesting.
First, note that if $\phi_X^t$ has a periodic orbit $\gamma$ then the translates of
 $\gamma$ by the action of $G$ give infinitely many periodic orbits with the same period,
 so a naive definition of periodic orbit growth does not make sense. Rather, we follow the approach of 
\cite{PaulinPollicottSchapira15} and, choosing an open, relatively compact set $W \subset X$, 
define 
\[
h(X) := \limsup_{T \to \infty} \frac{1}{T} \log \#\{\gamma \in \mathcal P(\phi_X) \hbox{ : }
l(\gamma) \le T, \ \gamma \cap W \neq \varnothing\}.
\]
As the notation suggests, $h(X)$ is independent of the choice of $W$ (see Lemma \ref{gurevich}).
The above definition is analogous to that made by Gurevi\v{c} for subshifts of finite 
type \cite{Gurevic} (see below) and it is natural to call $h(X)$ the Gurevi\v{c} entropy of $\phi_X^t$.
It is easy to see that $h(X) \le h$.

Let us now restrict to the special case that $M=SV$ is the unit-tangent bundle over a compact manifold $V$ with 
negative sectional curvatures and that $\phi^t$ is the geodesic flow. (Notice that this flow admits a time-reversing involution $\iota : SV \to SV$ defined by $\iota(x,v) = (x,-v)$, where $x \in V$ and $v \in S_xV$.)
Combining the results of \cite{Roblin} and \cite{DougallSharp}, we have $h(X) = h$ if and only if $G$ is amenable.
(Recall that $G$ is amenable if it has a \emph{Banach mean}, that is, there is a $G$-invariant bounded linear functional $\mathfrak M : \ell^\infty(G,\mathbb R) \to \mathbb R$ such that, for $x \in \ell^\infty(G,\mathbb R)$,
$\inf_{g \in G} x(g) \le \mathfrak M(x) \le \sup_{g \in G} x(g)$.)
This equivalence fails to hold for general Anosov flows -- however, the results of this paper will say that, for an amenable cover, the counting is reduced to the maximal abelian subcover.
This will be discussed in greater detail in section \ref{anosovflows} but we note at this point that if $Y$ is an abelian cover of $M$ then $h(Y) =h$ if and only if $\Phi^Y_{\mu_0}=0$, 
where 
$\Phi^Y_{\mu_0}$ is the (relative) winding cycle associated to the measure of maximal entropy for $\phi^t$.
Our main result is that the correct comparison is between $h(X)$ and $h(X^{\mathrm{ab}})$,
where $X^{\mathrm{ab}}$ is the maximal abelian subcover of the cover $X \to M$
(so that the covering group for 
$X^{\mathrm{ab}} \to M$ is $G^{\mathrm{ab}} = G/[G:G]$, the abelianization of $G$).
Clearly, $h(X) \le h(X^{\mathrm{ab}})$ and we have the following theorem.

\begin{theorem} \label{intro-main}
We have $h(X) = h(X^{\mathrm{ab}})$ if and only if $G$ is amenable.
\end{theorem}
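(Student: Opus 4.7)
The plan is to translate the statement into symbolic dynamics and then prove a Kesten-type amenability equivalence for group extensions of subshifts of finite type (SFTs) that does not require any symmetry of the cocycle. By the Bowen--Ratner construction, $\phi^t$ is coded as a suspension over a transitive SFT $(\Sigma,\sigma)$ with a H\"older roof function $r$. The cover $X \to M$ then lifts to a skew product $\Sigma \times G$ driven by a H\"older cocycle $\psi : \Sigma \to G$; composing with $G \to G^{\mathrm{ab}}$ gives the corresponding skew product over $X^{\mathrm{ab}}$. Under this coding, counting periodic orbits meeting a relatively compact $W \subset X$ becomes counting returns to a finite subsystem of $\Sigma \times G$, so $h(X)$ (respectively $h(X^{\mathrm{ab}})$) is the unique $s$ at which the Gurevi\v{c} pressure of $-sr$ on the appropriate group extension vanishes.

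Factoring through the tower $\Sigma \times G \to \Sigma \times G^{\mathrm{ab}} \to \Sigma$ turns the problem into comparing Gurevi\v{c} pressures of a group extension by $N := [G,G]$. Because $G^{\mathrm{ab}}$ is always amenable, the short exact sequence $1 \to N \to G \to G^{\mathrm{ab}} \to 1$ shows that amenability of $G$ is equivalent to amenability of $N$. The inequality $h(X) \le h(X^{\mathrm{ab}})$ is automatic from the factor map, so Theorem \ref{intro-main} reduces to showing that the $N$-extension $\Sigma \times G \to \Sigma \times G^{\mathrm{ab}}$ preserves the Gurevi\v{c} pressure of $-hr$ if and only if $N$ is amenable.

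For this core equivalence I would set up Ruelle transfer operators $\mathcal L_G$ and $\mathcal L_{G^{\mathrm{ab}}}$ on a Banach space of H\"older functions supported over a finite sub-SFT and compare their spectral radii. The first step is a recentring: modify $-hr$ on $\Sigma \times G^{\mathrm{ab}}$ by a coboundary and a character $\chi \in \mathrm{Hom}(G^{\mathrm{ab}},\mathbb R)$ so that the equilibrium state realising the Gurevi\v{c} pressure has vanishing winding cycle in $G^{\mathrm{ab}}$. With this recentring $\mathcal L_G$ can be written as a twisted convolution operator fibred over $N$, and one can attempt to transfer approximate eigenfunctions between $\mathcal L_{G^{\mathrm{ab}}}$ and $\mathcal L_G$. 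If $N$ is amenable, a F\o lner-averaging argument produces approximate eigenfunctions of $\mathcal L_G$ with the same leading eigenvalue as $\mathcal L_{G^{\mathrm{ab}}}$, giving equality of pressures; if $N$ is non-amenable, a uniform spectral gap for the regular representation combined with the recentred cocycle forces a strict drop of spectral radius, hence $h(X) < h(X^{\mathrm{ab}})$.

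The main obstacle is precisely the asymmetry: in the settings of \cite{Stadlbauer13} and \cite{DougallSharp} a symmetry $\psi \circ \iota = -\psi$ makes $\mathcal L_G$ essentially self-adjoint against a natural inner product, and Kesten's original argument applies directly. Without it one must construct the correct reference measure on $N$ from a Patterson-type density attached to the recentred base, and then verify that $\mathcal L_{G^{\mathrm{ab}}}$-eigenvectors survive the transfer to the $N$-fibred operator in a quantitatively controlled way; the recentring is precisely what ensures the right drift condition so that these constructions converge. Once this spectral machinery is in place, translating back to periodic-orbit counts via the standard symbolic correspondence between closed $\sigma$-orbits and periodic orbits of $\phi_X^t$ should be routine.
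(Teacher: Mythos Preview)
Your symbolic reduction and the non-amenable direction are essentially the paper's: code by a suspension over an SFT, recentre with the character $\xi$ so that $h(X^{\mathrm{ab}})$ becomes the zero of a Gurevi\v{c} pressure, and then invoke the Stadlbauer--Jaerisch spectral gap $P_{\mathrm{Gur}}(f,T_\psi)\le\log\mathrm{spr}_{\mathcal H}(\mathcal L_f)<P(f,\sigma)$ when $G$ is non-amenable. (The paper applies this directly to the $G$-extension of the finite SFT $\Sigma$ rather than to an $N$-extension of the countable shift $\Sigma\times G^{\mathrm{ab}}$; your factoring through $N=[G,G]$ is valid but buys nothing and forces you to check the countable-base hypotheses of \cite{Stadlbauer13}.)

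The genuine gap is in the amenable direction. F\o lner averaging of approximate eigenfunctions is the mechanism that proves $\log\mathrm{spr}_{\mathcal H}(\mathcal L_f)=P(f,\sigma)$ in the symmetric case, but without symmetry one only has the inequality $P_{\mathrm{Gur}}(f,T_\psi)\le\log\mathrm{spr}_{\mathcal H}(\mathcal L_f)$, and this can be strict---the paper gives an explicit $\mathbb Z$-extension example in the introduction where it is. So showing that the spectral radii agree does \emph{not} give equality of Gurevi\v{c} pressures, and ``transferring $\mathcal L_{G^{\mathrm{ab}}}$-eigenvectors'' cannot close this gap. What the paper actually does is different: it builds the Stadlbauer--Patterson family of $\sigma$-finite measures $\nu_{\eta,g}$ on $\Sigma^+\times G$ at the critical parameter $\rho=e^{P_{\mathrm{Gur}}(f,T_\psi)}$, proves an eigenmeasure relation $\rho\,\nu_{\eta,g}=\sum_{u}e^{f(u\eta)}\nu_{u\eta,g\psi(u)}$, and then applies a \emph{Banach mean} (not F\o lner averaging) to the bounded function $g\mapsto\log\bigl(\nu_{o,ga}(v)/\nu_{o,g}(v)\bigr)$ to extract a genuine homomorphism $\chi:G\to\mathbb R$. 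Since any such $\chi$ factors through $G^{\mathrm{ab}}$, one gets $\chi(\psi^n(x))=0$ whenever $(\psi^{\mathrm{ab}})^n(x)=0$, and the eigenmeasure inequality then bounds the $T_{\psi^{\mathrm{ab}}}$-periodic sums by $\rho^n$ directly. This is Roblin's trick; it bypasses the spectral radius entirely and is the main new idea. Your proposal gestures at a Patterson density but does not contain this step, and without it the amenable half does not go through.
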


An immediate consequence of this result and the characterisation of $h(X^{\mathrm{ab}})$ in Proposition 
\ref{asymptotics_abeliancover} is the following corollary, which expresses $h(X)$ in terms of 
measure-theoretic entropies of 
$\phi$-invariant measures.
Let $\mathcal M(\phi)$ denote the set of $\phi$-invariant probability measures on $M$ and,
for $\mu \in \mathcal M(\phi)$, let $h_\phi(\mu)$ denote the measure-theoretic entropy of 
$\phi$ with respect to $\mu$.

\begin{corollary} \label{corollary-intro-main}
We have $h(X)=\sup\left\{ h_\phi(\mu) : \mu \in \mathcal M(\phi),
\ \Phi^{X^{\mathrm{ab}}}_\mu = 0 \right\}$ if and only if $G$ is amenable.
\end{corollary}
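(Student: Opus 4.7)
The plan is to read this as a direct combination of Theorem \ref{intro-main} and the characterisation of $h(X^{\mathrm{ab}})$ promised in Proposition \ref{asymptotics_abeliancover}. I would expect Proposition \ref{asymptotics_abeliancover} to be stated (and later proved, in the abelian setting, via a standard variational/pressure argument combined with the theory of winding cycles) in the form
\[
h(X^{\mathrm{ab}}) \;=\; \sup\left\{ h_\phi(\mu) : \mu \in \mathcal M(\phi),\ \Phi^{X^{\mathrm{ab}}}_\mu = 0 \right\}.
\]
Granting this identity, the corollary becomes a logical rearrangement of the biconditional in Theorem \ref{intro-main}.

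For the ``only if'' direction, assume $G$ is amenable. Then Theorem \ref{intro-main} gives $h(X)=h(X^{\mathrm{ab}})$, and substituting the formula for $h(X^{\mathrm{ab}})$ yields exactly the claimed supremum identity for $h(X)$. For the ``if'' direction, assume the supremum identity holds. By Proposition \ref{asymptotics_abeliancover}, the right-hand side equals $h(X^{\mathrm{ab}})$, so $h(X)=h(X^{\mathrm{ab}})$, and Theorem \ref{intro-main} then forces $G$ to be amenable. No additional structural input is needed beyond these two ingredients.

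The only point requiring a brief sanity check is that the relative winding cycle $\Phi^{X^{\mathrm{ab}}}_\mu$ appearing in the supremum really is the quantity characterising triviality of the $H_1$-valued cocycle governing the abelian cover; this is built into the definition of $\Phi^{X^{\mathrm{ab}}}_\mu$ alluded to in the discussion preceding Theorem \ref{intro-main}, where the author recalls that $h(Y)=h$ for an abelian cover $Y$ iff $\Phi^Y_{\mu_0}=0$. Thus, once Proposition \ref{asymptotics_abeliancover} is in hand, there is no further obstacle and the proof is just the two-line deduction above. The genuinely hard work lives in Theorem \ref{intro-main} and in the proof of Proposition \ref{asymptotics_abeliancover}, not in this corollary.
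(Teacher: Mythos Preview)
Your proposal is correct and matches exactly what the paper does: the paper states that Corollary \ref{corollary-intro-main} is ``an immediate consequence of [Theorem \ref{intro-main}] and the characterisation of $h(X^{\mathrm{ab}})$ in Proposition \ref{asymptotics_abeliancover}'', and gives no further argument. One trivial remark: you have the labels ``if'' and ``only if'' swapped (in ``$A$ if and only if $B$'', the ``if'' direction is $B\Rightarrow A$), but the mathematical content of each direction is correct.
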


Readers more familiar with geodesic flows may wonder at this point, when is $\pi_1(M)$ non-amenable? In general it is very difficult to say anything about the topology of $M$ as the flow lines may not be efficient for measuring the fundamental group (for a discussion in dimension 3 see \cite{Fenley}, \cite{BartelmeFenley}). Nevertheless, our method is able to relate orbits and topology for \textit{transitive} covers.

The proof of Theorem \ref{intro-main} will use symbolic dynamics and, in particular, group extensions of 
subshifts of finite type. As part of the our approach we will obtain results in this seting which
are of independent interest. 
(Some definitions are deferred until section \ref{section:ssftge}.)
Let $\sigma : \Sigma \to \Sigma$ be a mixing subshift of finite type.
We will write $h(\sigma)$ for the topological entropy of $\sigma$ and note that this is equal to the 
exponential growth rate of periodic points,
\[
h(\sigma) = \lim_{n \to \infty} \frac{1}{n} \log \#\{x \in \Sigma \hbox{ : } \sigma^nx=x\}.
\]
For a countable group $G$ and a function $\psi : \Sigma \to G$, 
we consider the skew-product dynamical system
\[
T_\psi : \Sigma\times G \to \Sigma\times G
:
(x,g)\mapsto (\sigma x, g\psi(x)).
\]
We say that 
$T_\psi : \Sigma \times G \to \Sigma \times G$
is a \emph{$G$-extension} of $\sigma : \Sigma \to \Sigma$. 
We will always assume that $T_\psi : \Sigma\times G \to \Sigma\times G$ is transitive.
Let us consider the periodic points of $T_\psi$. Clearly,
$T_\psi^n(x,g) = (x,g)$ if and only if 
$\sigma^n x=x$ and $\psi^n(x)=e$, the identity in $G$.
If $G$ is infinite then $T_\psi^n$ has infinitely many periodic points of fixed period $n$;
however, we can use the following definition, due to Gurevi\v{c} \cite{Gurevic}, to obtain a growth
rate. We say that the {\it Gurevi\v{c} entropy} of $T_\psi$ is
\[
h_{\mathrm{Gur}}(T_\psi) = \limsup_{n\to \infty} \frac{1}{n}\log 
\#\{x \in \Sigma \hbox{ : } \sigma^nx=x, \ \psi^n(x)=e\}.
\]
Clearly, $h_{\mathrm{Gur}}(T_\psi) \le h(\sigma)$.
However,
it is easy to construct examples where $G$ is amenable but
$h_{\mathrm{Gur}}(T_\psi) < h(\sigma)$.
For example, let $\Sigma = \{0,1,2\}^{\mathbb Z}$, $G = \mathbb Z$
and define $\psi$ by $\psi(x)=\psi(x_0)$ with $\psi(0)=\psi(1)=1$ and $\psi(2)=-1$.
Then $T_\psi$ is transitive.
By \cite{PollicottSharp94}, 
\[
h_{\mathrm{Gur}}(T_\psi) = \sup\left\{h_\sigma(m) \hbox{ : } m \in \mathcal M(\sigma), \ \int \psi \, dm=0\right\}.
\]
Now, the measure of maximal entropy $m_0$ for $\sigma : \Sigma \to \Sigma$ is the 
$(1/3,1/3,1/3)$-Bernoulli measure and, clearly,
$\int \psi \, dm_0 = 1/3 \ne 0$. Thus, by the Variational Principle,
$h_{\mathrm{Gur}}(T_\psi) < h(\sigma)$.

We shall show that, in fact, the natural comparison is between
 $h_{\mathrm{Gur}}(T_\psi)$ and $h_{\mathrm{Gur}}(T_{\psi^{\mathrm{ab}}})$,
where $T_{\psi^{\mathrm{ab}}} : \Sigma \times G^{\mathrm{ab}} \to \Sigma \times G^{\mathrm{ab}}$
is the induced $G^{\mathrm{ab}}$-extension, where $G^{\mathrm{ab}} = G/[G,G]$ is the
abelianization of $G$. More precisely, if $\pi : G \to G^{\mathrm{ab}}$ is the natural projection then
$\psi^{\mathrm{ab}} = \pi \circ \psi$.

\begin{theorem} \label{intro-gurevicentropy}
If $T_\psi$ is transitive then $h_{\mathrm{Gur}}(T_\psi) =h_{\mathrm{Gur}}(T_{\psi^{\mathrm{ab}}})$
if and only if $G$ is amenable.
\end{theorem}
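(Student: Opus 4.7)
The plan is to view $T_\psi$ as a group extension of $T_{\psi^{\mathrm{ab}}}$ with structure group $N := [G,G]$. The projection $(x,g)\mapsto(x,\pi(g))$ semi-conjugates $T_\psi$ to $T_{\psi^{\mathrm{ab}}}$, and the $T_\psi$-periodic points counted by $h_{\mathrm{Gur}}(T_\psi)$ are precisely those $T_{\psi^{\mathrm{ab}}}$-periodic points whose full $G$-cocycle $\psi^n$ returns to the identity (rather than merely to $N$); this immediately gives the trivial inequality $h_{\mathrm{Gur}}(T_\psi)\le h_{\mathrm{Gur}}(T_{\psi^{\mathrm{ab}}})$. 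Since $G^{\mathrm{ab}}$ is abelian, hence amenable, and amenability is closed under group extensions, $G$ is amenable if and only if $N$ is amenable, so the theorem reduces to showing that the Gurevi\v{c} entropy of this $N$-extension equals that of its base iff $N$ is amenable. The natural tool is an operator-valued Ruelle transfer operator $\mathcal{L}_\psi$ with values in $\ell^2(N)$ via the regular representation: its spectral radius (for a suitable normalising H\"older potential) encodes $h_{\mathrm{Gur}}(T_\psi)$, while the trivial-representation twist encodes $h_{\mathrm{Gur}}(T_{\psi^{\mathrm{ab}}})$.

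For the \emph{if} direction, suppose $N$ is amenable. By Hulanicki's theorem the regular representation of $N$ weakly contains the trivial representation, so a F\o lner sequence in $N$ produces almost-invariant unit vectors in $\ell^2(N)$. Combining these with a positive leading eigenfunction for the base transfer operator and invoking transitivity of $T_\psi$ (to propagate approximate invariance over $\Sigma$) yields approximate eigenvectors for $\mathcal{L}_\psi$ whose eigenvalues approach the base spectral radius; standard Ruelle--Perron--Frobenius estimates then translate this into lower bounds on $\#\{x : \sigma^n x = x,\ \psi^n(x)=e\}$ matching the base growth rate, giving $h_{\mathrm{Gur}}(T_\psi)\ge h_{\mathrm{Gur}}(T_{\psi^{\mathrm{ab}}})$. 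For the \emph{only if} direction, if $N$ is non-amenable then by Kesten's theorem the regular representation has no almost-invariant vectors, yielding a strict spectral gap between $\mathcal{L}_\psi$ and its scalar counterpart and hence $h_{\mathrm{Gur}}(T_\psi)<h_{\mathrm{Gur}}(T_{\psi^{\mathrm{ab}}})$; any approximate equality of Gurevi\v{c} entropies would force an almost-invariant vector, contradicting Kesten.

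The main obstacle is the \emph{if} direction in the absence of a symmetry condition on $\psi$ of the kind used in \cite{Stadlbauer13} and \cite{DougallSharp}. Under symmetry $\mathcal{L}_\psi$ is self-adjoint on $\ell^2(N)$, so spectral radius coincides with operator norm and the F\o lner trick produces the required approximate eigenvectors directly via the quadratic form. Without symmetry, $\mathcal{L}_\psi$ is only a bounded non-normal operator and its spectral radius has to be extracted from growth rates of iterates; turning F\o lner-type approximate invariance into genuine spectral-radius estimates for such an operator, and relating this back to counting periodic points in $\Sigma\times G$ through the two-stage cover $\Sigma\times G\to\Sigma\times G^{\mathrm{ab}}\to\Sigma$, is where the genuinely new technical content lies and cannot be read off from the symmetric case treated in the earlier literature.
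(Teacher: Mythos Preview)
Your proposal identifies the right dichotomy but leaves the central step of the ``if'' direction unproved, and the paper's argument is substantively different in both directions.

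\medskip

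\textbf{The ``if'' direction.} You correctly isolate the obstacle: without a symmetry assumption, the F\o lner/Hulanicki argument controls only the spectral radius of $\mathcal L_\psi$, not the Gurevi\v{c} entropy. As the paper itself stresses (section \ref{section:gege}), one may have $P_{\mathrm{Gur}}(f,T_\psi)<\log\mathrm{spr}_{\mathcal H}(\mathcal L_f)$ in the asymmetric case, so a spectral-radius lower bound does \emph{not} translate into a periodic-point lower bound by ``standard Ruelle--Perron--Frobenius estimates''. Your last paragraph concedes this, but the proposal offers no mechanism to close the gap. The paper's solution is entirely different: it does \emph{not} use transfer operators or F\o lner sequences at all. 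Instead it builds, following Stadlbauer \cite{Stadlbauer18}, a family of $\sigma$-finite measures $\nu_{\eta,g}$ on $\Sigma^+\times G$ as weak limits of Patterson--Sullivan type sums, shows they satisfy a quasi-conformality relation (Lemma \ref{almosteig}), and then applies Roblin's trick: the Banach mean $\mathfrak M$ is used to average $g\mapsto\log(\nu_{o,ga}(v)/\nu_{o,g}(v))$ and produce a genuine homomorphism $\chi:G\to\mathbb R$. Since $\chi$ factors through $G^{\mathrm{ab}}$, it vanishes on periodic orbits with $(\psi^{\mathrm{ab}})^n=0$, and the quasi-conformality bound then gives directly
\[
\sum_{\substack{\sigma^n x=x\\(\psi^{\mathrm{ab}})^n(x)=0}} e^{f^n(x)} \le C\,\rho^{\,n+N+1}
\]
with $\rho=e^{P_{\mathrm{Gur}}(f,T_\psi)}$, i.e.\ the required inequality $P_{\mathrm{Gur}}(f,T_{\psi^{\mathrm{ab}}})\le P_{\mathrm{Gur}}(f,T_\psi)$. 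The amenability enters through the existence of $\mathfrak M$, not through almost-invariant vectors.

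\medskip

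\textbf{The ``only if'' direction.} Your reduction to an $N$-extension of $T_{\psi^{\mathrm{ab}}}$ is logically sound in principle, but it forces you to work over a \emph{countable}-state base and to identify $h_{\mathrm{Gur}}(T_{\psi^{\mathrm{ab}}})$ with the log-spectral-radius of the base transfer operator, which again requires justification in the absence of symmetry. The paper avoids this by a cleaner route that keeps the finite-state shift $\sigma:\Sigma^+\to\Sigma^+$ as the base throughout: from \cite{PollicottSharp94} one has $h_{\mathrm{Gur}}(T_{\psi^{\mathrm{ab}}})=P(\langle\xi,\psi^{\mathrm{ab}}\rangle,\sigma)$ for some $\xi\in\mathbb R^a$, so Stadlbauer's spectral-gap result (Proposition \ref{stadlbauer}) applied with the potential $f=\langle\xi,\psi^{\mathrm{ab}}\rangle$ over the \emph{compact} base gives, for $G$ non-amenable,
\[
h_{\mathrm{Gur}}(T_\psi)=P_{\mathrm{Gur}}(\langle\xi,\psi^{\mathrm{ab}}\rangle,T_\psi)\le \log\mathrm{spr}_{\mathcal H}(\mathcal L_{\langle\xi,\psi^{\mathrm{ab}}\rangle})<P(\langle\xi,\psi^{\mathrm{ab}}\rangle,\sigma)=h_{\mathrm{Gur}}(T_{\psi^{\mathrm{ab}}}).
\]
The first equality is immediate because $\psi^n(x)=e$ forces $(\psi^{\mathrm{ab}})^n(x)=0$, so the weight $e^{\langle\xi,(\psi^{\mathrm{ab}})^n(x)\rangle}$ is identically $1$ on the relevant periodic points. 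This sidesteps both the countable-base issue and the need to realise $T_\psi$ explicitly as a cocycle extension over $\Sigma^+\times G^{\mathrm{ab}}$.
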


We now put these results into a broader context. This discussion will focus on more recent viewpoints, and on the role of symmetry, and so neglects to mention the spectral analogues of Brooks \cite{Brooks}, \cite{Brooks85}, whose work was a motivating factor in the recent developments of this field (for instance \cite{Stadlbauer13}, \cite{DougallSharp}, \cite{Dougall}, \cite{CDS18}, \cite{CDST}).

We have already given the definition of an amenable group in terms of the existence of a Banach mean. We give an equivalent criterion of amenability due to Kesten \cite{Kesten}, which is often taken as the definition for amenability. Throughout, let $p$ be a probability measure on $G$, which we will assume to be finitely supported and for the support to generate $G$. We say that $p$ is \emph{symmetric} if $p(g)=p(g^{-1})$ for all $g\in G$.

Kesten's theorem says that, for a \emph{symmetric} probability on $G$, the decay of the probability to return to the identity is subexponential if and only if $G$ is amenable. More precisely, writing $\lambda(p) = \limsup_{n\to\infty} (p^{*n}(e))^{1/n}$, where $p^{*n}$ denotes the $n$th convolution of $p$, we have $\lambda(p)=1$ if and only if $G$ is amenable. This result has an application to counting for normal subgroups of non-elementary Gromov hyperbolic groups. Let $\Gamma$ be a non-elementary Gromov hyperbolic group and $\Gamma^\prime$ a normal subgroup. The uniform measure on the ball of radius $n$ in $\Gamma$ descends to a probability measure $p_n$ on $G=\Gamma/\Gamma^\prime$. It can be shown, for example in \cite{CDS18}, that $\limsup_{n\to\infty}\log \lambda(p_n) \le \delta_{\Gamma^\prime}-\delta_{\Gamma}\le 0$; and so in the case that $G$ is amenable we have equality between the critical exponents $\delta_{\Gamma^\prime}$, $\delta_{\Gamma}$. (We also mention the more general approach of Roblin \cite{Roblin} who uses the Banach mean property to prove the result only for normal subgroups $\Gamma^\prime$ of $\Gamma$ a non-elementary discrete group of isometries of a simply connected negatively curved manifold -- notably there is no compactness assumption on $\Gamma$.)
Because of this avenue of approximating by a sequence of probability measures $p_n$, the community has sometimes referred to statements ``$G$ amenable implies $\cdots$ " as ``the easy direction". However, it is clear that the aforementioned application crucially used the existence of a natural family of symmetric probability measures. As we will see from the literature reviewed in subsection \ref{subsection:windingcycles}, extending such orbital counting results beyond the inherent symmetry of the isometric actions to Anosov flows is highly non-trivial.

The case that $G$ is non-amenable is more robust in the absence of symmetry. In the case of random walks, it can happen that $G$ is amenable, and $\lambda(p)<1$. The Kesten criterion was generalised by Day \cite{Day} to a criterion on the $\ell^2(G)$ spectral radius $\mathrm{spr}(M_p)$ of the random walk operator $M_p f(x)=\sum_g p(g)f(xg)$. In this way, $G$ is amenable if and only if $\mathrm{spr}(M_p)=1$. Observe that $\lambda(p)\le \mathrm{spr}(M_p)$ and so one direction of the Kesten result is contained in this.
In the setting of subshifts of finite type, it is natural to consider the transfer operator $\mathcal{L}_0$ (for a full exposition see section \ref{section:gege}). Stadlbauer \cite{Stadlbauer13} showed that $\mathrm{spr}(\mathcal{L}_0)=\exp(h(\sigma))$ if $G$ amenable; and Jaerisch \cite{Jaerisch} showed the converse. Analogous to the case of
random walks, the growth quantity we wish to estimate for subshifts of finite type satisfies 
$\exp(h_{\mathrm{Gur}}(T_\psi))\le \mathrm{spr}(\mathcal{L}_0)$.

We conclude the introduction by outlining the contents of the rest of the paper.
In section \ref{anosovflows} we introduce Anosov flows and their lift to covers. In particular, we give an 
account of results about growth of periodic orbits for abelian covers.
In section \ref{section:ssftge}, we discuss subshifts of finite type and introduce the notion of Gurevi\v{c} 
pressure.
In section \ref{section:gege}, we discuss Gurevi\v{c} entropy for group extensions of subshifts of finite type
and prove one direction in Theorem \ref{intro-gurevicentropy}.
The proof of Theorem \ref{intro-gurevicentropy} is completed in section \ref{amenabilityandpressure},
which contains the key to obtaining results in the absence of symmetry.
In section \ref{symbolicdynamics}, we return to Anosov flows and discuss how they 
may be coded in terms of subshifts of finite type and section \ref{coversandgroupextensions} extends this to covers and group extensions.
In section \ref{proofofmaintheorem}, we complete the proof of Theorem \ref{intro-main}. Finally,
in section \ref{sectiononequidistribution}, we state and proof a number of equidistribution results.

\section{Anosov Flows} \label{anosovflows}

\subsection{Anosov flows, periodic orbits and pressure}
Let $\phi^t:M\to M$ be an Anosov flow, 
i.e. that the tangent bundle has a continuous $D\phi$-invariant splitting $TM = E^0 \oplus E^s \oplus E^u$,
where $E^0$ is the one-dimensional bundle tangent to the flow and where there exist constants $C,\lambda >0$ such that
\begin{enumerate}
\item
$\|D\phi^tv\| \le Ce^{-\lambda t} \|v\|$, for all $v \in E^s$ and $t >0$;
\item
$\|D\phi^{-t}v\| \le Ce^{-\lambda t} \|v\|$, for all $v \in E^u$ and $t>0$.
\end{enumerate}
In addition, we assume that $\phi^t : M \to M$ is transitive and weak mixing.

For some intuition behind Anosov flows, one should note the following constructions (and we will make use of them soon).
An $\epsilon$-pseudo-orbit for $\phi^t$ is a path $\tau:I\to X$ (where $I$ is an interval)
so that for all $t,t+\delta\in I$, $|\delta|<1$,
$$
d(\tau(t+\delta),\phi^\delta(\tau(t)))<\epsilon.
$$
We say that $\tau:\mathbb{R}\to X$ is a periodic $\epsilon$-pseudo-orbit if it is a periodic map of $\mathbb{R}$, and satisfies $d(\tau(t+\delta),\phi^\delta(\tau(t)))<\epsilon$ for all $t,t+\delta\in \mathbb{R}$, $|\delta|<1$.

\begin{lemma}[Anosov Closing Lemma, \cite{FH}, Theorem 5.3.10]\label{anosovclosinglemma}
Let $\Lambda$ be a hyperbolic set for a flow $\phi^t$. Then there exists a neighbourhood $U$ of $\Lambda$ and numbers $\epsilon_0,L>0$ such that for $\epsilon\le \epsilon_0$, any periodic $\epsilon$-pseudo-orbit is $L\epsilon$-shadowed by a unique periodic orbit for $\phi^t$.
\end{lemma}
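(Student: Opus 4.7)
My plan is to prove the closing lemma by a shadowing argument, constructing the desired periodic orbit as a fixed point of a contraction map on candidate shadowing sequences. The key observation is that the periodicity of the pseudo-orbit forces the shadowing orbit to be periodic by uniqueness of the fixed point, so once shadowing is set up the ``closing'' is automatic. The geometric input is the local product structure of the hyperbolic set $\Lambda$; the analytic input is the Banach fixed point theorem, applied in an adapted (Lyapunov) metric.

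First I would discretize by sampling $\tau$ at integer times, producing a periodic sequence $x_n = \tau(n)$ of some period $N$ close to $T$, which is a periodic $O(\epsilon)$-pseudo-orbit for the time-one map $f = \phi^1$. Working on a codimension-one transverse section at $\tau(0)$ eliminates the flow direction, reducing the problem to finding an $N$-periodic point of the induced hyperbolic diffeomorphism near $x_0$. In the adapted metric, $\|Df|_{E^s}\| \le \kappa$ and $\|Df^{-1}|_{E^u}\| \le \kappa$ uniformly on $\Lambda$ for some $\kappa < 1$. For each $n$, I would use local coordinates at $x_n$ built from $\exp_{x_n}$ applied to the hyperbolic splitting $E^s_{x_n} \oplus E^u_{x_n}$; in these coordinates $f$ is its linearization plus a quadratically small remainder.

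Now set up the contraction. Let $\mathcal{B}$ be the Banach space of bi-infinite sequences $(\xi_n)$ with $\xi_n \in E^s_{x_n} \oplus E^u_{x_n}$ and $\sup_n \|\xi_n\| \le r$, under the $\ell^\infty$ norm. Writing $F_n(\xi) = \exp_{x_{n+1}}^{-1}(f(\exp_{x_n}(\xi)))$ so that an honest $f$-orbit corresponds to $\xi_{n+1} = F_n(\xi_n)$, decompose $\xi_n = \xi_n^s + \xi_n^u$ and define $\mathcal{T}: \mathcal{B} \to \mathcal{B}$ componentwise by
\[
(\mathcal{T}\xi)_n^s = \mathrm{proj}^s F_{n-1}(\xi_{n-1}), \qquad (\mathcal{T}\xi)_n^u = \mathrm{proj}^u F_n^{-1}(\xi_{n+1}).
\]
The hyperbolic estimates make $\mathcal{T}$ a $\kappa$-contraction on $\mathcal{B}$, while the pseudo-orbit condition gives $\|\mathcal{T}(0)\|_\infty = O(\epsilon)$, so for $r = L_1 \epsilon$ with $L_1$ large enough $\mathcal{T}$ preserves the ball of radius $r$. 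Banach's theorem yields a unique fixed sequence $\xi^*$, and $p_n := \exp_{x_n} \xi^*_n$ is a true $f$-orbit with $d(p_n, x_n) \le L\epsilon$.

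Periodicity is free: since $(x_n)$ is $N$-periodic, the shifted sequence $(\xi^*_{n+N})$ is another fixed point of $\mathcal{T}$, hence equals $\xi^*$, so $p_0$ is an $N$-periodic point of $f$ and gives a closed $\phi^t$-orbit of period $T + O(\epsilon)$ passing $L\epsilon$-close to $\tau$. The main obstacle I expect is technical rather than conceptual: verifying that the nonlinear remainders in $F_n$ do not destroy the contraction at scale $r = O(\epsilon)$, that the adapted metric really delivers $\kappa < 1$ uniformly on $\Lambda$, and --- the most delicate point --- that the passage between the flow and its time-one map only costs an $O(\epsilon)$ error in both time and space, so that the final constant $L$ depends only on $C$ and $\lambda$ and not on $T$ or on the particular pseudo-orbit. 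The cleanest way to package all of this is probably to perform the entire nonlinear analysis on the Poincar\'e section at $\tau(0)$, where the flow direction has been quotiented out.
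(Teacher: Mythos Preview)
The paper does not prove this lemma: it is quoted as a known result from the literature, with the citation \cite{FH}, Theorem 5.3.10, and is used as a black box in the proof of Lemma~\ref{gurevich}. There is therefore no ``paper's own proof'' to compare against.

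That said, your outline is the standard route to the closing lemma and is essentially correct. The scheme of discretizing to a periodic pseudo-orbit for the time-one map, setting up the graph-transform operator $\mathcal{T}$ on $\ell^\infty$ sequences in the hyperbolic splitting, and invoking the Banach fixed point theorem is exactly how shadowing is usually proved; the observation that periodicity of the shadowing orbit follows from uniqueness of the fixed point under the shift is also the standard trick. One point to tighten: you mention both reducing to a Poincar\'e section at $\tau(0)$ and working with the time-one map $f=\phi^1$, but these are alternative ways of handling the flow direction, not steps to be combined. If you work with $f=\phi^1$ directly you must allow a time reparametrisation when reassembling the flow orbit (this is where the $O(\epsilon)$ error in the period comes from); if instead you work entirely on a transverse section you get a genuine first-return map and the period is determined by the return time. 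Either route is fine, but you should commit to one.
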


Write $\mathcal P(\phi)$ for the set of periodic orbits of $\phi$ and, for $\gamma \in \mathcal P(\phi)$,
write $l(\gamma)$ for its period. The limit
\[
h= \lim_{T\to\infty} \frac{1}{T}\log \# \left\{\gamma \in \mathcal P(\phi): l(\gamma)\le T \right\}
\]
exists and is positive, and $h$ is also equal to the topological entropy of $\phi$. We also have the variational
principle 
\[
h=\sup \left\{ h_\phi(\mu): \mu\in \mathcal{M}(\phi)\right\},
\]
where $h_\phi(\mu)$ denotes the measure-theoretic entropy of $\phi$ with respect to $\mu$, 
and $\mathcal{M}(\phi)$ is the collection of $\phi$-invariant Borel probability measures on $M$.
The supremum is attained at a unique measure $\mu_0$, called the measure of maximal entropy for $\phi$.
We also have a weighted version of this set-up. For a continuous function $F : M \to \mathbb R$, 
we can define the pressure
$P(F,\phi)$ by
\[
P(F,\phi) = \lim_{T \to \infty} \frac{1}{T} \log \sum_{\substack{\gamma \in \mathcal P(\phi):
\\ l(\gamma) \le T}} \exp \int_\gamma F
\, dt,
\]
where 
\[
\int_\gamma F := \int_0^{l(\gamma)} F(\phi_t x_\gamma)
\, dt
\]
with
$x_\gamma \in \gamma$.
In this case, we also have a variational principle,
\[
P(F,\phi) = \sup\left\{h_\phi(\mu) + \int_M  F \, d\mu \hbox{ : } 
\mu \in \mathcal M(\phi)\right\}.
\]
The supremum is attained by a unique measure $\mu_F$ which is ergodic; we call $\mu_F$ the equilibrium state for $F$. In section \ref{sectiononequidistribution}, we will use the following lemma (see Theorem 8.2
and Theorem 9.12
of \cite{Walters}).

\begin{lemma} \label{entropyusc}
The map $\mathcal M(\phi) \to \mathbb R : \mu \mapsto h_\phi(\mu)$ is upper semi-continuous
and
\[
h_\phi(\mu) = \inf \left\{P(F,\phi) - \int F \, d\mu \hbox{ : } F \in C(M,\mathbb R)\right\}.
\]
\end{lemma}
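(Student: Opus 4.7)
My plan is to reduce both statements to their analogues for a single homeomorphism, where the corresponding results are Theorems 8.2 and 9.12 of Walters' book, and then transfer them to the flow via a symbolic model.

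For the upper semi-continuity statement, I would use the fact that a transitive Anosov flow admits a Bowen--Ratner symbolic coding as a suspension of a mixing subshift of finite type by a H\"older roof function $r$, which is developed in detail in section \ref{symbolicdynamics} of the paper. The assignment from $\sigma$-invariant probabilities $m$ on the base to $\phi$-invariant probabilities $\mu_m$ on $M$ (given by integrating against the suspension flow and normalising by $\int r\,dm$) is continuous in the weak-$*$ topology, and surjective by a standard disintegration of any $\mu \in \mathcal M(\phi)$ along a Markov section. Abramov's formula $h_\phi(\mu_m) = h_\sigma(m)/\int r\,dm$ then reduces upper semi-continuity on $\mathcal M(\phi)$ to upper semi-continuity of the entropy map on $\sigma$-invariant measures, which holds because subshifts of finite type are expansive (Walters, Theorem 8.2).

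For the dual variational characterisation, the inequality $h_\phi(\mu) \leq P(F,\phi)-\int F\,d\mu$ is immediate from the variational principle stated just above. For the reverse, I would invoke convex duality: the map $F \mapsto P(F,\phi)$ on $C(M,\mathbb R)$ is convex, $1$-Lipschitz and monotone, and its Legendre--Fenchel conjugate on the dual space of signed Borel measures takes the value $-h_\phi(\mu)$ when $\mu \in \mathcal M(\phi)$ and $+\infty$ otherwise. Upper semi-continuity from the first part guarantees that this conjugate is itself lower semi-continuous and convex, so Fenchel--Moreau biduality returns exactly the infimum formula. Concretely, if $h_\phi(\mu) < c$, one applies Hahn--Banach to separate $(\mu,c)$ from the closed convex hypograph $\{(\nu,s):\nu\in \mathcal M(\phi),\ s \leq h_\phi(\nu)\}$ in $C(M,\mathbb R)^* \times \mathbb R$, producing a continuous function $F$ for which $P(F,\phi) - \int F\,d\mu < c$.

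The step I expect to require the most care is the transfer in the first part: one must verify that the symbolic coding captures \emph{every} $\phi$-invariant measure (not just those supported on the coded set of full measure) with the correct entropy, so that upper semi-continuity on $\mathcal M(\sigma)$ really implies upper semi-continuity on all of $\mathcal M(\phi)$. Once this is established, the second part is essentially a formal convex-analytic consequence.
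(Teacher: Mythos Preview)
The paper does not actually prove this lemma: it simply records the result with a citation to Theorems 8.2 and 9.12 of Walters, which are stated for a single homeomorphism. Your proposal is a correct elaboration of how to transfer those discrete-time theorems to the flow setting, and the convex-duality argument you give for the infimum formula is exactly the content of Walters' Theorem 9.12 once upper semi-continuity is known.

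That said, your route to upper semi-continuity via the symbolic model is more elaborate than necessary and, as you yourself flag, requires care at the transfer step. The issue you raise can be handled: the semi-conjugacy $\theta:\Sigma^r\to M$ is bounded-to-one, and bounded-to-one factor maps preserve measure-theoretic entropy, so every $\mu\in\mathcal M(\phi)$ lifts to some $m\in\mathcal M(\sigma_r)$ with $h_{\sigma_r}(m)=h_\phi(\mu)$; combining this with compactness of $\mathcal M(\sigma_r)$ and Abramov's formula gives the result. A more direct argument, however, avoids the coding altogether: transitive Anosov flows are expansive in the Bowen--Walters sense, and for expansive flows the entropy map is upper semi-continuous by the flow analogue of Walters' Theorem 8.2. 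This sidesteps the lifting subtlety entirely and is presumably what the citation in the paper is pointing to.
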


For H\"older continuous functions
$F,G : M \to \mathbb R$, the function
$\mathbb R \to \mathbb R : t \mapsto P(F+tG)$ is real-analytic and
\begin{equation}
\frac{dP(F+tG)}{dt}\bigg|_{t=0} = \int_M  G \, d\mu_F.
\end{equation}

\subsection{Covers}
Suppose that 
$X$ is a regular cover of $M$ with covering group $G$.
Let $\phi_X^t:X\to X$ be the lift of $\phi$ to $X$.
In particular, the $G$ action commutes with $\phi_X^t$. 
Write $\mathcal P(\phi_X)$ for the set of periodic orbits of $\phi_X$ and, as above, write $l(\gamma)$ for the 
period of the periodic orbit $\gamma$.
Fix an open set $W$ in $M$ whose closure is compact. Write 
\begin{equation} \label{def of X counting function}
\pi_X(T,W) = \#\left\{ \gamma \in \mathcal P(\phi_X) : l(\gamma) \le T 
\ \mathrm{and} \ \gamma\cap W \ne \varnothing\right\}
\end{equation}
and
\begin{equation} \label{def of h(X)}
h(X)=\limsup_{T\to\infty}\frac{1}{T}\log\pi_X(T,W)
\end{equation}
 for the exponential growth rate of $\pi_X(T,W)$.

We also introduce another counting function associated to the cover $X$. 
Write $\Pi_X(T)$ for 
the cardinality of the set of periodic orbits for $\phi$ in $M$ of period at most $T$, and 
which lift to a periodic orbit for $\phi_X$.

We have the following lemma.

\begin{lemma} \label{gurevich}
The value of $h(X)$ is independent of the choice of $W$.

In addition,
$$
h(X) = \limsup_{T\to\infty}\frac{1}{T}\log \Pi_X(T).
$$
\end{lemma}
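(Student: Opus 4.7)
For part 1, the strategy is to use transitivity of $\phi_X$ together with the Anosov Closing Lemma (Lemma~\ref{anosovclosinglemma}). Given open, relatively compact $W_1, W_2 \subset X$, transitivity of $\phi_X$ supplies $\phi_X$-orbit segments of bounded length $T_0$ joining a neighbourhood inside $W_1$ to $W_2$ and vice versa. For each $\gamma \in \mathcal P(\phi_X)$ with $\gamma \cap W_1 \ne \varnothing$ and $l(\gamma) \le T$, I would concatenate $\gamma$ with these connecting segments to obtain a closed $\epsilon$-pseudo-orbit of length at most $T + 2T_0 + 1$ that passes through $W_2$. Applying Lemma~\ref{anosovclosinglemma}, which carries over to the locally hyperbolic structure of $\phi_X$ on $X$, yields a periodic orbit $\tilde\gamma$ of $\phi_X$ that $L\epsilon$-shadows the pseudo-orbit; shrinking $W_2$ slightly at the outset ensures $\tilde\gamma \cap W_2 \ne \varnothing$. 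The local product structure of the hyperbolic dynamics makes the assignment $\gamma \mapsto \tilde\gamma$ at most boundedly many-to-one, so $\pi_X(T, W_1) \le C\,\pi_X(T + C', W_2)$, and symmetry gives the reverse estimate.

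For part 2 I would prove the two inequalities using different choices of $W$, each permitted by part 1. For the lower bound, choose $W$ with $p(W) = M$ (possible since $M$ is compact). Then for each $\bar\gamma$ counted by $\Pi_X(T)$---a periodic orbit of trivial holonomy in $G$ with $l(\bar\gamma) \le T$---lifting any point of $\bar\gamma$ to a preimage in $W$ yields a closed $\phi_X$-lift of period $l(\bar\gamma) \le T$ meeting $W$. Distinct $\bar\gamma$ give distinct such lifts, so $\pi_X(T,W) \ge \Pi_X(T)$ and hence $h(X) \ge \limsup_{T \to \infty} T^{-1} \log \Pi_X(T)$.

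For the upper bound, choose $W$ instead to be a small open set on which $p$ is injective, so the $G$-translates $\{gW\}_{g \in G}$ are pairwise disjoint. Each $\gamma \in \pi_X(T,W)$ projects to $\bar\gamma = p(\gamma) \in \mathcal P(\phi)$ whose holonomy has finite order $n_\gamma$, with $l(\gamma) = n_\gamma\,l(\bar\gamma) \le T$; in particular $\bar\gamma \in \Pi_X(T)$. For a fixed lift $\gamma_0$ of $\bar\gamma$, the translates $g\gamma_0$ meeting $W$ are in bijection with the connected arcs of $\bar\gamma \cap p(W)$, whose number is $O(l(\bar\gamma)) = O(T)$ for a flow-box choice of $W$. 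The contribution from $n_\gamma = 1$ is therefore at most $CT\,\Pi_X(T)$, while for $n_\gamma \ge 2$ it is at most $CT\,\#\{\bar\gamma : l(\bar\gamma) \le T/n\} \le CT\,e^{(h+\epsilon)T/n}$; the $n_\gamma \ge 2$ sum is dominated by its $n=2$ term $\le CT\,e^{(h+\epsilon)T/2}$, which is asymptotically absorbed by the leading contribution once the lower bound is inserted. This gives $h(X) \le \limsup_{T \to \infty} T^{-1} \log \Pi_X(T)$.

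The main obstacle is the multiplicity bound in the upper bound---the number of $G$-translates $g\gamma_0$ meeting $W$. A naive estimate via the volume growth of $X$ produces an exponential factor of rate equal to the volume entropy of the cover, which is too weak when $G$ is non-amenable. The resolution, made possible by part 1, is to shrink $W$ to a small evenly covered flow-box; the multiplicity is then an arc-count on $\bar\gamma$ that grows only linearly in $l(\bar\gamma)$.
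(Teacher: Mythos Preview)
Your strategy for Part~1 and for the lower bound in Part~2 is essentially the paper's: concatenate with transit segments supplied by transitivity and apply the Anosov Closing Lemma for the first, and take $W$ to be a fundamental domain for the second. (The paper projects to $M$ before applying the closing lemma and then lifts back using simple connectivity of small balls; you invoke the closing lemma directly on $X$. Both work, since the hyperbolic constants are local and lift.)

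There is, however, a real gap in your upper bound for Part~2. You project a $\phi_X$-periodic orbit $\gamma$ to the \emph{prime} $\phi$-orbit $\bar\gamma$, record the order $n_\gamma$ of its holonomy, and then split into $n_\gamma=1$ and $n_\gamma\ge 2$. For $n_\gamma\ge 2$ the prime orbit $\bar\gamma$ does \emph{not} lie in $\Pi_X(T)$ (only $\bar\gamma^{\,n_\gamma}$ does), and your crude bound of that contribution by $C T\,e^{(h+\epsilon)T/2}$ only yields $h(X)\le\max\bigl(h^*,\,h/2\bigr)$ with $h^*:=\limsup T^{-1}\log\Pi_X(T)$. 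When $h^*<h/2$ --- which can certainly happen, e.g.\ for a $\mathbb Z$-cover with a strongly biased winding cycle --- this does not give $h(X)\le h^*$, and the assertion that this term is ``absorbed once the lower bound is inserted'' fails. The fix is exactly what the paper does: project $\gamma$ not to the prime orbit but to the (possibly non-prime) $\phi$-periodic orbit of period $l(\gamma)$, which always lies in $\Pi_X(T)$. Your own arc-counting argument (or the paper's minimal-return-time estimate) then bounds the multiplicity of this map by $O(T)$, giving $\pi_X(T,W)\le C T\,\Pi_X(T)$ directly, with no case split and no appeal to $h$.
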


\begin{proof}
Let us write $h(X,W)=\limsup_{T\to\infty}\frac{1}{T}\log\pi_X(T,W)$ to emphasise the choice of $W$.
We begin by showing that $h(X,W)$ is independent of the choice of $W$.

First, we observe that it is sufficient to consider ``small'' $W$. To see this, lift our preferred metric on $M$ to $X$. Since the closure of $W$ is compact, it has bounded diameter $R$. For every $\epsilon>0$ we can find a finite family of open sets of diameter less than $\epsilon$ that cover $W$. Then it is easy to see that there is $W_\epsilon$ of diameter less than $\epsilon$ with $h(X,W)\le h(X,W_\epsilon)$. On the other hand, since $W$ is open it must contain an open set $W_\delta$ of
diameter less that $\delta$ for all sufficiently small $\delta$. In this way we have $h(X,W_\delta)\le h(X,W)\le h(X,W_\epsilon)$.

Let $W_1,W_2$ be two sets of sufficiently small diameter 
$\mathrm{diam}(W_1),\mathrm{diam}(W_2)<\epsilon_0$, where $\epsilon_0$ is given in by the 
Anosov Closing Lemma 
(Lemma \ref{anosovclosinglemma}). We will also assume that $\epsilon_0$ is sufficiently small that an 
open ball in $M$ of this diameter is simply connected. 

We want to show that $h(X,W_1)\le h(X,W_2)$.

As $W_2$ is open it contains a ball $B_2$ of radius some $\delta$ and a $\lambda\delta$ sub-ball, where 
$\lambda$ is chosen so that $\lambda\delta + L\lambda\delta<\delta$
and $L$ is also given by the Anosov Closing Lemma. Write $\delta_2=\lambda\delta$
(and note that $\delta_2 \le \epsilon_0$). Cover $W_1$ by 
finitely many $\delta_2$ balls $B_1^i$ (where $i$ runs over some finite indexing set).
 For each $i$, fix an orbit $c_i^1$ from $B_1^i\cap W_1$ to $B_2$. Fix orbits $c_i^2$ from $W_2$ 
 to $B_1^i\cap W_1$. This is possible by transitivity.
Let $\tau_1$ be a periodic $\phi_X$-orbit intersecting $W_1$ and write $T_1$ for its period. There is some $i$ so that $\tau_1$ is $\delta_2$ close to $c_i^1$ and $c_i^2$. Then $c_i^1\tau_1c_i^2$ is a 
$\delta_2$-pseudo-orbit through $B_2$.

Now consider the projection of $c_i^1\tau_1c_i^2$ to $M$. By the Anosov Closing Lemma,
this is $L\delta_2$-shadowed by a
unique periodic orbit $\gamma_0$. 
Since $L \delta_2 < \delta_2 < \epsilon_2$, $\gamma$ lifts to a $\phi_X$-periodic orbit $\gamma$ on $X$
which $L\delta_2$ shadows $c_i^1\tau_1c_i^2$.
Unpicking the value of $\delta_2$, gives that $\gamma$ passes within distance $L\lambda\delta$ of the 
$\lambda\delta$ sub-ball. Since $\lambda\delta + L\lambda\delta<\delta$, we conclude that $\gamma$ passes through $B_2$, and therefore $W_2$. In addition, it is clear (for $T_1$ sufficiently large) that the period $T_{\gamma}$ of $\gamma$ is bounded by $|T_{\gamma}- T_1| \le c^1_{i} + c^2_{i}+L\delta_2$.

We have thus created a map from the set of $\phi_X$-periodic orbits intersecting $W_1$
to the set of $\phi_X$-periodic orbits intersecting $W_2$-periodic orbits, that distorts periods by an
additive constant. Observe that when the period of $\tau_1$ is sufficiently large, the mapping is an injection (otherwise we would have distinct periodic orbits shadowing each other).

We have concluded that $h(X,W_1)\le h(X,W_2)$. Since $W_1,W_2$ were arbitrary, this gives $h(X,W_1)=h(X,W_2)$ for all $W_1,W_2$, as required.

We finish be showing the final part of the lemma. In general, it is clear that
$$
h(X)\ge \limsup_{T\to\infty}\frac{1}{T}\log N_{X}(T)
$$
(since we can choose $W$ to be a fundamental domain for the covering and so each $\gamma$ counted by $\Pi_X(T)$ will have at least one left intersecting $W$).
To see the reverse inequality, we simply choose $W$ to be a small ball in $Y$. 
Let
\[
C = \inf\{t >0 \hbox{ : } \exists x \in W \ \mathrm{such} \  \mathrm{that} \ \phi_X^tx \in GW \setminus W\},
\]
then $C>0$. 

Let $\gamma$ be a $\phi_X$-periodic orbit of period $\le T$ intersecting $W$, and let $\gamma_0$ be its projection to $M$. Fixing $\gamma_0$, there are at most $T/C$ such $\gamma$. In this way we deduce that $\Pi_{X}(T)\ge (C/T)\pi_X(T,W)$, and so $h(X)=\limsup_{T\to\infty} T^{-1} \log \Pi_{X}(T)$.

\end{proof}
 
\subsection{Abelian covers, relative winding cycles and counting} \label{subsection:windingcycles}

We will now specialise to abelian covers of $M$; to emphasise this distinction, we shall denote the covering space by $Y$.
Let $Y$ be a regular cover of $M$, with {\it abelian} covering group $G$. (Since $G$ is abelian, we will use $0$ to denote its identity element.)
We begin by defining winding cycles relative to this cover. First note that we may write
$G = A \oplus F$, where $A$ is a free abelian group of rank $a \ge 0$ and $F$ is the finite 
subgroup of torsion elements. 
If $a=0$ then the lifted flow $\phi_Y^t : Y \to Y$ is also an Anosov flow and it has topological entropy equal to 
$h$, so there is nothing interesting to say in this case. We therefore suppose that $a \ge 1$. It is best to view $A$ as a free 
$\mathbb Z$-module. 

The cover $Y$ is itself covered by the universal abelian cover $\overline Y$ of $M$ and there is a natural surjective homomorphism from $H_1(M,\mathbb Z)$ to $G$ which (by factoring out $F$) induces a surjective 
module homomorphism
$\alpha : H \to A$, where
$H =  H_1(M,\mathbb Z)/\mathrm{torsion}$
is a free $\mathbb Z$-module of rank $b \geq a$.
 (Of course, $H$ is also identified as an integral lattice in 
$H_1(M,\mathbb R)$.) We may then write
$H = A \oplus \ker \alpha$.

Let $c_1,\ldots, c_a$ be a basis for $A$ (which we may identify with elements of $H_1(M,\mathbb R)$)
and (if $b >a$) let $c_{a+1},\ldots, c_b \in \ker \alpha$ extend this to a basis for $H$.
Then $\{c_1,\ldots,c_a\}$ spans a vector subspace of $H_1(M,\mathbb Z)$ which may be identified
with $A \otimes_{\mathbb Z} \mathbb R$. We shall denote this vector space by $\Gamma(M,Y)$. 

Now let $\omega_1,\ldots,\omega_b$ be closed $1$-forms on $M$ representing
cohomology classes $[\omega_1],\ldots,[\omega_b]$ in $H^1(M,\mathbb R)$ which are dual to
$c_1,\ldots,c_b$, i.e.
\[
\langle [\omega_i], c_j \rangle := \int_{c_j} \omega_i = \delta_{ij}.
\]
(Here, $\delta_{ij}$ denotes the Kronecker symbol.) We can identify the span of
$\{[\omega_1],\ldots,[\omega_a]\}$ with the dual space of $\Gamma(M,Y)$, which we denote by
$\Gamma(M,Y)^*$. (If $Y = \overline Y$ then
$\Gamma(M,Y) = H_1(M,\mathbb R)$ and $\Gamma(M,Y)^* = H^1(M,\mathbb R)$.)

Now consider a measure $\mu \in \mathcal M(\phi)$. We define an element $\Phi^Y_\mu \in 
\Gamma(M,Y)$,
called the winding cycle of $\mu$ relative to the cover $Y$, by its action on the dual space
$\Gamma(M,Y)^*$.
For $[\omega_i]$, $i=1,\ldots,a$, we define
\[
\langle \Phi_\mu^Y, [\omega_i] \rangle = \int_M \omega_i(\mathcal X_\phi) \, d\mu
\]
and extended by linearity, where $\mathcal X_\phi$ is the vector field generating $\phi$. 
(If $df$ is an exact $1$-form then $df(\mathcal X_\phi)$ is the derivative of $f$ in the flow direction,
i.e. $df(\mathcal X_\phi)(x) = \lim_{t \to 0+} t^{-1}(f(\phi^tx)-f(x))$. Hence, since $\mu$ is $\phi$-invariant,
$\int_M df(\mathcal X_\phi) \, d\mu =0$, so $\Phi_\mu^Y$ is well-defined.)

Associated to each periodic orbit $\gamma \in \mathcal P(\phi)$, there is an element $[\gamma] \in G$,
defined as follows. Let $\gamma = \{\phi^tx \hbox{ : } 0 \le t \le l(\gamma)\}$ and
let $\widetilde x$ be a lift of $x$ to $Y$. Then $\phi_Y^{l(\gamma)} \widetilde x = g\widetilde x$, for some 
$g \in G$. Since $G$ is abelian, $g$ is independent of the choice of lift and we define $[\gamma]=g$.
Notice that $\gamma$ lifts to a periodic orbit if and only if $[\gamma]=0$. 
We say that $\phi^t : M \to M$ is {\it $Y$-full} if 
$\{[\gamma] \hbox{ : } \gamma \in \mathcal P(\phi)\} = G$. (This generalises the notion of homologically full, introduced in \cite{Sharp93}, for the case where $Y$ is the universal abelian cover of $M$.) We will show in section \ref{symbolicdynamics} that if $\phi_Y^t : Y \to Y$ is transitive then $\phi$ is $Y$-full.

The counting function $\Pi_Y(T)$ introduced above may be written as
 $\Pi_Y(T) = \#\{\gamma \in \mathcal P(\phi) \hbox{ : } [\gamma]=0\}$.
The following result was proved in \cite{Sharp93} in the case where $Y$ is the universal abelian cover and
$G = H_1(M,\mathbb Z)$ but the proof immediately extends to arbitrary abelian covers. The results in 
\cite{Sharp93} are also phrased in terms of {\it prime} periodic orbits but it is easy to see that the number of non-prime periodic $\phi$-orbits that left to a periodic orbit on $Y$ is of order
$O(T e^{h(Y)T/2})$ and so does not affect the asymptotic (see Lemma \ref{sameabscissa} below).
 
\begin{proposition} [Sharp \cite{Sharp93}] \label{asymptotics_abeliancover} 
The following statements are equivalent:
\begin{enumerate}
\item[\rm{(i)}]
the set $\{[\gamma]_{\mathrm{TF}} \hbox{ : } \gamma \in \mathcal P(\phi)\}$ is not contained in a closed 
half-space of $\mathbb R^a$,
where $[\gamma]_{\mathrm{TF}} \in \mathbb R^a$ is the torsion-free part of $[\gamma]$;
\item[\rm{(ii)}]
$\phi : M \to M$ is $Y$-full; 
\item[\rm{(iii)}] 
there exist a constant $C>0$ such that
\[
\Pi_Y(T) \sim C\frac{e^{h(Y)T}}{T^{1+a/2}}, \ \mathrm{as} \ T \to \infty.
\]
\end{enumerate}
Furthermore, 
\[
h(Y) = \sup\left\{h(\mu) \hbox{ : } \mu \in \mathcal M(\phi), \ \Phi_\mu^Y=0\right\}.
\]
\end{proposition}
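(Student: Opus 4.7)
The overall plan is to reduce to symbolic dynamics, decompose the constraint $[\gamma]=0$ by Pontryagin duality on the abelian group $G=A\oplus F$, and analyse the resulting twisted Ruelle transfer operators via analytic perturbation to obtain both the variational formula and the $T^{-(1+a/2)}$ prefactor.

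The equivalence (i) $\Leftrightarrow$ (ii) is elementary. Concatenation of pseudo-orbits via the Anosov Closing Lemma makes $I:=\{[\gamma]:\gamma\in\mathcal P(\phi)\}$ a sub-semigroup of $G$. Its torsion-free image $I_{\mathrm{TF}}\subset\mathbb Z^a$ fails to lie in a closed half-space of $\mathbb R^a$ exactly when its convex cone fills $\mathbb R^a$, and for a sub-semigroup of $\mathbb Z^a$ this forces $I_{\mathrm{TF}}=\mathbb Z^a$; a separate transitivity argument handles the finite torsion part $F$.

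For (iii) and the variational formula, I would take a Bowen--Ratner Markov section presenting $\phi$ as the suspension of a mixing subshift of finite type $\sigma:\Sigma\to\Sigma$ over a H\"older roof $r$, together with a H\"older function $\psi:\Sigma\to G$ with $\psi^n(x)=[\gamma]$ whenever $\sigma^n x=x$ codes the closed orbit $\gamma$. Pontryagin duality on $G$ gives
\[
\mathbf 1\{\psi^n(x)=0\}=\frac{1}{|F|}\sum_{\chi\in\widehat F}\chi(\psi_F^n(x))\int_{[0,1]^a} e^{2\pi i\langle\xi,\psi_{\mathrm{TF}}^n(x)\rangle}\,d\xi,
\]
so that $\Pi_Y(T)$ becomes an integral over the character torus of traces of twisted transfer operators $\mathcal L_{(\chi,\xi)}$. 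Ruelle--Perron--Frobenius together with analytic perturbation theory yields a simple maximal eigenvalue $\lambda(\chi,\xi)$ near $(\chi,\xi)=(1,0)$, with $\log\lambda(1,\xi)$ equal to the flow pressure $P(F_\xi)$ of $F_\xi:=\sum_{i=1}^a\xi_i\,\omega_i(\mathcal X_\phi)$. Condition (i) makes $\xi\mapsto\log\lambda(1,\xi)$ strictly convex and coercive, with a unique minimiser $\xi_0$ at which the equilibrium state $\mu_{\xi_0}$ satisfies $\Phi^Y_{\mu_{\xi_0}}=0$; Lagrange duality then identifies $h(Y)=P(F_{\xi_0})=\sup\{h_\phi(\mu):\Phi^Y_\mu=0\}$, since any $\mu$ with $\Phi^Y_\mu=0$ has $h_\phi(\mu)=h_\phi(\mu)+\int F_{\xi_0}\,d\mu\le P(F_{\xi_0})$, while $\mu_{\xi_0}$ realises equality.

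The sharp asymptotic then follows from a local limit theorem. Inserting the spectral expansion of $\mathcal L_{(\chi,\xi)}$ into the character integral and applying Laplace's method around $\xi_0$ produces a Gaussian integral in $a$ variables yielding the factor $T^{-a/2}$, while passing from the symbolic count to the flow count contributes the familiar Parry--Pollicott factor $T^{-1}$ via a Tauberian argument. The main obstacle is uniform control of the non-principal spectrum of $\mathcal L_{(\chi,\xi)}$ across the compact character torus: one must rule out resonances by establishing the strict inequality $|\lambda(\chi,\xi)|<\lambda(1,\xi_0)$ for every $(\chi,\xi)\ne(1,\xi_0)$, together with a uniform spectral gap away from a neighbourhood of $\xi_0$. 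This aperiodicity of the skew product is where the weak-mixing hypothesis on $\phi$ and $Y$-fullness are both indispensable, and extracting the honest prefactor with good error terms is the technically most delicate step in Sharp's original argument, which carries over verbatim to the present setting.
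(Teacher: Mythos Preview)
This proposition is not proved in the present paper; it is quoted from \cite{Sharp93} (stated there for the universal abelian cover, with the remark here that the argument extends verbatim to arbitrary abelian covers). Your outline --- Markov coding, Pontryagin duality over $\widehat G$, analytic perturbation of the twisted transfer operators, and a Laplace expansion near the pressure minimiser $\xi_0$ producing the $T^{-(1+a/2)}$ prefactor --- is exactly the method of that reference, and your derivation of the variational formula via the inequality $h_\phi(\mu)=h_\phi(\mu)+\int F_{\xi_0}\,d\mu\le P(F_{\xi_0})$ for $\mu$ with $\Phi^Y_\mu=0$ is the standard one.

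One genuine gap in your sketch of (i)$\Leftrightarrow$(ii): the assertion that a sub-semigroup of $\mathbb Z^a$ whose convex cone fills $\mathbb R^a$ must equal $\mathbb Z^a$ is false --- $2\mathbb Z\subset\mathbb Z$ is already a counterexample. The cone condition only forces $I_{\mathrm{TF}}$ to be a \emph{subgroup} (if $s\in I_{\mathrm{TF}}$ then some $-Ns$ lies in $I_{\mathrm{TF}}$ by rational Carath\'eodory, whence $-s=(N-1)s+(-Ns)\in I_{\mathrm{TF}}$). To conclude it is all of $\mathbb Z^a$ you need the separate fact that periodic-orbit classes generate $G$, and this does not follow from the closing lemma alone, since an arbitrary loop in $M$ is not a pseudo-orbit for $\phi$. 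A cleaner route, and the one implicit in \cite{Sharp93}, is to obtain (ii) as a by-product of the analytic machinery: under (i) the same saddle-point computation yields $\#\{\gamma:l(\gamma)\le T,\ [\gamma]=\alpha\}\sim C_\alpha e^{h(Y)T}/T^{1+a/2}$ with $C_\alpha>0$ for \emph{every} $\alpha\in G$ (this is precisely the content of the remark immediately following the proposition), and that gives $Y$-fullness at once.
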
 

In view of the uniqueness of the measure of maximal entropy, we immediately have the following corollary.

\begin{corollary}
We have $h(Y) = h$ if and only if $\Phi_{\mu_0}^Y=0$, where $\mu_0$ is the measure of maximal 
entropy for $\phi$.
\end{corollary}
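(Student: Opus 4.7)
The plan is to deduce this corollary essentially as a direct consequence of the variational characterisation
\[
h(Y) = \sup\{h_\phi(\mu) : \mu \in \mathcal M(\phi), \ \Phi_\mu^Y = 0\}
\]
given at the end of Proposition \ref{asymptotics_abeliancover}, combined with the uniqueness of $\mu_0$ as the measure of maximal entropy for $\phi$.

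For the easy direction, suppose $\Phi_{\mu_0}^Y = 0$. Then $\mu_0$ lies in the set over which the supremum is taken, so
\[
h(Y) \geq h_\phi(\mu_0) = h.
\]
Since trivially $h(Y) \leq h$ (every $\phi_Y$-periodic orbit projects to a $\phi$-periodic orbit of the same period, so $\Pi_Y(T)$ is bounded by the full counting function on $M$), we get equality.

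For the converse, suppose $h(Y) = h$. The strategy is to promote the supremum in Proposition \ref{asymptotics_abeliancover} to a maximum and then invoke uniqueness. Consider the set
\[
\mathcal K = \{\mu \in \mathcal M(\phi) : \Phi_\mu^Y = 0\}.
\]
The pairing $\mu \mapsto \langle \Phi_\mu^Y, [\omega_i]\rangle = \int_M \omega_i(\mathcal X_\phi)\,d\mu$ is weak-$*$ continuous in $\mu$, since $\omega_i(\mathcal X_\phi)$ is a continuous function on $M$. Hence $\mathcal K$ is a weak-$*$ closed subset of the compact set $\mathcal M(\phi)$, and so itself compact. By Lemma \ref{entropyusc}, the map $\mu \mapsto h_\phi(\mu)$ is upper semi-continuous on $\mathcal M(\phi)$, therefore it attains its maximum on the compact set $\mathcal K$. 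Thus there is some $\mu^* \in \mathcal K$ with $h_\phi(\mu^*) = h(Y) = h$. By the uniqueness of the measure of maximal entropy, $\mu^* = \mu_0$, which gives $\Phi_{\mu_0}^Y = 0$.

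The only step that required any real thought is the upgrade from supremum to maximum in the reverse direction, and this is routine given upper semi-continuity of $h_\phi(\cdot)$ and the weak-$*$ continuity of the winding cycle functional; neither requires any new input beyond what is already stated in the paper.
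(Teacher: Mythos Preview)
Your proof is correct and follows the same route as the paper, which simply remarks that the corollary is immediate from Proposition \ref{asymptotics_abeliancover} together with the uniqueness of the measure of maximal entropy. You have filled in the only nontrivial detail---that the supremum in the variational formula is actually attained, via upper semi-continuity of entropy and weak-$*$ compactness of the constraint set---which is exactly the argument the paper leaves implicit.
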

 
 \begin{remark}
(i) A similar asymptotic holds for $\#\{\gamma \in \mathcal P(\phi) \hbox{ : } l(\gamma) \le T, \ [\gamma]=\alpha\}$
 for any $\alpha \in G$, the only modification being that the constant $C$ is changed to 
 $Ce^{\langle \xi,\alpha'\rangle}$, where $\alpha'$ is the torsion-free part of $\alpha$ and $\xi$ is a certain 
 cohomology class in $H^1(M,\mathbb R)$. In fact, $\xi =0$ if and only if $\Phi_{\mu_0}^Y = 0$.
 
 \noindent
 (ii) Earlier results were obtained for geodesic flows over compact negatively curved manifolds:
 Phillips and Sarnak\cite{PhillipsSarnak87} (constant curvature manifolds), Katsuda and Sunada
 \cite{KatsudaSunada88} (constant curvature surfaces), Lalley \cite{Lalley89} and Pollicott
 \cite{Pollicott91} (variable curvature surfaces).
 All these results exploited the time-reversal symmetry of the geodesic flow. 
The extension to Anosov flows was made by Katsuda and Sunada \cite{KatsudaSunada90} under the assumption that the winding cycle for the measure of maximal entropy vanishes.
Results giving more detailed information about the asymptotic behaviour are contained in 
\cite{Anantharaman00}, \cite{Kotani01}, \cite{PollicottSharp01} and \cite{Sharp04}.
 \end{remark}
 
 We can relate the growth rate $h(Y)$ to pressure in the following way \cite{Sharp93}. For each 
 $i=1,\ldots,a$, define
 a H\"older continuous function $\Psi_i : M \to \mathbb R$ by
 $\Psi_i = \omega_i(\mathcal X_\phi)$, where $\omega_i$ and $\mathcal X_\phi$ are as above. Now 
 write $\Psi = (\Psi_1,\ldots,\Psi_a)$ and define $\beta : \mathbb R^a \to \mathbb R$ by
 \[
 \beta(w) = P(\langle w,\Psi\rangle,\phi),
 \]
 where $w =(w_1,\ldots,w_a)$ and $\langle w,\Psi \rangle = \sum_{i=1}^a w_i\Psi_i$.
  If $\phi$ is $Y$-full then the function $\beta$ is strictly convex and there exists a unique 
  $\xi \in \mathbb R^a$ for which $\nabla \beta(\xi) =0$. We then have 
    \[
  h(Y) = \beta(\xi) = h_\phi(\mu_{\langle \xi,\Psi \rangle}).
  \]
 
 Let us summarise this in a lemma.
 
 \begin{lemma} [Sharp \cite{Sharp93}] \label{zeromean}
If $\phi^t : M \to M$ is $Y$-full then there exists a unique $\xi \in \mathbb R^a$ such that
    \[
  h(Y) = \beta(\xi) = h_\phi(\mu_{\langle \xi,\Psi \rangle}).
  \]

 \end{lemma}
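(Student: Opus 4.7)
The plan is to characterise $h(Y)$ as the minimum value of the strictly convex, coercive, real-analytic function $\beta$. By the derivative formula for pressure recalled just above (equation (2.1)), $\nabla \beta(w) = \int_M \Psi \, d\mu_{\langle w,\Psi\rangle}$. Since $\langle \Phi^Y_\mu, [\omega_i]\rangle = \int_M \Psi_i \, d\mu$ by the very definition of the relative winding cycle, a critical point $\xi$ of $\beta$ corresponds precisely to an equilibrium state whose winding cycle vanishes, i.e.\ $\Phi^Y_{\mu_{\langle \xi,\Psi\rangle}} = 0$.

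Next I would show that $Y$-fullness forces $\beta$ to be strictly convex and coercive, hence to admit a unique critical point. Strict convexity is equivalent to the positive definiteness of the Hessian, which is the asymptotic covariance matrix of $\Psi_1,\ldots,\Psi_a$ under the equilibrium state, and this is degenerate exactly when some non-trivial linear combination $\langle v, \Psi\rangle$ is H\"older cohomologous to a constant $c$. Integrating such an identity around a periodic orbit $\gamma$ gives $\langle v, [\gamma]_{\mathrm{TF}}\rangle = c\,l(\gamma)$; applying this to an orbit with $[\gamma]_{\mathrm{TF}} = 0$ (provided by $Y$-fullness, since $0 \in G$) forces $c = 0$, and then ranging $\gamma$ over orbits whose classes span $A$ forces $v = 0$. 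For coercivity, given $v \in \mathbb{R}^a \setminus \{0\}$, $Y$-fullness supplies a periodic orbit $\gamma$ with $\langle v, [\gamma]_{\mathrm{TF}}\rangle > 0$; applying the variational principle to the normalised orbital measure $\mu_\gamma$ yields
\[
\beta(tv) \;\geq\; h_\phi(\mu_\gamma) + t \int \langle v, \Psi\rangle \, d\mu_\gamma \;=\; t\,\frac{\langle v, [\gamma]_{\mathrm{TF}}\rangle}{l(\gamma)} \;\longrightarrow\; +\infty.
\]

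Finally I would identify $\beta(\xi)$ with $h(Y)$ via Proposition \ref{asymptotics_abeliancover}. Since $\Phi^Y_{\mu_{\langle \xi, \Psi\rangle}} = 0$, the integral $\int \langle \xi, \Psi\rangle \, d\mu_{\langle \xi,\Psi\rangle}$ vanishes, so $\beta(\xi) = h_\phi(\mu_{\langle \xi, \Psi\rangle})$. The variational characterisation of $h(Y)$ in Proposition \ref{asymptotics_abeliancover} therefore gives $h(Y) \geq \beta(\xi)$. Conversely, for any $\mu \in \mathcal{M}(\phi)$ with $\Phi^Y_\mu = 0$, the variational principle yields $\beta(\xi) \geq h_\phi(\mu) + \int \langle \xi, \Psi \rangle \, d\mu = h_\phi(\mu)$, so taking the supremum gives $\beta(\xi) \geq h(Y)$.

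I expect the coercivity step to be the technically delicate point: one must promote the combinatorial content of $Y$-fullness into a genuine quantitative inequality for the pressure in every direction of $\mathbb{R}^a$, controlling the ratio $\langle v, [\gamma]_{\mathrm{TF}}\rangle / l(\gamma)$ well enough that the linear term in $t$ dominates. The strict convexity argument via H\"older cohomology is more routine but still requires translating the set-theoretic statement of $Y$-fullness into the non-existence of exact cohomological identities among the $\Psi_i$.
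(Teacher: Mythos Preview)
Your argument is correct and matches the approach the paper sketches (the lemma is quoted from \cite{Sharp93} without a self-contained proof here): establish strict convexity and coercivity of $\beta$ from $Y$-fullness, locate the unique critical point $\xi$, and identify $\beta(\xi)$ with $h(Y)$ via the variational characterisation in Proposition~\ref{asymptotics_abeliancover}. Your worry about coercivity is unnecessary---once you have, for each unit vector $v$, a periodic orbit $\gamma_v$ with $\langle v,[\gamma_v]_{\mathrm{TF}}\rangle>0$, a compactness argument on the sphere (or, equivalently, passing to a convergent subsequence of $w_n/|w_n|$ along any putative sequence with $|w_n|\to\infty$ and $\beta(w_n)$ bounded) immediately yields $\beta(w)\to\infty$ as $|w|\to\infty$.
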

 
Another characterisation of $h(Y)$ is as the abscissa of convergence of
the series
 \[
 \sum_{\substack{\gamma \in \mathcal P(\phi) : \\ [\gamma]=0}}
 e^{-sl(\gamma)}.
\]
Later, it will be convenient to replace this series with a modified version.
We introduce some notation.
Let $\mathcal P(\phi)'$ denote the set of prime periodic orbits.
For $\gamma \in \mathcal P(\phi)$, we define $\Lambda(\gamma)$ as follows. 
Any we may write $\gamma$ as 
$\gamma= \gamma_0^m$, where $\gamma_0$ is a prime periodic orbit and $m \ge 1$. 
Then $\Lambda(\gamma) = l(\gamma_0)$. We have the following lemma.
 
 \begin{lemma} \label{sameabscissa}
 The series 
 \[
 \sum_{\substack{\gamma \in \mathcal P(\phi)' : \\ [\gamma]=0}}
 e^{-sl(\gamma)}
\quad \mathrm{and} \quad
\sum_{\substack{\gamma \in \mathcal P(\phi) : \\ [\gamma]=0}}
 \frac{\Lambda(\gamma)}{l(\gamma)} e^{-sl(\gamma)}
 \]
 each have abscissa of convergence equal to $h(Y)$.
 \end{lemma}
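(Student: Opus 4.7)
The plan is to compare the two series in the statement with the reference series
\[
f_0(s) := \sum_{\substack{\gamma \in \mathcal P(\phi):\\ [\gamma]=0}} e^{-sl(\gamma)},
\]
which has abscissa of convergence $h(Y)$ by the characterisation noted immediately before the lemma. Write $f_1(s)$ and $f_2(s)$ for the first and second series, with abscissae $\sigma_1$ and $\sigma_2$. The term-by-term comparisons $f_1(s) \le f_2(s) \le f_0(s)$ are immediate: $f_1$ picks out the $m=1$ contributions of $f_2$ with weight $\Lambda(\gamma)/l(\gamma)=1$, and $\Lambda(\gamma)/l(\gamma) \le 1$ gives the second inequality. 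Hence $\sigma_1 \le \sigma_2 \le h(Y)$, and it remains to show $\sigma_1 \ge h(Y)$.

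The central step is to re-sum $f_2$ and $f_0$ over prime orbits. Writing $\gamma = \gamma_0^m$ uniquely with $\gamma_0$ prime and $m \ge 1$, the condition $[\gamma]=0$ becomes $m[\gamma_0] = 0$ in $G = A \oplus F$. Since $A$ is free, this forces $[\gamma_0]$ to lie in the finite torsion subgroup $F$: either $[\gamma_0]=0$ (so all $m \ge 1$ are allowed) or $[\gamma_0]$ has some order $k \ge 2$ (so $m \in k\mathbb{Z}_{\ge 1}$). Summing the resulting logarithmic and geometric series gives
\[
f_2(s) = -\sum_{[\gamma_0]=0} \log\!\left(1-e^{-sl(\gamma_0)}\right) - \sum_{k \ge 2}\frac{1}{k}\sum_{\substack{\gamma_0 \text{ prime}: \\ [\gamma_0] \text{ of order } k}} \log\!\left(1-e^{-skl(\gamma_0)}\right),
\]
and an identical decomposition of $f_0$ with $-\log(1-x)$ replaced by $x/(1-x)$.

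The elementary estimates $x \le -\log(1-x) \le 2x$ and $x \le x/(1-x) \le 2x$ for $0 < x \le 1/2$ (valid uniformly over $\gamma_0$ once $s$ is bounded away from $0$, since $\phi$-periods are bounded below by some $\ell_0 > 0$) show that the $[\gamma_0]=0$ pieces of both decompositions are comparable to $f_1(s)$, hence share its abscissa $\sigma_1$. The torsion pieces in each decomposition are in turn dominated by $\sum_{\gamma_0 \in \mathcal P(\phi)'} e^{-2sl(\gamma_0)}$, which converges for $s > h/2$ since the total number of prime $\phi$-orbits grows at rate $h$. Consequently these torsion contributions have abscissa at most $h/2$, and the two decompositions yield
\[
\sigma_2 \le \max(\sigma_1, h/2) \quad \text{and} \quad h(Y) \le \max(\sigma_1, h/2).
\]

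If $h(Y) > h/2$ the second inequality forces $\sigma_1 \ge h(Y)$, and combined with $\sigma_1 \le \sigma_2 \le h(Y)$ we conclude $\sigma_1 = \sigma_2 = h(Y)$. The remaining case $h(Y) \le h/2$ is the only delicate point; here I would invoke Proposition \ref{asymptotics_abeliancover} (using $Y$-fullness, which follows from transitivity of $\phi_Y$ as shown in Section \ref{symbolicdynamics}), whose prime-orbit asymptotic directly yields $\sigma_1 = h(Y)$. The main technical nuisance in this plan is the bookkeeping of torsion-class contributions, but each individual estimate is routine once one commits to the prime decomposition.
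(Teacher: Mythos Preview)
Your argument is correct, but it takes a different route from the paper's and leans on a heavier input in the edge case. The paper avoids your case split $h(Y) > h/2$ versus $h(Y) \le h/2$ by a simple trick: on the sum over non-prime orbits with $[\gamma^m]=0$ it inserts the harmless factor $e^{\langle \xi,[\gamma^m]_{\mathrm{TF}}\rangle}=1$ (where $\xi$ is the vector from Lemma~\ref{zeromean}), then drops the constraint $[\gamma^m]=0$. The resulting unconstrained weighted sum has growth rate $P(\langle\xi,\Psi\rangle,\phi)=h(Y)$ rather than $h$, so the non-prime contribution has abscissa at most $h(Y)/2$, which is automatically strictly below $h(Y)$. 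This is entirely self-contained: it uses only Lemma~\ref{zeromean}, not the full asymptotic of Proposition~\ref{asymptotics_abeliancover}.

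Your route is more elementary when $h(Y)>h/2$, but in the complementary case you invoke the prime-orbit asymptotic from Proposition~\ref{asymptotics_abeliancover}. Be careful here: as stated in this paper, that proposition is for \emph{all} periodic orbits, and the paragraph introducing it explicitly cites the present lemma to pass from prime to all orbits. So to avoid circularity you must appeal directly to the prime-orbit version in \cite{Sharp93}, not to Proposition~\ref{asymptotics_abeliancover} as formulated here. With that adjustment your proof is complete; the paper's $\xi$-trick simply buys a cleaner, case-free argument using a lighter ingredient.
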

 
 \begin{proof}
First we note that 
\[
 \sum_{\substack{\gamma \in \mathcal P(\phi) : \\ [\gamma]=0}}
e^{-sl(\gamma)}
-  \sum_{\substack{\gamma \in \mathcal P(\phi)' : \\ [\gamma]=0}}
 e^{-sl(\gamma)}
= \sum_{m=2}^\infty \sum_{\substack{\gamma \in \mathcal P(\phi)' : \\ [\gamma^m]=0}}
 e^{-sml(\gamma)}.
 \]
 The abscissa of convergence of the Right Hand Side can be bounded by
 \begin{align*}
 \lim_{T \to \infty} \frac{1}{T} \log
 \sum_{m=2}^\infty \sum_{\substack{\gamma \in \mathcal P(\phi)' : \\ ml(\gamma) \le T,
 \ [\gamma^m]=0}}
 1
 &=
 \lim_{T \to \infty} \frac{1}{T} \log
 \sum_{m=2}^\infty \sum_{\substack{\gamma \in \mathcal P(\phi)' : \\ ml(\gamma) \le T,
 \ [\gamma^m]=0}}
 e^{\langle \xi,[\gamma^m]_{\mathrm{TF}}\rangle}
 \\
 &\le
  \lim_{T \to \infty} \frac{1}{T} \log
 \sum_{m=2}^\infty \sum_{\substack{\gamma \in \mathcal P(\phi)' : \\ ml(\gamma) \le T}}
 e^{\langle \xi,[\gamma^m]_{\mathrm{TF}}\rangle}
 \\
 &\le
  \lim_{T \to \infty} \frac{1}{T} \log
 \sum_{m=2}^{[T/l_0]} \sum_{\substack{\gamma \in \mathcal P(\phi) : \\ l(\gamma) \le T/2}}
 e^{\langle \xi,[\gamma^m]_{\mathrm{TF}}\rangle}
 \\
 &= P(\langle \xi,\Psi \rangle)/2 = h(Y)/2,
 \end{align*}
 where $l_0$ denotes the period of the shortest orbit in $\mathcal P(\phi)$.
 The second statement follows from a similar argument.

 \end{proof}

 As a part of our approach to Theorem \ref{intro-main} will capture information about an Anosov
flow $\phi^t : M \to M$ and its lifts in terms of symbolic dynamical systems: subshifts of finite type and their skew-product extensions. We will introduce these systems in the next section and then go on to
discuss their relation to Anosov flows in section \ref{symbolicdynamics}.

\section{Subshifts of Finite Type and Group Extensions} \label{section:ssftge}

In this section we will define countable state Markov shifts and discuss some of their properties.
Basic definitions and results are taken from chapter 7 of \cite{kitchens}.
Let us emphasise that throughout we have the hypothesis that our Markov shifts are locally compact (this excludes examples such as the infinite full shift and the renewal shift). We shall be particularly concerned with finite state shifts and
skew product extensions of these by
a countable group.

Let $S$ be a countable set, called the {\it alphabet}, and let $A$ be a matrix, 
called the transition matrix, indexed by 
$S \times S$ with entries zero
or one. We then define the space
\[
\Sigma^+ =\Sigma_A^+ = \left\{x = (x_n)_{n=0}^\infty \in S^{\mathbb Z^+} \hbox{ : }
A(x_n,x_{n+1})=1 \ \forall n \in \mathbb Z^+\right\},
\]
with the product topology induced by the discrete topology on $S$. This topology is compatible
with the metric $d(x,y)= 2^{-n(x,y)}$, where
\[
n(x,y) = \inf\{n \hbox{ : } x_n \neq y_n\},
\]
with $n(x,y)=\infty$ if $x=y$. 
If $S$ is finite then $\Sigma^+$ is compact.
We say that $A$ is {\it locally finite} if all its row and column sums are finite.
Then $\Sigma^+$ is locally compact if and only if $A$ is locally finite.
(The skew product extensions we consider have this latter property.)

We define the (one-sided) countable state topological Markov shift
$\sigma : \Sigma^+ \to \Sigma^+$ by
$(\sigma x)_n = x_{n+1}$.  This is a continuous map.
We will say that $\sigma$ is {\it topologically
transitive} if it has a dense orbit and {\it topologically mixing} if, given non-empty 
open sets $U,V \subset \Sigma^+$, there exists
$N \geq 0$ such that $\sigma^{-n}(U) \cap V \neq \varnothing$ for all $n \geq N$. 
We say that the matrix $A$ is {\it irreducible} if, for each $(i,j) \in S \times S$, there exists
$n =n(i,j)\geq 1$ such that $A^n(i,j)>0$.
For $A$ irreducible, set $p\geq 1$ to be the greatest common divisor of periods of periodic orbits
$\sigma : \Sigma^+ \to \Sigma^+$; this $p$ is called the period of $A$.
We say that $A$ is {\it aperiodic} if $p=1$ or, equivalently,
if there exists $n \geq 1$ such that $A^n$ has all entries positive.
Suppose that $A$ is locally finite. Then $\sigma : \Sigma^+ \to \Sigma^+$ is topologically
transitive if and only if $A$ is irreducible and $\sigma : \Sigma^+ \to \Sigma^+$ is topologically
mixing if and only if $A$ is aperiodic.

Suppose that $A$ is irreducible but not aperiodic and fix $i \in S$. Then we may partition
$S$ into sets $S_l$, $l=0,\ldots,p-1$, defined by
\[
S_l = \{j \hbox{ : } A^{np+l}(i,j)>0 \mbox{ for some } n \geq 1\}.
\]
(This partition is independent of the choice of $i$.)
For each $l$, let $A_l$ denote the restriction of $A$ to $S_l \times S_l$; then 
$\sigma : \Sigma_{A_l}^+ \to \Sigma_{A_{l+1}}^+$ (mod $p$) and $A_l^p$
is aperiodic.

We say that an $n$-tuple
$w = (w_0,\ldots,w_{n-1}) \in S^n$ is an {\it allowed word} of length $n$ if
$A(w_j,w_{j+1})=1$ for $j =0,\ldots,n-2$. 
We will write $\mathcal W^n$ for the set of allowed words of length $n$.
If $w \in \mathcal W^n$ then we define 
the associated length $n$ cylinder set $[w]$ by 
\[
[w] = \{x \in \Sigma_A^+ \hbox{ : } x_j =w_j, \ j=0,\ldots,n-1\}.
\]

For a function $f : \Sigma^+ \to \mathbb R$, set
\[
V_n(f) = \sup\{|f(x)-f(y)| \hbox{ : } x_j=y_j, \ j=0,\ldots,n-1\}.
\]
We say that $f$ is {\it locally H\"older continuous} if there exist $\alpha>0$ and $C \geq 0$ such that, for all $n \geq 1$,
$V_n(f) \leq C2^{-n\alpha}$.
(There is no requirement on $V_0(f)$ and a locally H\"older $f$ may be unbounded.)
The minimal possible $C$ is called the $\alpha$-H\"older seminorm

Suppose that $\sigma : \Sigma^+ \to \Sigma^+$ is topologically
transitive and let 
$f : \Sigma^+ \to \mathbb R$ be a locally H\"older continuous function. 
Following 
Sarig \cite{Sarig},
we define the
{\it Gurevi\v{c} pressure}, $P_{\mathrm{Gur}}(f,\sigma)$, of $f$ to be
\[
P_{\mathrm{Gur}}(f,\sigma) = \limsup_{n \to \infty} \frac{1}{n} \log 
\sum_{\substack{\sigma^n x=x \\ x_0=a}}
e^{f^n(x)},
\]
where $a \in S$. (The definition is independent of the choice of $a$.)

\begin{remark}
In \cite{Sarig}, Sarig gives this definition in the case where
 $\sigma : \Sigma^+ \to \Sigma^+$ is topologically mixing.
 However, the above decomposition of $\Sigma^+ = \Sigma_{A_0}^+ \cup \cdots
 \cup \Sigma_{A_{p-1}}^+$, with $\sigma^p$ topologically mixing on each component,
 together with the regularity of the function $f$,
 shows that the same definition may be made in the topologically transitive case.
 \end{remark}
 
 It is immediate from the definition that if $f \le f'$ then
 $P_{\mathrm{Gur}}(f,\sigma) \le P_{\mathrm{Gur}}(f',\sigma)$ and that for any constant $c \in \mathbb R$
 we have
  $P_{\mathrm{Gur}}(f+c,\sigma) \le P_{\mathrm{Gur}}(f,\sigma)+c$.
  We say that $f$ and $f'$ are {\it cohomologous} if their difference takes the form
  $f-f' = u\circ \sigma-u$. It is also clear from the definition that if $f$ and $f'$ are cohomologous then
   $P_{\mathrm{Gur}}(f,\sigma) = P_{\mathrm{Gur}}(f',\sigma)$.
 We note the following useful lemma.
 
 \begin{lemma} \label{strictlydecreasing}
 If $r: \Sigma^+ \to \mathbb R$ and $f : \Sigma^+ \to \mathbb R$ are locally H\"older 
 and $r$ is cohomolgous to a function $r'$ satisfying $\inf_{x \in \Sigma^+} r'(x) >0$ then
 $s \mapsto P_{\mathrm{Gur}}(-sr+f,\sigma)$ is strictly decreasing.
 \end{lemma}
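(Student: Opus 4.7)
The plan is to exploit cohomology invariance to reduce to the case of a strictly positive potential, and then derive strict monotonicity from the monotonicity and constant-shift properties already listed before the lemma.

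First, I would verify that $P_{\mathrm{Gur}}(-sr+f,\sigma) = P_{\mathrm{Gur}}(-sr'+f,\sigma)$ for every $s \in \mathbb{R}$. This is the cohomology invariance mentioned in the paper, and it is essentially automatic: if $r = r' + u\circ\sigma - u$, then $-sr + f$ differs from $-sr' + f$ by the coboundary $(-su)\circ\sigma - (-su)$, and coboundaries telescope to zero when evaluated on periodic points, i.e.\ $((-su)\circ\sigma - (-su))^n(x) = -su(\sigma^n x) + su(x) = 0$ whenever $\sigma^n x = x$. Hence the defining sums for $P_{\mathrm{Gur}}(-sr+f,\sigma)$ and $P_{\mathrm{Gur}}(-sr'+f,\sigma)$ are literally identical.

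Next, set $\epsilon := \inf_{x \in \Sigma^+} r'(x) > 0$. For any $s_1 < s_2$ we have the pointwise bound
\[
-s_2 r'(x) + f(x) = -s_1 r'(x) + f(x) - (s_2 - s_1) r'(x) \le -s_1 r'(x) + f(x) - (s_2 - s_1)\epsilon.
\]
Applying first the monotonicity property $P_{\mathrm{Gur}}(g,\sigma) \le P_{\mathrm{Gur}}(g'',\sigma)$ when $g \le g''$ and then the constant-shift property $P_{\mathrm{Gur}}(g + c, \sigma) \le P_{\mathrm{Gur}}(g,\sigma) + c$ with $c = -(s_2-s_1)\epsilon$, I obtain
\[
P_{\mathrm{Gur}}(-s_2 r' + f, \sigma) \le P_{\mathrm{Gur}}(-s_1 r' + f, \sigma) - (s_2 - s_1)\epsilon < P_{\mathrm{Gur}}(-s_1 r' + f, \sigma).
\]
Combined with the cohomology-invariance step, this shows $s \mapsto P_{\mathrm{Gur}}(-sr+f,\sigma)$ is strictly decreasing, indeed with slope bounded above by $-\epsilon$.

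There is no real obstacle here; the only mild subtlety is recognising that the hypothesis ``$r$ cohomologous to a strictly positive $r'$" is exactly the right replacement for ``$\inf r > 0$" because the Gurevi\v{c} pressure is a function only of the cohomology class. All other ingredients are immediate consequences of the properties of $P_{\mathrm{Gur}}$ collected just before the lemma.
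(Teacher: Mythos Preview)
Your proof is correct and follows essentially the same argument as the paper: reduce to $r'$ via cohomology invariance, bound $-s_2 r' + f \le -s_1 r' + f - (s_2-s_1)\inf r'$, and apply monotonicity together with the constant-shift property of Gurevi\v{c} pressure. The only difference is cosmetic---you spell out the telescoping of the coboundary on periodic points, whereas the paper simply cites cohomology invariance.
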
 
 
 \begin{proof}
 If $r$ and $r'$ are cohomologous then, for any $s \in \mathbb R$, $-sr$ and $-sr'$ are cohomologous.
 Let $c = \inf_{x \in \Sigma^+} r'(x) >0$ and suppose $t>s$. Then
 \[
 -tr' = -sr -(t-s)r' \le -sr -(t-s)c
 \]
 and therefore
 \begin{align*}
 P_{\mathrm{Gur}}(-tr+f,\sigma) 
 &= P_{\mathrm{Gur}}(-tr'+f,\sigma) \\
&\le P_{\mathrm{Gur}}(-sr' -(t-s)c+f,\sigma) \\
 &= P_{\mathrm{Gur}}(-sr'+f ,\sigma) -(t-s)c \\
 &<P_{\mathrm{Gur}}(-sr'+f ,\sigma)
 =P_{\mathrm{Gur}}(-sr+f ,\sigma).
 \end{align*}
 \end{proof}
 
We now specialise to the case where $S$ is finite. In this situation, we call 
$\sigma : \Sigma^+ \to \Sigma^+$
a (one-sided) subshift of finite type. The above definitions and results hold. If $f : \Sigma^+
\to \mathbb R$ is H\"older continuous then $f$ is locally H\"older. Provided
$\sigma : \Sigma^+ \to \Sigma^+$ is topologically transitive, the Gurevi\v{c} pressure
$P_{\mathrm{Gur}}(f,\sigma)$ agrees with the standard pressure $P(f,\sigma)$, defined by
\[
P(f,\sigma) = \limsup_{n \to \infty} \frac{1}{n} \log 
\sum_{\sigma^n x=x}
e^{f^n(x)}
\]
and if $\sigma$ is topologically mixing then the $\limsup$ may be replaced 
with a limit.

We now consider skew product extensions of a shift of finite type 
$\sigma : \Sigma^+ \to 
\Sigma^+$, which we will assume to be
topologically mixing. Let $G$ be a countable group (with identity element $e$) and let 
$\psi : \Sigma^+ \to G$ be a function
depending only on two co-ordinates, $\psi(x)=\psi(x_0,x_1)$. 

(One could consider more general $\psi$ but this set-up suffices for our application to Anosov flows.) 
This data defines a 
{\it group extension} or
{\it skew product
extension} $T_\psi : \Sigma^+ \times G \to \Sigma^+ \times G$ by
$T_\psi(x,g) = (\sigma x,g\psi(x))$. For $n \geq 1$ define
$\psi_n$ by
\[
\psi^n(x) =   \psi(x) \psi(\sigma x)\cdots \psi(\sigma^{n-1}x);
\]
then $T_\psi^n(x,g) = (x,g)$ if and only if
$\sigma^n x=x$ and $\psi_n(x)=e$.

The map $T_\psi : \Sigma^+ \times G \to \Sigma^+ \times G$ 
is itself a countable state Markov shift
with alphabet $S \times G$ and transition matrix $\widetilde A$ defined by
$\widetilde A((i,g),(j,h)) = 1$ if $A(i,j)=1$ and $\psi(i,j) = g^{-1}h$, and 
$\widetilde A((i,g),(j,h)) = 0$
otherwise.
Clearly, $\widetilde A$ is locally finite and so the topological transitivity and 
topological 
mixing of $\widetilde \sigma$ are equivalent to $\widetilde A$ being irreducible and 
aperiodic,
respectively.

Let $f : \Sigma^+ \to \mathbb R$ be H\"older continuous and define 
$\widetilde f : \Sigma_A^+ \times G \to \mathbb R$ by $\widetilde f(x,g) = f(x)$;
then $\widetilde f$ is locally H\"older continuous and its Gurevi\v{c} pressure 
$P_{\mathrm{Gur}}(\widetilde f,T_\psi)$ is defined. In fact,
it is easy to see that, due to the mixing of $\sigma$,
\[
P_{\mathrm{Gur}}(\widetilde f,T_\psi) = \limsup_{n \to \infty} \frac{1}{n} \log 
\sum_{\substack{\sigma^n x=x \\ \psi^n(x)=e}}
e^{f^n(x)}.
\]

 We end this section by discussing two-sided subshifts of finite type
and suspended flows over them. Given a finite alphabet $S$ and transition matrix
$A$, we define
\[
\Sigma= \Sigma_A = \left\{x = (x_n)_{n=0}^\infty \in S^{\mathbb Z} \hbox{ : }
A(x_n,x_{n+1})=1 \ \forall n \in \mathbb Z\right\}
\]
and the (two-sided) shift of finite type 
$\sigma : \Sigma \to \Sigma$ by
$(\sigma x)_n = x_{n+1}$. As before, we give $\Sigma$
with the product topology induced by the discrete topology on $S$ and this is compatible
with the metric $d(x,y)= 2^{-n(x,y)}$, where
\[
n(x,y) = \inf\{|n| \hbox{ : } x_n \neq y_n\},
\]
with $n(x,y)=\infty$ if $x=y$. 
Then $\Sigma$ is compact and $\sigma$ is a homeomorphism.
There is an obvious one-to-one correspondence between the periodic points of 
$\sigma : \Sigma \to \Sigma$ and $\sigma : \Sigma^+ \to \Sigma^+$.
Furthermore, we may pass from H\"older continuous functions on $\Sigma$ to H\"older
continuous functions on $\Sigma^+$ in such a way that sums around periodic orbits are preserved.
More precisely, we have the following lemma, due originally to Sinai \cite{Sinai72}, which appears as Proposition 1.2 of \cite{PP}.

\begin{lemma} \label{sinai}
Let $f : \Sigma \to \mathbb R$ be H\"older continuous. Then
there is a H\"older continuous function $f' : \Sigma^+ \to \mathbb R$ 
(with a smaller H\"older exponent) such that $f^n(x) =  (f')^n(x)$, whenever 
$\sigma^n x=x$.
\end{lemma}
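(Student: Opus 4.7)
The plan is to realise $f$ as cohomologous to a function $g$ on $\Sigma$ that depends only on the non-negative coordinates, and then let $f'$ be the function this $g$ induces on $\Sigma^+$. Since cohomology preserves Birkhoff sums around periodic orbits, the conclusion will follow. Concretely, for each symbol $a \in S$ fix once and for all an admissible one-sided past $r(a) = (r(a)_k)_{k \le 0}$ with $r(a)_0 = a$ (this is possible because every symbol occurs in some allowed word of every length). Define the retraction
\[
\pi : \Sigma \to \Sigma, \qquad \pi(x)_k = \begin{cases} x_k & k \ge 0, \\ r(x_0)_k & k < 0,\end{cases}
\]
so $\pi(x)$ depends only on $(x_n)_{n \ge 0}$, and set
\[
u(x) := \sum_{n=0}^{\infty}\bigl(f(\sigma^n x) - f(\sigma^n \pi(x))\bigr).
\]

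The key observation is that $\sigma^n x$ and $\sigma^n \pi(x)$ agree in every coordinate of index $\ge -n$, so $d(\sigma^n x, \sigma^n \pi(x)) \le 2^{-(n+1)}$; H\"older continuity of $f$ with exponent $\alpha$ then gives a summable bound and $u$ is well defined. To see $u$ is H\"older, take $x,y$ with $n(x,y) = N$ and split the defining series at $n \approx N/2$. For small $n$, each of $\sigma^n x$, $\sigma^n y$ and $\sigma^n\pi(x)$, $\sigma^n\pi(y)$ agree up to index $\pm(N-n)$, yielding a term bound $\le 2C\, 2^{-(N-n)\alpha}$; for large $n$, use the individual bound $\le C\, 2^{-n\alpha}$ on each summand. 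The two tails combine to $|u(x)-u(y)| \le C'\, 2^{-N\alpha/2}$, so $u$ is H\"older with exponent $\alpha/2$ (the promised loss of exponent).

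Now define $g := f - u + u\circ\sigma$. A direct rearrangement gives
\[
g(x) = f(\pi(x)) + \sum_{n=0}^{\infty}\bigl(f(\sigma^{n+1}\pi(x)) - f(\sigma^{n}\pi(\sigma x))\bigr),
\]
and since $\sigma\pi(x)$ and $\pi(\sigma x)$ agree in every non-negative coordinate (both continue $(x_1,x_2,\ldots)$ into the future), each term in this expression depends only on $(x_n)_{n \ge 0}$. Hence $g$ factors through the natural projection $\Sigma \to \Sigma^+$ and defines a H\"older continuous $f' : \Sigma^+ \to \mathbb{R}$ of the same exponent $\alpha/2$. Finally, for any periodic point with $\sigma^n x = x$,
\[
f^n(x) - (f')^n(x) = \sum_{k=0}^{n-1}\bigl(u(\sigma^k x) - u(\sigma^{k+1} x)\bigr) = u(x) - u(\sigma^n x) = 0,
\]
which is the required identity.

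The only substantive step is the H\"older estimate for $u$ in the second paragraph; the rest is formal bookkeeping (checking that $\pi$ is well defined, that $g$ depends only on forward coordinates, and that the coboundary telescopes on orbits). Since the argument is entirely standard and the loss from exponent $\alpha$ to $\alpha/2$ is exactly the one recorded in the statement, I expect no genuine obstacle.
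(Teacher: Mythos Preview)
Your argument is correct and is precisely the standard Sinai--Bowen construction that the paper defers to by citing Proposition~1.2 of \cite{PP}: fix an admissible past for each symbol, define the transfer function $u$ by summing the differences along the forward orbit, and observe that $g=f-u+u\circ\sigma$ depends only on future coordinates while the coboundary telescopes on periodic orbits. The paper gives no independent proof, so there is nothing further to compare.
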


We may also define suspended flows over $\sigma : \Sigma_A \to \Sigma_A$.
Given a strictly positive continuous function $r : \Sigma \to \mathbb R^+$, we define the 
$r$-suspension space 
\[
\Sigma^r = \{(x,s) \hbox{ : } x \in \Sigma, \ 0 \leq s \leq r(x)\}/\sim,
\]
where $(x,r(x)) \sim (\sigma x,0)$. The suspended flow $\sigma_r^t : \Sigma^r \to \Sigma^r$ is defined by
$\sigma_r^t(x,s) = (x,s+t)$ modulo the identifications. Clearly, there is a natural one-to-one
correspondence between periodic orbits for $\sigma_r^t : \Sigma^r \to \Sigma^r$ and periodic orbits for
$\sigma :  \Sigma \to \Sigma$.

Furthermore, if $\gamma$ is a periodic $\sigma_r$-orbit corresponding to the
periodic $\sigma$-orbit $\{x,\sigma x, \ldots, \sigma^{n-1}x\}$ then
the period of $\gamma$ is equal to $r^n(x)$.

\section{Gurevi\v{c} Entropy for Group Extensions}\label{section:gege}

In this section, we initiate the comparison between Gurevi\v{c} entropy and Gurevi\v{c} pressure  for group extensions of subshifts of finite type, and entropy and pressure for the base transformation and the abelianized extention. Carefully combining the result of Stadlbauer \cite{Stadlbauer13} and
the results of \cite{PollicottSharp94} will produce one direction of a proof of Theorem \ref{intro-gurevicentropy}. The other direction will lead us to prove a new result on Gurevi\v{c} pressure, which is appears in section \ref{amenabilityandpressure}.

Let $\sigma : \Sigma^+ \to \Sigma^+$ be a one-sided subshift of finite type.
For a countable group $G$ and a function $\psi : \Sigma \to G$, 

we consider the group extension
\[
T_\psi : \Sigma\times G \to \Sigma\times G
:
(x,g)\mapsto (\sigma x, g\psi(x)),
\]
which we assume to be transitive.

Let $f : \Sigma^+ \to \mathbb R$ be a H\"older continuous function.
We will also use $f$ to denote the function on $\Sigma^+ \times G$ defined by
$f(x,g)=f(x)$.
We wish to study the asymptotics of periodic points for $T_\psi$ (and compare them with those for
$\sigma$) when they are
weighted by $f$.
Of course (provided $G$ is infinite),
$T_\psi$ will have infinitely many periodic points with the same period but we will restrict 
to periodic points for which the second co-ordinate in the identity element.

In other words, we wish to compare the Gurevi\v{c} pressure
$P_{\mathrm{Gur}}(f,T_\psi)$ with the pressure $P(f,\sigma)$.
It is clear that 
$P_{\mathrm{Gur}}(f,T_\psi)\le P(f,\sigma)$ and it is natural to ask when equality holds.

This question has received considerable attention when the system and function exhibit a natural
``time-reversal'' symmetry.
Suppose there is a fixed point free involution $\kappa : S \to S$ such that
$A(\kappa j,\kappa i) = A(i,j)$, for all $i,j \in S$. We say that 
the skew product $T_\psi : \Sigma^+ \times G \to \Sigma^+ \times G$ is
{\it symmetric} (with respect to $\kappa$) if $\psi(\kappa j,\kappa i) = \psi(i,j)^{-1}$.
A function $f : \Sigma^+ \to \mathbb R$ is called {\it weakly symmetric} if, for all
$n \geq 1$ and and all length $n$ cylinders $[z_0,z_1,\ldots,z_{n-1}]$, there exists
$D_n >0$ such that $\lim_{n \to \infty} D_n^{1/n}=1$ and
\[
\sup_{\substack{x \in [z_0,\ldots,z_{n-1}] \\ y \in [\kappa z_{n-1},\ldots,\kappa z_{0}]}}
\exp(f^n(x)-f^n(y)) \leq D_n.
\]

The following is the main result of Stadlbauer \cite{Stadlbauer13},
restricted to the case where the base is a (finite state) subshift of finite
 type. We
 will use this in subsequent 
arguments.
(More generally, Stadlbauer considers skew product expansions of countable state
 Markov shifts.) We include his more general result on the spectral radius of the transfer operator (Theorem 5.4 \cite{Stadlbauer13}, there it is stated for pressure), the definition of which follows beneath the proposition.
 
 \begin{proposition} [Stadlbauer \cite{Stadlbauer13}, Theorem 5.4, and Theorems 4.1 and 5.6] \label{stadlbauer}
 Let $T_\psi : \Sigma^+ \times G \to \Sigma^+ \times G$ be a transitive
skew-product extension of a mixing subshift of finite type 
 $\sigma : \Sigma^+ \to \Sigma^+$ by a countable group $G$.
 
 If $G$ is non-amenable, then $\log \mathrm{spr}_{\mathcal H}(\mathcal L_f) < P(f,\sigma)$ for  $f : \Sigma_A^+ \to \mathbb R$ H\"older continuous.
 
 If, in addition, $T_\psi : \Sigma^+ \times G \to \Sigma^+ \times G$ is assumed to be symmetric and $f : \Sigma_A^+ \to \mathbb R$
 is a weakly symmetric H\"older continuous function, then we have $P_{\mathrm{Gur}}( f,T_\psi) = P(f,\sigma)$
 if and only if $G$ is amenable.
 
 \end{proposition}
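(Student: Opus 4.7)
The strategy is to express both $\mathrm{spr}_{\mathcal H}(\mathcal L_f)$ and $P_{\mathrm{Gur}}(f,T_\psi)$ in terms of iterated transfer sums on $\Sigma^+ \times G$ and then compare them to the $\ell^2(G)$-spectral radius of an associated random walk operator, so that Day's generalization of Kesten's amenability criterion can be applied. First, I would normalize $f$ by replacing it with the cohomologous function $f_0 = f - P(f,\sigma) + \log h - \log h \circ \sigma$, where $h$ is the leading eigenfunction of the Ruelle--Perron--Frobenius operator for the base subshift $\sigma : \Sigma^+ \to \Sigma^+$. After this change, $\mathcal L_{f_0}\mathbf 1 = \mathbf 1$ on the base and the weights $\{e^{f_0(y)}: \sigma y = x\}$ form a probability distribution on preimages. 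On $\Sigma^+ \times G$, the transfer operator acts by
$$
(\mathcal L_{f_0}\varphi)(x,g) = \sum_{\sigma y = x} e^{f_0(y)}\varphi(y,g\psi(y)^{-1}),
$$
and its iterates encode a Markov-type chain on the fiber $G$ whose return-to-identity probabilities compute $P_{\mathrm{Gur}}$. A direct calculation, using topological mixing to replace the $\limsup$ by a growth rate of operator norms applied to $\mathbf 1_{[a] \times \{e\}}$, gives the elementary bound
$$
\exp(P_{\mathrm{Gur}}(f,T_\psi)) \leq \mathrm{spr}_{\mathcal H}(\mathcal L_f).
$$

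The main work for the first statement is to prove the strict inequality $\mathrm{spr}_{\mathcal H}(\mathcal L_{f_0}) < 1$ when $G$ is non-amenable. Here I would split $\mathcal L_{f_0}^n$ according to the value taken by $\psi^n$ and apply a Cauchy--Schwarz estimate to the composition $\mathcal L_{f_0}^n (\mathcal L_{f_0}^n)^*$, which is a positive, essentially symmetric operator whose kernel on $G$ is induced by a symmetric probability measure $\mu$ on $G$ inherited from the weights $e^{f_0^n}$ and the symbol-pair distribution. The $\mathcal H$-norm of this symmetrized operator is controlled by the $\ell^2(G)$-norm of the convolution operator $M_\mu f(g) = \sum_h \mu(g^{-1}h) f(h)$. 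Day's theorem then gives $\mathrm{spr}_{\ell^2}(M_\mu) < 1$ precisely when $G$ is non-amenable, yielding $\log\mathrm{spr}_{\mathcal H}(\mathcal L_f) < P(f,\sigma)$.

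For the second (symmetric) statement, the ``only if'' direction is immediate from the first part combined with $\exp(P_{\mathrm{Gur}}) \leq \mathrm{spr}_{\mathcal H}(\mathcal L_f)$. For ``if'', I would prove the reverse inequality $\exp(P_{\mathrm{Gur}}(f,T_\psi)) \geq \mathrm{spr}_{\mathcal H}(\mathcal L_f)$ under the symmetry hypotheses. The involution $\kappa$, together with the condition $\psi(\kappa j,\kappa i) = \psi(i,j)^{-1}$ and weak symmetry of $f$, allows each preimage word of length $n$ to be paired with a ``time-reversed'' word whose $\psi^n$ value is the inverse; the subexponential constant $D_n$ ensures the paired weighted sums are comparable. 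Concatenating forward and reversed paths produces periodic orbits with $\psi^n = e$ in sufficient quantity to bound $P_{\mathrm{Gur}}$ below by $\log \mathrm{spr}_{\mathcal H}(\mathcal L_f)$, by arguments analogous to those of Pollicott--Sharp \cite{PollicottSharp94}. When $G$ is amenable, Day--Kesten yields $\mathrm{spr}_{\ell^2}(M_\mu) = 1$ and hence $\mathrm{spr}_{\mathcal H}(\mathcal L_f) = \exp(P(f,\sigma))$, so all three quantities agree.

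\textbf{Main obstacle.} The hardest step is the rigorous reduction of the $\mathcal H$-spectral radius of $\mathcal L_{f_0}$ to an $\ell^2(G)$-spectral radius. The transfer operator lives on a mixed Hölder/$G$-space, while the Kesten--Day criterion lives on $\ell^2(G)$; bridging them requires carefully chosen return-type operators and uniform control of Hölder norms under iteration, which is precisely the technical heart of Stadlbauer's Theorem 5.4. A secondary subtlety is that without symmetry of $\psi$ one cannot directly self-adjoint-ify the transfer operator, so the Cauchy--Schwarz symmetrization must be performed at the \emph{operator} level to produce a \emph{symmetric} random walk on $G$ to which Day's criterion can then be invoked.
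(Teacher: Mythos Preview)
The paper does not supply its own proof of this proposition. It is stated as a citation of Stadlbauer's results (Theorems 4.1, 5.4 and 5.6 of \cite{Stadlbauer13}), restricted to the case where the base is a finite-state subshift of finite type, and is used as a black box in the subsequent arguments. There is therefore nothing in the paper to compare your proposal against.

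That said, your outline is broadly faithful to the architecture of Stadlbauer's original arguments: the normalization via the RPF eigenfunction so that the base transfer operator becomes stochastic, the comparison of the $\mathcal H$-spectral radius with an $\ell^2(G)$ convolution operator, the appeal to Day's spectral-radius version of Kesten's criterion for the non-amenable direction, and the use of the involution $\kappa$ together with weak symmetry to pass from the spectral radius back down to the Gurevi\v{c} pressure in the symmetric case. Your identification of the ``main obstacle''---reducing from the mixed H\"older/$\ell^2$ space $\mathcal H$ to a pure $\ell^2(G)$ operator---is exactly the technical core of Stadlbauer's Theorem 5.4. One point worth sharpening: in Stadlbauer's actual argument the amenable direction (Theorem 4.1 in \cite{Stadlbauer13}) is proved not by showing $\exp(P_{\mathrm{Gur}}) \geq \mathrm{spr}_{\mathcal H}(\mathcal L_f)$ and then invoking Day, but rather by a direct F{\o}lner-set argument that produces enough periodic points with $\psi^n = e$; the symmetry hypothesis enters to compare forward and reversed weighted sums, and the estimate $P_{\mathrm{Gur}}(f,T_\psi) \ge P(f,\sigma)$ is obtained without passing through the spectral radius. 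Your concatenation idea is in the right spirit but the bookkeeping is more delicate than the sketch suggests.
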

 
 \begin{remark}
(i) In \cite{Stadlbauer13}, Stadlbauer considers skew products with $\psi$ depending on only one co-ordinate. However, replacing $S$ by $\mathcal W^2$, one can easily recover the above formulation.

\noindent
(ii) Setting $f=0$ immediately gives Theorem \ref{intro-gurevicentropy} for a symmetric extension.
 \end{remark}

In the absence of this symmetry, the answer becomes less clear.
In order to elaborate on this problem, we introduce 
the \emph{transfer operator} $\mathcal{L}_f$, defined pointwise by 
\[
\mathcal{L}_f v(x,g) = \sum_{\substack{y\in \Sigma: \\ \sigma y = x}} 
e^{f(y)}
v(y,g\psi(x)),
\]
for $v:\Sigma\times G \to \mathbb{C}$.  
In order to make use of the spectral properties of this operator, we restrict it to
the Banach space $\mathcal{H}$ of continuous functions $v$ for which $g\mapsto \|v(\cdot, g) \|_\infty$ is in 
$\ell^2(G)$,
with norm
\[
\|v\|_{\mathcal H} = \left(\sum_{g \in G} \|v(\cdot,g)\|_\infty^2\right)^{1/2}.
\]
If $\psi$ and $f$ satisfy the above symmetry conditions 
then
$P_{\mathrm{Gur}}(f,T_\psi)$ is equal to the logarithm of 
$\mathrm{spr}_{\mathcal{H}}(\mathcal{L}_f)$, the spectral radius of $\mathcal L_f : \mathcal H
\to \mathcal H$ \cite{Stadlbauer13}.
However, this equality does not hold in general and without symmetry we may have
$P_{\mathrm{Gur}}(f,T_\psi)< \log\mathrm{spr}_{\mathcal{H}}(\mathcal{L}_f)$.
On the other hand, 
there is a result of Jaerisch \cite{Jaerisch} that 
$\log \mathrm{spr}_{\mathcal{H}}(\mathcal{L}_f) = P(f,\sigma)$ if and only if $G$ is amenable,
and so it is clear that, when considering $P_{\mathrm{Gur}}(f,T_\psi)$, the pressure $P(f,\sigma)$ does not provide a useful comparison.
We shall show that, in fact, the natural comparison is between
$P_{\mathrm{Gur}}(f,T_\psi)$ and $P_{\mathrm{Gur}}(f,T_{\psi^{\mathrm{ab}}})$,
where $T_{\psi^{\mathrm{ab}}} : \Sigma \times G^{\mathrm{ab}} \to \Sigma \times G^{\mathrm{ab}}$
is the induced $G^{\mathrm{ab}}$-extension, where $G^{\mathrm{ab}} = G/[G,G]$ is the
abelianization of $G$. More precisely, if $\pi : G \to G^{\mathrm{ab}}$ is the natural projection then
$\psi^{\mathrm{ab}} = \pi \circ \psi$.

We now address the proof of Theorem \ref{intro-gurevicentropy}. One implication in the theorem is given by the next proposition. The other implication will follow from the more general result proved in the next section (Theorem \ref{intro-main2}).

\begin{proposition}
If $G$ is not amenable then $h_{\mathrm{Gur}}(T_\psi) <h_{\mathrm{Gur}}(T_{\psi^{\mathrm{ab}}})$.
\end{proposition}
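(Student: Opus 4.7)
The plan is to apply Stadlbauer's spectral inequality (the first part of Proposition \ref{stadlbauer}) not to $T_\psi$ as a $G$-extension of $\sigma$, but to $T_\psi$ \emph{re-viewed} as a $[G,G]$-extension of the intermediate skew product $T_{\psi^{\mathrm{ab}}}$. The point is that the obstruction to the strict inequality lives entirely in the non-abelian part of $G$.

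First I would observe that $G$ non-amenable forces $[G,G]$ non-amenable. Indeed, $G^{\mathrm{ab}}=G/[G,G]$ is abelian and therefore amenable; since amenability is preserved under group extensions, amenability of $[G,G]$ would force amenability of $G$, a contradiction. Next, the projection $\pi:G\to G^{\mathrm{ab}}$ induces a factor map $(x,g)\mapsto(x,\pi(g))$ that intertwines $T_\psi$ with $T_{\psi^{\mathrm{ab}}}$ and exhibits $T_\psi$ as a skew product over $T_{\psi^{\mathrm{ab}}}$ with fibre group $[G,G]$ acting freely by left multiplication on the second coordinate. Transitivity of the original $T_\psi$ on $\Sigma^+\times G$ is exactly transitivity of this new $[G,G]$-extension over $T_{\psi^{\mathrm{ab}}}$.

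Passing, if necessary, to a topologically mixing component of $T_{\psi^{\mathrm{ab}}}$ (using the decomposition into aperiodic components discussed in Section \ref{section:ssftge}), I would invoke the more general countable-state-base version of Proposition \ref{stadlbauer} noted in the remark preceding Remark (i) after it. Applied to this $[G,G]$-extension with $f=0$, this yields
\[
\log\mathrm{spr}(\mathcal L_0) \;<\; P_{\mathrm{Gur}}(0,T_{\psi^{\mathrm{ab}}}) \;=\; h_{\mathrm{Gur}}(T_{\psi^{\mathrm{ab}}}),
\]
where $\mathcal L_0$ is now the transfer operator of the $[G,G]$-extension acting on the appropriate $\ell^2([G,G])$-valued Banach space over the base $T_{\psi^{\mathrm{ab}}}$. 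The conclusion then follows by chaining with the elementary trace bound $\exp(h_{\mathrm{Gur}}(T_\psi))\le\mathrm{spr}(\mathcal L_0)$ recalled in the introduction, which remains valid for extensions of non-compact bases because the periodic points of $T_\psi$ lying above a periodic orbit of $T_{\psi^{\mathrm{ab}}}$ are exactly the closed $[G,G]$-loops and their count is bounded by the diagonal entries of $\mathcal L_0^n$.

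The main obstacle is the verification that Stadlbauer's machinery truly applies to the $[G,G]$-extension of the non-compact, countable-state base $T_{\psi^{\mathrm{ab}}}$: one must identify a mixing component of $T_{\psi^{\mathrm{ab}}}$ on which the transfer-operator formalism is available, confirm that the local finiteness hypothesis is preserved, and confirm that the restricted $[G,G]$-extension remains transitive so that the hypothesis of the general Stadlbauer theorem is genuinely satisfied. Once this is set up, the strict inequality is immediate from the two displayed bounds.
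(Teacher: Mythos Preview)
Your approach is genuinely different from the paper's, and it is worth spelling out the contrast. The paper never leaves the \emph{compact} base $\sigma:\Sigma^+\to\Sigma^+$. It invokes the Pollicott--Sharp analysis of $\mathbb Z^a$-extensions to write $h_{\mathrm{Gur}}(T_{\psi^{\mathrm{ab}}})=P(\langle\xi,\psi^{\mathrm{ab}}\rangle,\sigma)$ for a suitable $\xi\in\mathbb R^a$, then applies Stadlbauer's finite-state inequality to the potential $\langle\xi,\psi^{\mathrm{ab}}\rangle$ to get
\[
P_{\mathrm{Gur}}(\langle\xi,\psi^{\mathrm{ab}}\rangle,T_\psi)\le\log\mathrm{spr}_{\mathcal H}(\mathcal L_{\langle\xi,\psi^{\mathrm{ab}}\rangle})<P(\langle\xi,\psi^{\mathrm{ab}}\rangle,\sigma)=h_{\mathrm{Gur}}(T_{\psi^{\mathrm{ab}}}),
\]
and finally observes that the left-hand side equals $h_{\mathrm{Gur}}(T_\psi)$ because $\psi^n(x)=e$ forces $(\psi^{\mathrm{ab}})^n(x)=0$. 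There is a brief separate case when $G^{\mathrm{ab}}$ is finite. Thus the abelian part is absorbed into the potential rather than into the base.

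Your route---factoring $T_\psi$ as a $[G,G]$-extension of the countable-state shift $T_{\psi^{\mathrm{ab}}}$ and invoking Stadlbauer with $f=0$---is more conceptual and avoids the Pollicott--Sharp input and the case split. The cost is precisely the obstacle you name: when $G^{\mathrm{ab}}$ is infinite, the base $T_{\psi^{\mathrm{ab}}}$ has alphabet $S\times G^{\mathrm{ab}}$ and its image and preimage sets are translates that cannot be trapped in any fixed finite set, so the big-images-and-preimages property fails. You therefore have to check exactly which standing hypotheses of Stadlbauer's countable-state theorem are actually used in the non-amenable direction, set up the transfer operator on an appropriate function space over the non-compact base, and verify the trace bound $\exp h_{\mathrm{Gur}}(T_\psi)\le\mathrm{spr}(\mathcal L_0)$ in that setting. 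None of this looks fatal, but it is real work; the paper's choice of potential is designed to bypass it, at the price of importing the structure theory of abelian extensions.
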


\begin{proof}
We have $G^{\mathrm{ab}} = \mathbb Z^a \times G_0$, for some $a \ge 0$, where $G_0$ is a 
finite abelian group.
First suppose that $a>0$. The system $T_{\psi^{\mathrm{ab}}} : \Sigma^+ \times G^{\mathrm{ab}}
\to \Sigma^+ \times G^{\mathrm{ab}}$ induces a system on $\Sigma^+ \times \mathbb Z^a$, which we will still 
denote by $T_{\psi^{\mathrm{ab}}}$ and which has the same Gurevi\v{c} entropy.
Following the analysis of \cite{PollicottSharp94},
$h_{\mathrm{Gur}}(T_{\psi^{\mathrm{ab}}}) = P((\langle \xi,\psi^{\mathrm{ab}}\rangle,\sigma)$, for
some $\xi \in \mathbb R^a$.
Since $G$ is not amenable, the first part of Proposition \ref{stadlbauer} tells us that
\[
\log \mathrm{spr}_{\mathcal H}(\mathcal L_{\langle \xi,\psi^{\mathrm{ab}}\rangle}) <
P(\langle \xi,\psi^{\mathrm{ab}}\rangle,\sigma) =h_{\mathrm{Gur}}(T_{\psi^{\mathrm{ab}}}).
\]
and that, for any H\"older continuous $f : \Sigma^+ \to \mathbb R$,
$P_{\mathrm{Gur}}(f,T_\psi)) \le \log \mathrm{spr}_{\mathcal H}(\mathcal L_f)$.
Combining these statements gives that
\[
P_{\mathrm{Gur}}( \langle \xi,\psi^{\mathrm{ab}}\rangle ,T_\psi)) <h_{\mathrm{Gur}}(T_{\psi^{\mathrm{ab}}}).
\]
However, since $\psi^n(x)=e$ implies that $(\psi^{\mathrm{ab}})^n=0$,
\begin{align*}
P_{\mathrm{Gur}}( \langle \xi,\psi^{\mathrm{ab}}\rangle ,T_\psi))
&=
\limsup_{n \to \infty} \frac{1}{n} \log 
\sum_{\substack{\sigma^n x=x \\ \psi^n(x)=e}}
e^{\langle \xi,(\psi^{\mathrm{ab}})^n(x)\rangle}
\\
&= \limsup_{n \to \infty} \frac{1}{n} \log
\#\{x \in \Sigma^+ \hbox{ : } \sigma^n x=x, \ (\psi^{\mathrm{ab}})^n=0\}
=h_{\mathrm{Gur}}(T_\psi),
\end{align*}
giving the required strict inequality.

Now suppose that $a=0$, so that $G^{\mathrm{ab}}$ is finite. Then
$h_{\mathrm{Gur}}(T_{\psi^{\mathrm{ab}}}) = h(\sigma)$. As above, we have
\[
h_{\mathrm{Gur}}(T_\psi) \leq \log \mathrm{spr}_{\mathcal H}(\mathcal L_0)
< P(0,\sigma) = h(\sigma),
\]
completing the proof.
\end{proof}

\section{Gurevi\v{c} Pressure for Amenable Extensions} \label{amenabilityandpressure}

The purpose of this section is to prove the following result. Setting $f=0$ will complete the proof of Theorem
\ref{intro-gurevicentropy}. To simplify notation, for a function $f : \Sigma^+ \to \mathbb R$,
we will use $f$ to denote the induced functions on the group extensions $\Sigma^+ \times G$ and
$\Sigma^+ \times G^{\mathrm{ab}}$ (i.e. $f(x,g) =f(x)$ for all group elements $g$).

\begin{theorem}\label{intro-main2}
Assume that $T_\psi$ is transitive.
Let $f : \Sigma^+ \to \mathbb R$ be H\"older continuous.
If $G$ is amenable then
\[
P_{\mathrm{Gur}}(f,T_\psi) = P_{\mathrm{Gur}}(f,T_{\psi^{\mathrm{ab}}}) .
\]
\end{theorem}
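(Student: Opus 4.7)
The easy direction $P_{\mathrm{Gur}}(f,T_\psi)\leq P_{\mathrm{Gur}}(f,T_{\psi^{\mathrm{ab}}})$ is immediate from the definitions, since $\psi^n(x)=e$ forces $(\psi^{\mathrm{ab}})^n(x)=0$, so every periodic $\sigma$-point contributing to the first sum also contributes, with the same $e^{f^n(x)}$-weight, to the second. The plan for the reverse inequality is to reduce, by a Legendre-dual twist on the abelian quotient, to a single pressure inequality for a cleverly chosen potential, and then to exploit amenability of $G$ through a concatenation construction for periodic orbits.

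Concretely, writing $G^{\mathrm{ab}}\cong\mathbb{Z}^a\oplus F$ with $F$ finite, the symbolic analogue of Proposition~\ref{asymptotics_abeliancover} produces $\xi^*\in\mathbb{R}^a$ with $P_{\mathrm{Gur}}(f,T_{\psi^{\mathrm{ab}}})=P(\tilde f,\sigma)$, where $\tilde f:=f+\langle\xi^*,\psi^{\mathrm{ab}}\rangle$ (reading $\psi^{\mathrm{ab}}$ through the projection to $\mathbb{Z}^a$), and such that the equilibrium state $m^*$ of $\tilde f$ on $\Sigma^+$ satisfies $\int\psi^{\mathrm{ab}}\,dm^*=0$. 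Since $\psi^n(x)=e$ forces $\langle\xi^*,(\psi^{\mathrm{ab}})^n(x)\rangle=0$, the twist $f\mapsto\tilde f$ changes neither Gurevi\v{c} pressure, so the theorem reduces to showing
\[
P_{\mathrm{Gur}}(\tilde f,T_\psi)\ \geq\ P(\tilde f,\sigma).
\]
To establish this I would use the Gibbs property of $m^*$ to produce, at each length $n$, a large family of periodic $\sigma$-orbits whose $\tilde f^n$-weights sum to $\sim e^{nP(\tilde f,\sigma)}$, and then concatenate several such orbits, inserting short bridging paths provided by mixing of $\sigma$ and transitivity of $T_\psi$, into longer periodic orbits of $T_\psi$. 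The zero-drift condition $\int\psi^{\mathrm{ab}}\,dm^*=0$ means that the abelianised cocycle of a ``typical'' block already grows only sublinearly, so returning to $0\in G^{\mathrm{ab}}$ costs only a polynomial (hence subexponential) factor via a local limit theorem. Conditional on this, the residual cocycle lies in $N=[G,G]$, which is amenable as a subgroup of the amenable group $G$; Kesten's criterion, applied to the induced random walk on $G$ (equivalently, F\o lner sets in $G$), shows that forcing the full cocycle to $e$ costs only another subexponential factor.

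The main obstacle is precisely this last step. Stadlbauer's symmetric-case argument forces cancellation by pairing a word $w$ with a formal time-reversal $w^-$ satisfying $\psi(w^-)=\psi(w)^{-1}$; without a symmetry hypothesis this is unavailable. I would replace it by an averaging argument directly on the transfer operator $\mathcal{L}_{\tilde f}$ acting on $\mathcal{H}$: by Stadlbauer's spectral radius result (Proposition~\ref{stadlbauer}), $\log\mathrm{spr}_{\mathcal{H}}(\mathcal{L}_{\tilde f})=P(\tilde f,\sigma)$ when $G$ is amenable, and testing the iterates $\mathcal{L}_{\tilde f}^n$ against F\o lner test-vectors in $\ell^2(G)$, together with positivity and the Gibbs bounds for $m^*$, should extract the matching lower bound on $P_{\mathrm{Gur}}(\tilde f,T_\psi)$. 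Keeping track of the uniform Gibbs bounds, the lengths and weights of the bridging paths, and the F\o lner errors, so that nothing beyond a subexponential factor is lost in the weighted count of periodic orbits with cocycle value $e$, is the principal technical content of the proof.
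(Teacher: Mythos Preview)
Your reduction via the twist $\tilde f=f+\langle\xi^*,\psi^{\mathrm{ab}}\rangle$ is correct and useful: since $\psi^n(x)=e$ forces $(\psi^{\mathrm{ab}})^n(x)=0$, the twist leaves both Gurevi\v{c} pressures unchanged, so the theorem is indeed equivalent to $P_{\mathrm{Gur}}(\tilde f,T_\psi)\ge P(\tilde f,\sigma)$ for this particular $\tilde f$.

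The genuine gap is the final step. Kesten's criterion in the form you invoke---subexponential decay of return probabilities---requires a \emph{symmetric} driving measure, and the random walk on $G$ (or on $[G,G]$) induced by the Gibbs measure $m^*$ has no reason to be symmetric. Zero drift on $G^{\mathrm{ab}}$ does not control the ``commutator part'': already for the Heisenberg group one can have zero abelianised drift yet an asymmetric central cocycle, and your argument gives no mechanism to rule out an exponential cost for returning to $e$. Your alternative route through Stadlbauer's spectral radius identity has the same defect: for amenable $G$ one indeed has $\log\mathrm{spr}_{\mathcal H}(\mathcal L_{\tilde f})=P(\tilde f,\sigma)$, but the paper explicitly warns (Section~\ref{section:gege}) that without symmetry one may have $P_{\mathrm{Gur}}(f,T_\psi)<\log\mathrm{spr}_{\mathcal H}(\mathcal L_f)$. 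Testing $\mathcal L_{\tilde f}^n$ against F{\o}lner vectors recovers the spectral radius, not the weighted periodic-orbit count at $e$; passing from one to the other is precisely the difficulty, and ``positivity and Gibbs bounds'' do not bridge it.

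The paper's proof is structurally different. It builds, following Stadlbauer, a family of Patterson--Sullivan type measures $\nu_{\eta,g}$ on $\Sigma^+\times G$ at the critical parameter $\rho=e^{P_{\mathrm{Gur}}(f,T_\psi)}$, satisfying an eigen-relation $\rho\,\nu_{\eta,g}=\sum_{u}e^{f(u\eta)}\nu_{u\eta,g\psi(u)}$. The amenability of $G$ is used \emph{only} through the existence of a Banach mean $\mathfrak M$ (Roblin's trick): one sets
\[
\chi(a)=\mathfrak M\Bigl[g\mapsto\log\frac{\nu_{o,ga}(v)}{\nu_{o,g}(v)}\Bigr],
\]
checks via right-invariance of $\mathfrak M$ that $\chi:G\to\mathbb R$ is a genuine homomorphism, and hence that $\chi$ factors through $G^{\mathrm{ab}}$. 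Jensen's inequality for $\mathfrak M$ then converts the eigen-relation into
\[
\sum_{\substack{\sigma^n x=x\\(\psi^{\mathrm{ab}})^n(x)=0}} e^{f^n(x)}
=\sum_{\substack{\sigma^n x=x\\(\psi^{\mathrm{ab}})^n(x)=0}} e^{\chi(\psi^n(x))}e^{f^n(x)}
\le C\,\rho^{\,n+N+1},
\]
which is exactly $P_{\mathrm{Gur}}(f,T_{\psi^{\mathrm{ab}}})\le P_{\mathrm{Gur}}(f,T_\psi)$. No symmetry, no F{\o}lner estimate, and no local limit theorem enter; the whole asymmetric difficulty is absorbed into the single fact that a homomorphism to $\mathbb R$ kills $[G,G]$.
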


The proof is inspired by Roblin's proof in \cite{Roblin} that if $\Gamma$ is a convex co-compact group 
of isometries of a $\mathrm{CAT}(-1)$ space and $\Gamma'$ is a normal subgroup such that
$\Gamma/\Gamma'$ is amenable then the critical exponents of $\Gamma$ and $\Gamma'$ are equal.
We will make use of a family of $\sigma$-finite measures $\nu_{\eta,g}$, indexed by $\Sigma^+ \times G$,
introduced by Stadlbauer \cite{Stadlbauer18}. For $t >0$, write
\[
\mathcal{P}(t) = \sum_{n\in\mathbb{N}} t^{-n} b_n \sum_{\substack{y\in \Sigma : \\ \sigma^n y=o}} e^{f^n(y)} \mathds{1}_{\Sigma^+ \times \{e\}}(T_\psi^n((y,e))
\]
for a chosen distinguished $o\in\Sigma^+$, and
where 
$b_n$ is a slowly diverging sequence chosen so that $\mathcal{P}(t)$ diverges at its 
radius of convergence. 
More precisely, if the terms $b_n$ are omitted then it is clear that the resulting series
converges for $t > e^{P_{\mathrm{Gur}}(f, T_\psi)}$ and diverges for $t < e^{P_{\mathrm{Gur}}(f, T_\psi)}$.
It is then possible to choose a non-decreasing sequence $b_n \ge1$ such that 
$\lim_{n \to \infty} b_n/b_{n+1} =1$, $\mathcal P(t)$ has radius of convergence
$e^{P_{\mathrm{Gur}}(f, T_\psi)}$ and diverges at $t= e^{P_{\mathrm{Gur}}(f, T_\psi)}$
(Lemma 3.1 of \cite{DenkerUrbanski}). 
For the rest of the section, we shall write $\rho:= e^{P_{\mathrm{Gur}}(f, T_\psi)}$.

For $t> \rho$, $\eta \in \Sigma$ 
and $g \in G$, define a measure $\nu^t_{\xi,g}$ on $\Sigma^+ \times G$ by the formula
\[
\nu^t_{\eta,g}(v) := 
\int_{\Sigma^+ \times G} v \, d\nu^t_{\eta,g}
= \frac{1}{\mathcal{P}(t)}\sum_{n\in\mathbb{N}} t^{-n} b_n
\sum_{\substack{z\in \Sigma^+ \times G: T_\psi^n(z)=(\eta, g)}} e^{f^n(z)}v(z),
\]
for each continuous function $v:\Sigma^+ \times G\to \mathbb{R}$.
Now let $t \to \rho+$, and choose a weak limit $\nu_{\eta,g}$. 
We can do this because each $\Sigma^+ \times \{g\}$ is compact, and 
$G$ is countable. 
We can also ensure that there is a countable dense subset of $\eta$ for which the limit is 
attained along the same subsequence. In Theorem 5.1 of \cite{Stadlbauer18}, it is shown how to extend this to all 
$\eta \in \Sigma^+$ using H\"{o}lder continuity. For the proof of our Theorem \ref{intro-main2}, we will only use the countable collection of points $\eta = wo$, for 
$w\in \bigcup_{n\in\mathbb{N}}\mathcal{W}^n_{o}$, 
where
$o$ is the chosen distinguished element of $\Sigma^+$
and $\mathcal W_o^n$ denotes the set of elements $w$ of $\mathcal W^n$ for which $wo \in \Sigma^+$.

The following constants will frequently appear:
\[
B_f = \inf_{z \in \Sigma^+} e^{f(z)} 
\quad \mathrm{and}
\quad
C_f = \exp \left(\frac{|f|_\alpha}{1-2^{-\alpha}}\right),
\]
where $\alpha>0$ is the H\"older exponent of $f$ and $|f|_\alpha$ is the $\alpha$-H\"older
seminorm of $f$.

\begin{lemma}\label{easyinequality}

\noindent 
(i) There exists $C>0$ such that, for any non-negative continuous function $v : \Sigma^+ \times G \to \mathbb R$ and any $(\eta,g),(\xi,h) \in \Sigma^+ \times G$ satisfying
$T^k_\psi(\eta,g)=(\xi,h)$, we have
\[
\nu_{\eta,g}(v)\ge C^k\nu_{\xi,h}(v).
\]
\item

\noindent
(ii)
For any non-negative continuous function $v : \Sigma^+ \times G \to \mathbb R$ 
and any $\eta,\xi \in \Sigma^+$ belonging to the same cylinder of length $1$, we have
\[
\nu_{\eta,g}(v)\ge C_f^{-1} \nu_{\xi,g}(v).
\]
\end{lemma}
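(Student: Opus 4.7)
The plan is to work with the approximating measures $\nu_{\eta,g}^t$ for $t>\rho$ and pass to the limit $t\to\rho^+$ at the end, using the slow divergence of $\mathcal{P}(t)$ to absorb finitely many exceptional small-$n$ terms and the fact that $b_{n+k}/b_n\to 1$ to handle the index shift $n \mapsto n+k$. The central tool is the stratification of preimages of $(\xi,h)$ under $T_\psi^{n+k}$ according to the block of symbols they occupy at positions $n,\ldots,n+k-1$, giving the identity
\[
(\mathcal{L}_f^{n+k}v)(\xi,h) = \sum_{w \in \mathcal{W}^k(\cdot,\xi_0)} e^{f^k(w\xi)}\,(\mathcal{L}_f^n v)\!\left(w\xi,\;h\psi^k(w\xi)^{-1}\right);
\]
when $T_\psi^k(\eta,g)=(\xi,h)$, the word $w=(\eta_0,\ldots,\eta_{k-1})$ gives exactly the distinguished term $e^{f^k(\eta)}(\mathcal{L}_f^n v)(\eta,g)$, and all other terms are non-negative.

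For (ii), assume $\eta_0=\xi_0$ and set up the bijection between preimages of $\eta$ and $\xi$ under $\sigma^n$ obtained by swapping the tail: $y=(y_0,\ldots,y_{n-1})\cdot\eta \leftrightarrow y'=(y_0,\ldots,y_{n-1})\cdot\xi$. Since $\eta_0=\xi_0$, the sequences $y,y'$ agree on their first $n+1$ coordinates, so $\psi^n(y)=\psi^n(y')$ (because $\psi$ depends only on two consecutive symbols), and by H\"older continuity $|f^n(y)-f^n(y')|\le |f|_\alpha/(1-2^{-\alpha})$, whence $e^{f^n(y)}\ge C_f^{-1}e^{f^n(y')}$. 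For a cylinder function $v$, $v(y,\cdot)=v(y',\cdot)$ once $n$ exceeds the cylinder depth, so summing the bijection yields $(\mathcal{L}_f^n v)(\eta,g)\ge C_f^{-1}(\mathcal{L}_f^n v)(\xi,g)$ for all sufficiently large $n$. Integrating against $b_n t^{-n}/\mathcal{P}(t)$ and letting $t\to\rho^+$ (finitely many small-$n$ terms are negligible since $\mathcal{P}(t)\to\infty$) gives (ii) for cylinder $v$; density of cylinder functions among non-negative continuous functions finishes the proof.

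For (i), the displayed identity immediately yields the easy inequality $\nu_{\xi,h}(v)\ge \rho^{-k}e^{f^k(\eta)}\nu_{\eta,g}(v)$ by retaining only the distinguished term, but (i) is the \emph{reverse} statement, which demands a uniform bound of the entire right-hand sum by a constant (exponential in $k$) multiple of the distinguished term. The plan is to show that for each $w\in\mathcal{W}^k(\cdot,\xi_0)$ one has
\[
(\mathcal{L}_f^n v)(w\xi,\,g_w)\le D^k(\mathcal{L}_f^n v)(\eta,g),
\]
where $g_w=h\psi^k(w\xi)^{-1}$. Since $w\xi$ and $\eta$ agree on coordinates $[k,\infty)$ and differ only on the first $k$ coordinates, this should be obtained by iterating the bijection-with-H\"older-control argument from (ii) one coordinate at a time, carefully compensating at each swap for the mismatched $G$-coordinate via the cocycle identity for $\psi$. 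Summing over the at most $|S|^k$ admissible words $w$ and absorbing $e^{f^k(w\xi)}\le e^{k\|f\|_\infty}$ then yields $(\mathcal{L}_f^{n+k}v)(\xi,h)\le (|S|D\,e^{\|f\|_\infty})^k(\mathcal{L}_f^n v)(\eta,g)$; integrating with the $b_n t^{-n}/\mathcal{P}(t)$ weights (using $b_{n+k}/b_n \to 1$ to convert the index shift) gives $\nu_{\eta,g}(v)\ge C^k\nu_{\xi,h}(v)$ with $C=\rho/(|S|D\,e^{\|f\|_\infty})>0$.

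The main obstacle is the uniform Harnack-type comparison across blocks of length $k$: while (ii) handles a single swap within a matched $G$-coordinate very cleanly, extending it to $k$ simultaneous swaps where both the $\Sigma^+$-coordinate and the $G$-label change requires a careful inductive argument that uses the Markov structure of $\Sigma^+$ together with the multiplicative cocycle identity for $\psi^k$ to propagate the bound without loss, uniformly over all admissible $w$.
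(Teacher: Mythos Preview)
Your treatment of part (ii) is correct and is exactly the paper's argument: the bijection $w\eta\leftrightarrow w\xi$ between level-$n$ preimages when $\eta_0=\xi_0$, the H\"older estimate $|f^n(w\eta)-f^n(w\xi)|\le\log C_f$, the fact that $\psi^n(w\eta)=\psi^n(w\xi)$ since $\psi$ sees only two coordinates, and the divergence $\mathcal P(t)\to\infty$ to kill the finitely many small-$n$ terms.

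For part (i) you have been misled by what is a labelling slip in the paper. The paper's proof of (i) is \emph{nothing more} than what you call the ``easy direction'': one restricts the sum over level-$n$ preimages of the forward point to those that factor through the backward point at level $n-k$, picking up the multiplicative weight $t^{-k}e^{f^k(\cdot)}\,b_n/b_{n-k}$. If you run the paper's displayed chain of inequalities under the hypothesis $T_\psi^k(\xi,h)=(\eta,g)$ rather than the stated $T_\psi^k(\eta,g)=(\xi,h)$, every step is correct and yields $\nu_{\eta,g}\ge C^k\nu_{\xi,h}$ directly. There is no Harnack-type step in the paper at all; the argument is a one-line inclusion of preimages.

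The Harnack route you propose for the literally stated direction has a genuine gap. The key comparison $(\mathcal L_f^n v)(w\xi,g_w)\le D^k(\mathcal L_f^n v)(\eta,g)$ cannot be obtained by iterating the bijection from (ii): that bijection requires the two base points to share their first symbol, so that the admissible prefixes $u$ coincide and $\psi^n$ agrees on the paired preimages $u\eta$, $u\xi$. For a general $w\in\mathcal W^k$ one has $w_0\ne\eta_0$, so the correspondence $u(w\xi)\mapsto u\eta$ is not even defined. Concretely, take $v=\mathds 1_{\Sigma^+\times\{e\}}$: then $(\mathcal L_f^n v)(w\xi,g_w)$ is a weighted count of level-$n$ preimages $z$ of $w\xi$ with $\psi^n(z)=g_w$, while $(\mathcal L_f^n v)(\eta,g)$ counts those with $\psi^n(z)=g$; since $g_w\ne g$ in general, a uniform-in-$n$ comparison between these two quantities is exactly the comparability of the measures $\nu_{\cdot,\cdot}$ that the whole lemma is meant to supply, so this step is circular.
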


\begin{proof}
Let $v : \Sigma^+ \times G \to \mathbb R$ be an indicator function on some cylinder. It will be sufficient to prove the lemma for functions of this form, the general non-negative, continuous case follows by approximating by linear combinations of indicator functions.

We proceed with part (i). Let $(\eta,g),(\xi,h) \in \Sigma^+ \times G$ with $T^k_\psi(\eta,g)=(\xi,h)$. 
For $t > \rho$ we have,
\begin{align*}
&\nu^t_{\eta,g}(v) 
= 
\frac{1}{\mathcal{P}(t)}\sum_{n=1}^\infty t^{-n}b_n\sum_{w\in \mathcal{W}_\eta^n } 
e^{f^n(w\eta)} v(w\eta, g\psi^n(w)) \\
&\ge
\frac{1}{\mathcal{P}(t)}\sum_{n= k+1}^\infty t^{-k}t^{-(n-k)}\frac{b_n}{b_{n-k}}b_{n-k}
\sum_{u\in \mathcal{W}_\xi^{n-k}   } 
e^{f^{k}(\xi)}e^{f^{n-k}(u\xi)} v(u\xi, h\psi^{n-k}(u)) \\
&\ge
\left(\sup_{n \in \mathbb{N}} \frac{b_n}{b_{n-1}}\right)^k
B_f^k t^{-k} \frac{1}{\mathcal{P}(t)}\sum_{m=1}^\infty t^{-{m}}b_m
\sum_{u\in \mathcal{W}_\xi^{m}} e^{f^{m}(u\xi)} v(u\xi, h\psi^{m}(u)).
\end{align*}
Taking weak limits 
as $t \to \rho+$ gives the conclusion (with $C=B_f(\sup_{n\in\mathbb{N}} b_n/b_{n-1})$).

Now, for part (ii) we assume that $\eta,\xi$ belong to the same cylinder. Recall that $v$ is assumed to be an indicator function, we write $v=\mathds{1}_{[u]}$ for some $u$, and let $k$ be the length of $u$.
For $t > \rho$ we have,
\begin{align*}
\nu^t_{\eta,g}(v) &= \frac{1}{\mathcal{P}(t)}\sum_{n=1}^\infty t^{-n}b_n
\sum_{w\in \mathcal{W}_\eta^n} e^{f^n(w\eta)} v(w\eta, g\psi^n(w)) \\
&=
\frac{1}{\mathcal{P}(t)}\sum_{n=1}^\infty t^{-n}b_n
\sum_{w\in \mathcal{W}_\xi^n} e^{f^n(w\eta)-f^n(w\xi)}
e^{f^n(w\xi)} v(w\eta, g\psi^n(w)) \\
&\ge
\frac{1}{\mathcal{P}(t)}\sum_{n= k}^\infty t^{-n}b_n
\sum_{w\in \mathcal{W}_\xi^n} C_f^{-1} e^{f^n(w\xi)} v(w\xi, g\psi^n(w)) \\
&=
C_f^{-1} \frac{1}{\mathcal{P}(t)}\sum_{m=1}^\infty t^{-m}b_m
\sum_{w\in \mathcal{W}_\xi^m}  e^{f^m(w\xi)} v(w\xi, g\psi^m(w)) \\
&-
\frac{1}{\mathcal{P}(t)}\sum_{i=1}^k t^{-i}b_i
\sum_{w\in \mathcal{W}_\xi^i} C_f^{-1} e^{f^i(w\xi)} v(w\xi, g\psi^i(w)) \\
\end{align*}
Since $\mathcal{P}(t)\to \infty$, as $t \to \rho+$, 
it follows that,
\[
\nu^t_{\eta,g}(v) \ge C_f^{-1} \nu^t_{\xi,g}(v)
\]
as required.
\end{proof}

\begin{lemma} \label{itsbounded}
Let $v$ be a non-negative continuous function which is strictly positive on $\Sigma^+\times\left\{ e \right\}$.
If $T_\psi$ is transitive then $\nu_{z,g}(v)>0$. Furthermore, for each $a \in G$, we have
\[
\sup_{g\in G} \frac{\nu_{z,ga}(v)}{\nu_{z,g}(v)}<\infty.
\]

\end{lemma}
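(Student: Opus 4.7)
The plan is to derive both conclusions from the comparison inequalities of Lemma \ref{easyinequality} combined with transitivity of $T_\psi$, using the normalisation of $\nu_{o,e}$ as a source of positivity.

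For the assertion $\nu_{z,g}(v)>0$, first observe that the definition of $\mathcal{P}(t)$ is precisely what makes $\nu^t_{o,e}(\mathds{1}_{\Sigma^+\times\{e\}})=1$ for every $t>\rho$. Since $\{e\}$ is clopen in $G$, this indicator is continuous, so in the weak limit $\nu_{o,e}(\Sigma^+\times\{e\})=1$. Compactness of $\Sigma^+$, continuity of $v$, and strict positivity of $v$ on $\Sigma^+\times\{e\}$ then give $c:=\min_{\Sigma^+\times\{e\}} v>0$, whence $\nu_{o,e}(v)\ge c>0$. To transfer positivity to an arbitrary $(z,g)$, I would appeal to the fact that topological transitivity of $T_\psi$ is equivalent to irreducibility of its transition matrix $\widetilde{A}$ on $S\times G$. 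Applied to the pair $(z_0,e),(o_0,g^{-1})$, this produces a word $W=W_0W_1\cdots W_{k-1}$ with $W_0=z_0$, $A(W_{k-1},o_0)=1$, and $\psi^k(Wo)=g^{-1}$, so that $z':=Wo\in[z_0]$ and $T_\psi^k(z',g)=(o,e)$. Lemma \ref{easyinequality}(i) gives $\nu_{z',g}(v)\ge C^k\nu_{o,e}(v)\ge C^kc$, and Lemma \ref{easyinequality}(ii) applied to the pair $z,z'\in[z_0]$ yields $\nu_{z,g}(v)\ge C_f^{-1}\nu_{z',g}(v)>0$.

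For the uniform ratio bound, the crucial point is that the connecting word can be chosen independently of $g$. Irreducibility of $\widetilde{A}$, applied to the pair $(z_0,e),(z_0,a)$, produces a word $W=W_0\cdots W_{k-1}$ with $W_0=z_0$, $A(W_{k-1},z_0)=1$, and $\psi^k(Wz)=a$; both the length $k$ and the word $W$ depend only on $a$ and $z$. Then $Wz\in[z_0]$ and $T_\psi^k(Wz,g)=(z,ga)$ for every $g\in G$. Lemma \ref{easyinequality}(i) gives $\nu_{Wz,g}(v)\ge C^k\nu_{z,ga}(v)$, while Lemma \ref{easyinequality}(ii) applied with the roles of $\eta,\xi$ interchanged, to $z,Wz\in[z_0]$, gives $\nu_{Wz,g}(v)\le C_f\nu_{z,g}(v)$. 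Combining these inequalities yields $\nu_{z,ga}(v)\le C^{-k}C_f\,\nu_{z,g}(v)$, and since $C^{-k}C_f$ is independent of $g$ the desired supremum bound follows; the first part of the lemma ensures $\nu_{z,g}(v)>0$, so the ratio is well-defined.

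The main technical point is the translation of topological transitivity of $T_\psi$ into the combinatorial statement that arbitrary $\psi$-values between prescribed symbols of $S$ can be realised by some admissible word. This is exactly the equivalence between topological transitivity and irreducibility of the locally finite matrix $\widetilde{A}$ recalled in Section \ref{section:ssftge}, and it is the only input beyond Lemma \ref{easyinequality}.
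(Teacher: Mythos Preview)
Your proof is correct and follows essentially the same strategy as the paper's: both arguments use transitivity of $T_\psi$ (equivalently, irreducibility of $\widetilde A$) to produce connecting orbit segments, then apply Lemma \ref{easyinequality}(i) along those segments and Lemma \ref{easyinequality}(ii) within cylinders, anchoring everything to the normalisation $\nu_{o,e}(\Sigma^+\times\{e\})=1$. Your presentation is in fact slightly clearer on the crucial point for the supremum bound: you make explicit that the connecting word $W$ with $\psi^k(Wz)=a$ depends only on $a$ and $z$, not on $g$, which is exactly why the constant $C^{-k}C_f$ is uniform in $g$; the paper's proof has this implicit in the relation $T_\psi^{k_1}(\eta,g)=(z,ga)$, since it forces $\psi^{k_1}(\eta)=a$ independently of $g$.
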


\begin{proof}
Let $g,a\in G$ be arbitrary. We write $z_0$ for the first letter of $z \in \Sigma^+$.

If $T_\psi$ is transitive then there are $\eta\in [z_0]$, $\xi\in [o]$ and $k_1,k_2 \ge 0$ such that 
$T^{k_1}_\psi(\eta,g) = (z,ga)$ and $T^{k_2}_\psi(z,ga) = (\xi,e)$. 
Then, by Lemma \ref{easyinequality}, we have that 
\[
\nu_{\eta,g}(v) \ge C^{k_1}\nu_{z,ga}(v)\ge C^{k_1}C^{k_2}\nu_{\xi,e}(v).
\]
Since $v$ is bounded from below away from zero on $\Sigma^+\times\left\{ e \right\}$, there is $c>0$ with
$\nu_{\xi,e}(v)\ge c\nu_{\xi,e}(\Sigma^+\times\left\{ e \right\})$. Then, $\nu_{\xi,e}(\Sigma^+\times\left\{ e \right\})\ge C_f^{-1} \nu_{o,e}(\Sigma^+\times\left\{ e \right\})$, and by construction $\nu_{o,e}(\Sigma^+\times\left\{ e \right\})=1$.
This gives the conclusion that $\nu_{z,g}(v)>0$ for all $g\in G$.

To obtain the final statement, we note that
\[
\frac{\nu_{z,ga}(v)}{\nu_{\eta,g}(v)}\le C^{-k_1}
\]
Moreover, since $\eta,z$ are in the same cylinder, the second part of Lemma \ref{easyinequality} gives that
\[
\frac{\nu_{z,ga}(v)}{\nu_{z,g}(v)} \le C^{-k_1}C_f.
\]
\end{proof}

\begin{lemma}\label{almosteig}
For any continuous function $v : \Sigma^+ \times G \to \mathbb R$ and any $g,a \in G$, we have
that
\[
\rho \nu_{\eta,g}(v) =  \sum_{u\in\mathcal{W}_\eta^1}  e^{f(u\eta)}\nu_{u\eta,g\psi(u)}(v).
\]
\end{lemma}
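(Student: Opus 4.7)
The plan is to compute both sides using the defining formula for the pre-measures $\nu^t_{\eta,g}$ and then pass to the limit $t \to \rho+$ along the subsequence already chosen for the weak limits. Structurally this is the same kind of argument one uses to show that a Patterson--Sullivan conformal density is quasi-invariant under the dual of the transfer operator.

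First I would substitute the defining sum for each $\nu^t_{u\eta, g\psi(u)}(v)$ appearing on the right-hand side and observe that the pairs $(u,w')$ with $u \in \mathcal{W}_\eta^1$ and $w' \in \mathcal{W}_{u\eta}^m$ are in bijection with words $w \in \mathcal{W}_\eta^{m+1}$ via the concatenation $w = w'u$, under which $w\eta = w'(u\eta)$. The cocycle identities
\[
f^{m+1}(w\eta) = f^m(w'(u\eta)) + f(u\eta), \qquad \psi^{m+1}(w) = \psi(u)\,\psi^m(w')
\]
(consistent with the notational convention used in the proof of Lemma \ref{easyinequality}) then reassemble the integrand on the right exactly as the integrand appearing in $\nu^t_{\eta,g}$ at level $n = m+1$. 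After the reindexing $n = m+1$ and pulling a factor of $t$ out of $t^{-m} = t \cdot t^{-n}$ one obtains
\[
\sum_{u \in \mathcal{W}_\eta^1} e^{f(u\eta)} \nu^t_{u\eta, g\psi(u)}(v) = t \cdot \frac{1}{\mathcal{P}(t)} \sum_{n \ge 2} t^{-n} b_{n-1} \sum_{w \in \mathcal{W}_\eta^n} e^{f^n(w\eta)} v(w\eta, g\psi^n(w)),
\]
which differs from $t\,\nu^t_{\eta,g}(v)$ only in the absence of the $n=1$ term and in having $b_{n-1}$ in place of $b_n$.

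Next I would argue that both discrepancies are negligible as $t \to \rho+$. The missing $n=1$ term is bounded by a constant times $1/\mathcal{P}(t)$, and hence tends to $0$ because $\mathcal{P}$ diverges at $\rho$ by construction of the $b_n$. For the weight discrepancy, write $b_n - b_{n-1} = b_n\bigl(1 - b_{n-1}/b_n\bigr)$; given $\epsilon > 0$, choose $N$ so large that $1 - b_{n-1}/b_n < \epsilon$ for all $n \ge N$. After reducing to non-negative $v$ by splitting into positive and negative parts (or, for indicator functions, directly), the $n \ge N$ tail of the discrepancy is dominated by $\epsilon \cdot t\,\nu^t_{\eta,g}(v)$, which remains bounded as $t \to \rho+$ because $\nu^t_{\eta,g}(v)$ converges weakly; the finite head $n < N$ contributes only $O(1/\mathcal{P}(t))$.

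Finally, since $\mathcal{W}_\eta^1$ is finite, I can refine the subsequence so that $\nu^t_{u\eta, g\psi(u)} \to \nu_{u\eta, g\psi(u)}$ weakly for every $u \in \mathcal{W}_\eta^1$ simultaneously. Passing to the limit and sending $\epsilon \to 0$ yields the stated identity. The one genuinely technical step is the uniform control of the $b_{n-1}/b_n$ discrepancy: although the weighted tail $\sum_n t^{-n} b_n(\cdots)$ itself blows up as $t \to \rho+$, the point is that the relative error introduced by the $b$-shift is uniformly small on the tail while the head is of strictly lower order than $\mathcal{P}(t)$, so the two effects decouple in the limit.
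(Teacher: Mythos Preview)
Your proposal is correct and follows essentially the same route as the paper: decompose each word in $\mathcal{W}_\eta^n$ by splitting off the letter $u$ adjacent to $\eta$, use the cocycle identities for $f^n$ and $\psi^n$, and then pass to the limit $t\to\rho+$ using $b_n/b_{n-1}\to 1$ together with the divergence of $\mathcal{P}(t)$. If anything, you are more careful than the paper in justifying why the $b_{n-1}$-versus-$b_n$ discrepancy vanishes in the limit; the paper simply writes the factor $b_n/b_{n-1}$ inside the sum and asserts the conclusion, whereas your head/tail $\epsilon$-argument makes explicit why this is legitimate.
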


\begin{proof}
For $t > \rho$, we have
\begin{align*}
\nu_{\eta,g}^t(v) 
&=  \frac{1}{\mathcal{P}(t)}\sum_{n=1}^\infty t^{-n}b_n
\sum_{w\in \mathcal{W}_\eta^n} e^{f^n(z)}v{(w\eta, g\psi^n(w))} \\
&=
\frac{1}{\mathcal{P}(t)}\sum_{u\in\mathcal{W}_\eta^1}
\sum_{n=1}^\infty t^{-n}b_n\sum_{w\in \mathcal{W}_{u\eta}^{n-1}} 
e^{f^n(wu\eta)} v{(wu\eta, g\psi(u)\psi^{n-1}(w))} 
\\
&=
\frac{1}{\mathcal{P}(t)}\sum_{u\in\mathcal{W}_\eta^1}  t^{-1} e^{f(u\eta)} 
\sum_{n=1}^\infty \frac{b_n}{b_{n-1}}t^{-(n-1)}b_{n-1}
\\
&\times
\sum_{w\in \mathcal{W}_{u\eta}^{n-1}} e^{f^{n-1}(wu\eta)} v{(wu\eta, g\psi(u)\psi^{n-1}(w))}.
\end{align*}
Letting $t \to \rho+$, we obtain
\[
\nu_{\eta,g}(v) = \rho^{-1} \sum_{u\in\mathcal{W}_\eta^1}  e^{f(u\xi)}\nu_{u\eta,g\psi(u)}( v),
\]
where we have used the divergence of $\mathcal{P}(t)$ as $t\to \rho+$ and that $\lim_{n\to\infty} b_n/b_{n-1}=1$.
\end{proof}

To complete the proof of
Theorem \ref{intro-main2},
we introduce the trick of Roblin -- when $G$ is amenable we can almost project $\nu$ to an eigenfunction 
for $\Sigma$.

\begin{proof}[Proof of Theorem \ref{intro-main2}]
Let $\mathfrak M$ be the Banach mean for $G$. We fix a basepoint $o\in \Sigma$.
Let $v : \Sigma^+ \times G$ be a non-nefative continuous function.
Since $T_\psi$ is transitive, iterating Lemma \ref{almosteig} gives that, for some $C$ independent of $n$,
\[
\rho^n \nu_{o,g}(v) =  \sum_{u\in\mathcal{W}^n_o} e^{f^n(uo)}\nu_{uo,g\psi^n(u)}(v)\ge C\sum_{u\in\mathcal{W}^n_o} e^{f^n(uo)}\nu_{o,g\psi^n(u)}(v)
\]
(where we use the Lemma \ref{easyinequality} to compare $\nu_{uo,g\psi^n(u)}(v)$ with 
$\nu_{o,g\psi^n(u)}(v)$).

We cannot apply the mean directly in the above inequality
as $\nu_{o,g\psi^n(u)}(v)$ may not be bounded in $g$.
Instead, Lemma \ref{itsbounded} tells us that we may normalise by $\nu_{o,g}(v)$ to obtain a bounded 
function on $G$. 
Recall Jensen's inequality: let $(X,\mu)$ be a measure space and $h\in L^1(\mu)$. If $\phi:\mathbb{R}\to \mathbb{R}$ is convex, then
\[
\phi\left(\int h \, d\mu \right)\le \int \phi\circ h \, d\mu
\]
The inequality is also true for the Banach mean $\mathfrak M$ in place of the countably additive 
$\int \cdot \, d\mu$, since the proof only uses monotonicity of the integral and linearity. 
The reverse inequality is given when $\phi$ is concave. 
We apply this with concave function $\phi = \log$ to obtain
\begin{align*}
C^{-1} \rho^n 
\mathfrak M\left[ g\mapsto \frac{\nu_{o,g}(v)}{\nu_{o,g}(v)} \right] &\ge  
\mathfrak M\left[ g\mapsto \sum_{u\in\mathcal{W}^n_o} 
e^{f^n(uo)}\frac{\nu_{o,g\psi^n(u)}(v)}{\nu_{o,g}(v)} \right]  \\
&= \sum_{u\in\mathcal{W}^n_o} e^{f^n(uo)} 
\mathfrak M\left[ g\mapsto \frac{\nu_{o,g\psi^n(u)}(v)}{\nu_{o,g}(v)} \right]  \\
&\ge \sum_{u\in\mathcal{W}^n_o} e^{f^n(uo)} 
\exp \mathfrak M\left[ g\mapsto \log \frac{\nu_{o,g\psi^n(u)}(v)}{\nu_{o,g}(v)} \right].
\end{align*}
The last function is important to us and we give it a name: we define
$\chi : G \to \mathbb R$ by
\[
\chi(a)={\mathfrak M\left[g\mapsto \log \frac{\nu_{o,ga}(v)}{\nu_{o,g}(v)} \right]}.
\]
We claim that $\chi$ is a homomorphism.
To see this, we first check that $\chi(ab) = \chi(a)+\chi(b)$. Firstly,
\[
\frac{\nu_{o,gba}(v)}{\nu_{o,g}(v)} = \frac{\nu_{o,gba}(v)}{\nu_{o,gb}(v)}\frac{\nu_{o,gb}(v)}{\nu_{o,g}(v)} ,
\]
and when we take the mean, by right invariance, we have 
\[
\mathfrak M\left[g\mapsto \log \frac{\nu_{o,gba}(v)}{\nu_{o,gb}(v)}\right] =
\mathfrak M\left[g\mapsto \log \frac{\nu_{o,ga}(v)}{\nu_{o,g}(v)}\right].
\]
We also have to check that $\chi(e)=0$, but this is immediate.

We can now bound the Gurevi\v{c} pressure.
Since any homomorphism factors through the abelianisation of $G$, if 
$(x,0) \in \Sigma^+ \times G^{\mathrm{ab}}$ satisfies $T_{\psi^{\mathrm{ab}}}^n(x,0)=(x,0)$
then $\chi(\psi^n(x))=0$.
For $u \in \mathcal W^n$, we write $u^\infty$ for the infinite 
concatenation of copies of $u$. 
Since the transition matrix $A$ is aperiodic, there exists an $N \ge 1$ such that, for every $n \ge 1$ and every 
$u \in \mathcal W^n$ such that $u^\infty \in \Sigma^+$, there exists an admissible word $u^\#$ of length $N+1$ such that $(u^\#)_0 =u_0$ and 
$uu^\# \in \mathcal W_o^{n+N+1}$. Since $u^\infty$ and $uu^\# o$ agree in the first $n+1$ places, we have
$\psi^n(u^\infty) = \psi^n(uu^\# o)$ and
\[
e^{f^n(u^\infty)} \le C_f e^{f^n(uu^\# o)}.
\]
Thus we have
\begin{align*}
\sum_{\substack{x\in \Sigma^+: \; \sigma^nx=x \\ T^n_{\psi^{\mathrm{ab}}}(x,0)=(x,0)}} e^{f^n(x)} 
&=
\sum_{\substack{x\in \Sigma^+: \: \sigma^nx=x \\ T^n_{\psi^{\mathrm{ab}}}(x,0)=(x,0)}} 
e^{\chi(\psi^n(x))}e^{f^n(x)} \\
&=
\sum_{\substack{u \in \mathcal W^n : \: u^\infty \in \Sigma^+ \\ (\psi^{\mathrm{ab}})^n(u^\infty)=0}}
e^{\chi(\psi^n(u^\infty))} e^{f^n(u^\infty)} \\
&\le
C_f \sum_{\substack{u \in \mathcal W^n : \: u^\infty \in \Sigma^+ \\ (\psi^{\mathrm{ab}})^n(u^\infty)=0}}
e^{\chi(\psi^n(u u^\# o)))} e^{f^n(u u^\# o)} \\
&\le
C_f C_0^{-(N+1)} D^{-(N+1)}\sum_{w \in \mathcal W_o^{n+N+1}} 
e^{\chi(\psi^{n+N+1}(wo))} e^{f^{n+N+1}(wo)}  
\\
&\le C^{-1} C_f C_0^{-(N+1)} D^{-(N+1)} \rho^{n+N+1},
\end{align*}
where
$D = \inf_{x \in \Sigma^+} e^{\chi(\psi(x))}$.
We conclude that $e^{P_{\mathrm{Gur}}(f, T_{\psi^{\mathrm{ab}}})}\le \rho =e^{P_{\mathrm{Gur}}(f, T_\psi)}$. The other inequality is trivially true, therefore 
$P_{\mathrm{Gur}}(f, T_{\psi^{\mathrm{ab}}})= P_{\mathrm{Gur}}(f, T_\psi)$.
\end{proof}

\section{Symbolic Dynamics for Anosov Flows}\label{symbolicdynamics}

We begin by discussing the symbolic coding of Anosov
flows (and, more generally, hyperbolic flows) introduced by Ratner \cite{Ratner} and Bowen \cite{Bowen}.
Let $M$ be a smooth compact Riemannian manifold and let $\phi^t : M \to M$ be a $C^1$ flow. 
A closed, $\phi^t$-invariant set $\Lambda\subset M$ is said to be \emph{hyperbolic} if 
there is a continuous, $D\phi^t$-invariant splitting of the tangent bundle
\[
T_{\Lambda}(M) = E^0\oplus E^s \oplus E^u
\]
and constants $C,\lambda>0$ such that $E^0$ the line bundle tangent to the flow direction and
\begin{enumerate}
\item $\|D\phi^t v\| \le C e^{-\lambda t}\|v\|$ for all $v\in E^s$;
\item $\|D\phi^{-t} v\| \le C e^{-\lambda t}\|v\|$ for all $v\in E^u$.
\end{enumerate}
(We remark that this definition is independent of the choice of metric when $\Lambda$ is compact.)
If $M$ is a hyperbolic set then we call $\phi^t : M \to M$ an Anosov flow.
There exist examples of Anosov flows which are not transitive
\cite{FranksWilliams}, \cite{BBGR-H} but we shall always assume that transitivity holds, so we have that
$M = \Omega(\phi)$, the non-wandering set for $\phi$. By a fundamental result of Anosov
\cite{Anosov}, the set of periodic orbits of $\phi$ is dense in $\Omega(\phi)$ and hence, in our setting, in $M$.

We now describe some of the constructions which play an important role in the symbolic coding of transitive Anosov flows.
For $x\in  M$ define the (strong) \emph{local stable manifold} $W_\epsilon^s(x)$ and (strong) \emph{local unstable manifold} $W_\epsilon^u(x)$ by
\[
W_\epsilon^s(x) = \left\{y\in M : d(\phi^t(x),\phi^t(y))\le \epsilon \text{ for all }t, \lim_{t\to \infty}d(\phi^t(x),\phi^t(y))=0 \right\},
\]
\[
W_\epsilon^u(x) = \left\{y\in M : d(\phi^{-t}(x),\phi^{-t}(y))\le \epsilon \text{ for all }t, \lim_{t\to \infty}d(\phi^{-t}(x),\phi^{-t}(y))=0 \right\}.
\]
For small enough $\epsilon>0$, these sets are diffeomorphic to embedded disks of 
dimension $d^s$ and $d^u$, respectively, where $d^s + d^u = \dim M -1$. These sets give us a \emph{local product structure} $[\cdot,\cdot]$. For sufficiently close $x,y$, we have that $W_\epsilon^s(x)\cap W^u_\epsilon(\phi^t(y))\ne\varnothing$ for a unique $t\in[-\epsilon,\epsilon]$, and we define $[x,y]$ to be this intersection point.

Suppose that $D_1,\ldots , D_k$ are codimension $1$ disks that form a local cross-section to the flow.
We say that $R_i\subset \mathrm{int}(D_i)$ is a \emph{rectangle} if $x,y\in R_i$ implies that 
$[x,y]= \phi^t z$, for some $z\in D_i$, $t\in [-\epsilon,\epsilon]$,
where the interior is taken relative to $D_i$.
We say that $R_i$ is \emph{proper} if $\overline{\mathrm{int}(R_i)}=R_i$, where again the interior is 
taken relative to $D_i$.

Write $P$ for the Poincar\'{e} map $P: \bigcup_{i=1}^kR_i\to \bigcup_{i=1}^kR_i$. Write $W^s_\epsilon(x,R_i)$ and $W^u_\epsilon(x,R_i)$ for the projection of $W^s_\epsilon(x)$ and $W^u_\epsilon(x)$ onto $R_i$ respectively. We say that $\mathcal{R}=\left\{R_1,\cdots, R_k\right\}$ is a \emph{Markov section} if
\begin{enumerate}
\item $x\in \mathrm{int}(R_i)$ and $Px\in \mathrm{int}(R_j)$ implies that $P(W^s_\epsilon(x,R_i))\subset W^s_\epsilon(Px,R_j))$; and 
\item $x\in \mathrm{int}(R_i)$ and $P^{-1}x\in \mathrm{int}(R_j)$ implies that $P^{-1}(W^u_\epsilon(x,R_i))\subset W^s_\epsilon(P^{-1}x,R_j))$.
\end{enumerate}

\begin{proposition}[Bowen \cite{Bowen}, Ratner \cite{Ratner}]
For all sufficiently small $\epsilon>0$, $\phi^t$ has a Markov section $
\mathcal{R}=\left\{R_1,\cdots, R_k\right\}$ such that $\mathrm{diam}(R_i)\le \epsilon$ for each $i$, and $\bigcup_{t\in [-\epsilon,\epsilon]}\phi^t(\cup_{i=1}^k R_i)= M$.
\end{proposition}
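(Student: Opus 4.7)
The plan is to follow the classical Bowen--Ratner construction, whose structure splits naturally into (i) producing a finite family of small, flow-transverse proper rectangles that cover $M$ in the sense of the statement, and (ii) refining them so that the Poincar\'e return map respects the local product structure.

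First, I would exploit compactness of $M$ together with the local product structure $[\cdot,\cdot]$ coming from $W^s_\epsilon$ and $W^u_\epsilon$. For each $x \in M$, there is a flow box $\bigcup_{|t|<\epsilon}\phi^t(D_x)$ with $D_x$ a small codimension-$1$ disk transverse to $\mathcal X_\phi$ containing a small proper rectangle $\widetilde R_x \subset \mathrm{int}(D_x)$ of diameter less than $\epsilon$. By compactness, a finite subcollection $\widetilde R_1,\ldots,\widetilde R_N$ already satisfies $\bigcup_{|t|<\epsilon}\phi^t(\bigcup_i\widetilde R_i)=M$. This gives the covering property but does not yet give the Markov property; in general the Poincar\'e image $P(\widetilde R_i)$ will cut across several $\widetilde R_j$ in ways that do not respect stable/unstable slices.

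The core of the argument is the refinement step. Following Bowen's idea (adapted to flows by Ratner), I would cut each $\widetilde R_i$ along the stable and unstable leaves inside $\widetilde R_i$ that pass through every point in $\widetilde R_i \cap P^{\pm 1}(\partial \widetilde R_j)$. Concretely, for $x \in \widetilde R_i$ put
\[
R_i^*(x) = \{\,y \in \widetilde R_i : [x,y]\in \widetilde R_i \text{ and } x,y \text{ lie in the same } P\text{-equivalence class}\,\},
\]
where two points are declared equivalent if their forward and backward Poincar\'e orbits hit the interiors of the same $\widetilde R_j$'s for all time and within each visit land in the same stable/unstable slice. Hyperbolic expansion/contraction guarantees that this equivalence has only finitely many classes on each $\widetilde R_i$, yielding a refined finite family $\{R_1,\ldots,R_k\}$ of subrectangles. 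Taking closures of interiors ensures the rectangles are proper, and the Anosov Closing Lemma plus transitivity can be used to verify that small enough $\epsilon$ still gives the covering property $\bigcup_{|t|\le\epsilon}\phi^t(\bigcup R_i)=M$.

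The main obstacle, and the subtle part of Bowen's construction, is showing that the refined rectangles genuinely satisfy the Markov conditions $P(W^s_\epsilon(x,R_i)) \subset W^s_\epsilon(Px,R_j)$ and the dual statement for $W^u_\epsilon$, without creating infinitely many pieces. The standard way through is to observe that because $W^s$ and $W^u$ are uniformly transverse, $P$-invariantly contracting/expanding respectively, and the initial boundaries $\partial\widetilde R_i$ are compact, the set of stable/unstable leaves used in the refinement is finite after a uniform number of iterates; the remaining ``corners" are absorbed by taking closures and by choosing $\epsilon$ small enough that no stable leaf of one rectangle crosses the unstable boundary of another in a pathological way. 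Once the Markov conditions hold on $\mathrm{int}(R_i)$, they extend to $R_i = \overline{\mathrm{int}(R_i)}$ by continuity of $P$ on the cross-section, completing the proof.
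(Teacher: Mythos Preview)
The paper does not prove this proposition at all: it is stated purely as a citation to Bowen and Ratner, with no argument supplied. So there is no ``paper's own proof'' to compare your sketch against, and for the purposes of this paper a bare citation is what is expected.

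That said, your sketch has a genuine gap in the refinement step. You declare two points of $\widetilde R_i$ equivalent when ``their forward and backward Poincar\'e orbits hit the interiors of the same $\widetilde R_j$'s for all time''. That equivalence relation has, in general, uncountably many classes (it is essentially the full symbolic itinerary), so it cannot produce a finite family of subrectangles as you claim. In the actual Bowen--Ratner construction one refines using only the \emph{first} forward and first backward Poincar\'e images: one cuts $\widetilde R_i$ along the stable and unstable leaves through the finitely many intersections $\widetilde R_i \cap P^{-1}(\partial^s \widetilde R_j)$ and $\widetilde R_i \cap P(\partial^u \widetilde R_j)$. Finiteness then follows simply because each $\widetilde R_i$ meets only finitely many first-return preimages of the other rectangles, and the Markov property for the refined family is checked directly from the way the cuts were made, not from hyperbolic contraction over many iterates. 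Your invocation of the Anosov Closing Lemma for the covering property is also unnecessary; the covering $\bigcup_{|t|\le\epsilon}\phi^t(\bigcup R_i)=M$ is inherited from the initial cover by compactness, since the refined rectangles have the same union of interiors (up to a set of empty interior) as the original $\widetilde R_i$.
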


These Markov sections provide us with a ``symbolic coding'' for the geodesic flow. In the following, the Markov section $\mathcal{R}=\left\{R_1,\cdots, R_k\right\}$ plays the role of an alphabet for a subshift of finite type $\Sigma$ with transition matrix $A$, defined by $A(i,j)=1$ if there is $x\in \mathrm{int}(R_i)$ with $Px\in \mathrm{int}(R_j)$. 

\begin{proposition}[Bowen \cite{Bowen}, Bowen and Ruelle \cite{BowenRuelle}]\label{Bowen}
There is a mixing subshift of finite type $\sigma :\Sigma\to \Sigma$ and a strictly positive H\"{o}lder 
continuous potential $r:\Sigma\to\mathbb{R}^+$ such that the suspended flow $\sigma^t_r: \Sigma^r\to \Sigma^r$ is semi-conjugate to  $\phi^t:M\to M$. More precisely, there exists a 
bounded-to-one surjective H\"{o}lder continuous
function  $\theta: \Sigma^r\to M$ such that $\theta\circ \sigma_r^t = \phi^t\circ \theta$. 
Furthermore, if $f : M \to \mathbb R$ is H\"older continuous  then
$f \circ \theta : \Sigma^r \to \mathbb R$ is H\"older continuous
and $\theta$ is a measure theoretic isomorphism between the equilibrium states of $f \circ \theta$ and $f$.
In particular, $P(f\circ \theta) = P(f)$.
\end{proposition}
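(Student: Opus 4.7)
The plan is to build the subshift, the roof function, and the semi-conjugacy directly from the Markov section $\mathcal{R}=\{R_1,\ldots,R_k\}$ supplied by the previous proposition. Take alphabet $S=\{1,\ldots,k\}$ and transition matrix $A(i,j)=1$ iff some interior point of $R_i$ maps into the interior of $R_j$ under the Poincar\'e return map $P$. Because $\phi^t$ is transitive and weak mixing, and every orbit meets $\bigcup_i R_i$ within time $\epsilon$, the two-sided shift $\sigma:\Sigma\to\Sigma$ is mixing. The Markov property, combined with the local product structure and the uniform hyperbolicity constants $(C,\lambda)$, ensures that for each $x\in\Sigma$ the set
\[
\bigcap_{n\ge 0} P^{-n}(R_{x_n}) \cap \bigcap_{n\ge 1} P^n(R_{x_{-n}})
\]
shrinks exponentially to a unique point $z(x)\in R_{x_0}$, and the map $x\mapsto z(x)$ is H\"older continuous with exponent controlled by $\lambda$.

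Next, I would define $r:\Sigma\to\mathbb{R}^+$ to be the first return time of $z(x)$ to $\bigcup_i R_i$ under $\phi^t$. Smoothness of the flow and transversality of the rectangles make $r$ strictly positive, bounded above, and H\"older continuous (it depends $C^1$-smoothly on $z(x)$, and $z$ is H\"older). The suspension $\Sigma^r$ is then compact, and $\theta:\Sigma^r\to M$ defined by $\theta(x,s)=\phi^s(z(x))$ is a continuous surjection satisfying $\theta\circ\sigma^t_r=\phi^t\circ\theta$. Surjectivity comes from the fact that every orbit hits $\bigcup_i R_i$ in time at most $\epsilon$; H\"older continuity of $\theta$ is inherited from that of $z$ together with the smoothness of $\phi^t$, and this in turn makes $f\circ\theta$ H\"older whenever $f$ is.

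The main obstacle is the bounded-to-one property together with the measure-theoretic isomorphism. Failure of injectivity of $\theta$ happens only for orbits passing through the boundary set $B:=\bigcup_i(\partial^s R_i\cup\partial^u R_i)$; since the section is finite and each boundary point lies in boundedly many rectangles, the multiplicity of $\theta$ is uniformly bounded. For the measure-theoretic isomorphism, the hard part will be showing that the flow-saturation $\widetilde B:=\bigcup_{|t|\le\epsilon}\phi^t(B)$ has $\mu_f$-measure zero for the equilibrium state $\mu_f$ of $f$. This I would do by noting that $\widetilde B$ is a proper closed subset of $M$ whose intersection with the cross-section is, up to a lower-dimensional piece, forward and backward invariant under $P$; since $\mu_f$ is ergodic and fully supported (by the uniqueness theory for equilibrium states of H\"older potentials on transitive Anosov flows), this forces $\mu_f(\widetilde B)=0$. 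Pulling this back, any equilibrium state of $f\circ\theta$ assigns zero mass to $\theta^{-1}(\widetilde B)$, so $\theta$ restricts to a measure-preserving bijection on sets of full measure. The identity $P(f\circ\theta)=P(f)$ then follows either through the variational principle applied to this measure-theoretic isomorphism, or more directly from the bijective correspondence between periodic orbits of $\phi^t$ and of $\sigma^t_r$, which preserves period sums $\int_\gamma f\,dt$ by the very definition of $r$ and $\theta$.
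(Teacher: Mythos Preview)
The paper does not prove this proposition; it is stated with attribution to Bowen \cite{Bowen} and Bowen--Ruelle \cite{BowenRuelle} and used as a black box. Your sketch is the standard construction from those references, and it is broadly correct as an outline.

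One remark on your final sentence: the correspondence between periodic orbits of $\sigma_r^t$ and of $\phi^t$ is \emph{not} a bijection, precisely because of the boundary orbits you discuss. The paper records this immediately after the proposition as Lemma~\ref{remainders} (Bowen's version of Manning's trick), where auxiliary subshifts with signs $\epsilon_i\in\{-1,1\}$ correct the overcount. For the identity $P(f\circ\theta)=P(f)$ this does not matter --- bounded multiplicity does not affect exponential growth rates, and in any case the variational-principle route you give first is clean --- but you should not claim a bijection there.
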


The semi-conjugacy $\theta : \Sigma^r \to M$ is not in general a bijection, so results on counting orbits 
do not immediately translate between settings. 
In particular, there is overcounting of the periodic orbits that pass through the boundaries of the sections.
However, this discrepancy may be accounted for by the following result of Bowen (extending work of Manning \cite{Manning} in the diffeomorphism case).

\begin{lemma}[Bowen \cite{Bowen}]\label{remainders}
There are finitely many additional subshifts of finite type $\sigma_i :\Sigma_i \to \Sigma_i$, $i=1,\ldots,q$,
with corresponding 
strictly positive H\"older continuous functions $r_i : \Sigma_i \to \mathbb R$
and H\"older continuous maps $\theta_i : \Sigma_i^{r_i} \to M$, which are bounded-to-one but not surjective, such that
$\theta_i\circ \sigma_r^t = \phi^t\circ \theta_i$, $i=1,\ldots,q$, and such that, if $\nu(\phi, T)$ 
(respectively, $\nu(\sigma_r,T)$, $\nu(\sigma_{r_i},T)$) denotes the number of periodic 
$\phi$-orbits (respectively, $\sigma_r$-orbits, $\sigma_{r_i}$-orbits) with period equal to $T$ then
\[
\nu(\phi,T) = \nu(\sigma_r,T) + \sum_{i=1}^q \epsilon_i \nu(\sigma_{r_i},T),
\]
with $\epsilon_i\in \{ -1,1\}$.
 \end{lemma}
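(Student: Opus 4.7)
The plan is to use Manning's inclusion--exclusion argument, adapted to flows by Bowen, to account for the failure of injectivity of $\theta:\Sigma^r \to M$ along the $\phi$-saturation of the Markov section boundary. The starting observation is that $\theta$ is a bijection on the complement of $\bigcup_{t\in\mathbb R}\phi^t(\partial \mathcal R)$, where $\partial \mathcal R=\bigcup_i \partial R_i$, so periodic $\phi$-orbits avoiding this set are in exact bijection with periodic $\sigma_r$-orbits and any discrepancy is concentrated on orbits that meet the boundary. The Markov property gives the decomposition $\partial R_i=\partial^s R_i\cup \partial^u R_i$ together with the invariance $P(\partial^s \mathcal R)\subset \partial^s\mathcal R$ and $P^{-1}(\partial^u \mathcal R)\subset \partial^u\mathcal R$, so that the flow-saturation of each intersection pattern of these boundary pieces is a closed, $\phi$-invariant hyperbolic set of strictly smaller topological dimension.

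Next, I would build the auxiliary codings. Applying the Bowen--Ratner construction inside each of these boundary invariant sets produces a finite collection of mixing subshifts of finite type $\sigma_i:\Sigma_i\to\Sigma_i$ with strictly positive H\"older roof functions $r_i$ and bounded-to-one H\"older semi-conjugacies $\theta_i:\Sigma_i^{r_i}\to M$ whose images lie in a proper, lower-dimensional subset of $M$. Because the dimension drops strictly at each step, only finitely many indices $i=1,\ldots,q$ arise before the process terminates.

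The counting identity then follows by inclusion--exclusion. For a periodic $\phi$-orbit $\gamma$ of period exactly $T$, the multiplicity $|\theta^{-1}(\gamma)|$ is at least one, and one may write it as $1$ plus an alternating sum of indicators of which boundary strata $\gamma$ visits: orbits crossing a single boundary piece are overcounted by $\sigma_r$, orbits lying in a double intersection are overcorrected by the first-order correction, and so on. Summing over $\gamma$ of period exactly $T$ and identifying each stratum's periodic-orbit count with $\nu(\sigma_{r_i},T)$ yields
\[
\nu(\phi,T)=\nu(\sigma_r,T)+\sum_{i=1}^q \epsilon_i\,\nu(\sigma_{r_i},T),
\]
with $\epsilon_i\in\{-1,+1\}$ determined by the parity of the intersection depth.

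The principal obstacle is the combinatorial bookkeeping: one must verify that the restrictions of $\mathcal R$ to each boundary stratum really give genuine Markov sections for the induced flow on that stratum (so that the $\sigma_i$ are well defined and the $r_i$ are H\"older and strictly bounded below), and that the inclusion--exclusion signs close up to exactly $\{-1,+1\}$ without cancellation. For diffeomorphisms this is Manning's theorem; the extension to flows proceeds by performing the argument on a global cross-section for $\phi$ and then suspending with the roof function $r$, following Bowen's original treatment.
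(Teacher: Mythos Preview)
The paper does not give its own proof of this lemma: it is stated as a result of Bowen (extending Manning's diffeomorphism argument) and simply cited, with no proof environment in the text. So there is nothing in the paper to compare your argument against at the level of detail you have written.

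That said, your sketch is broadly in the spirit of the Bowen--Manning approach. A couple of points where your account drifts from what the cited result actually says. First, the auxiliary shifts $\sigma_i$ are not obtained by building \emph{new} Markov sections on lower-dimensional boundary strata and iterating a dimension-drop argument; rather, they arise directly as sub-systems of the original symbolic model, corresponding to sequences that are multiply coded because of boundary incidences, and the finiteness comes from the bounded-to-one property of $\theta$ together with the combinatorics of the finite alphabet, not from topological dimension. Second, the $\sigma_i$ need not be mixing (they are only subshifts of finite type), so you should not claim this. Finally, the signs $\epsilon_i$ do come from inclusion--exclusion as you say, but the verification that everything organises into genuine subshifts with H\"older roofs and bounded-to-one semiconjugacies is the real content of Bowen's argument, and your sketch treats this as essentially automatic. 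For the purposes of this paper none of this matters, since the lemma is quoted rather than proved.
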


Since the $\theta_i$ are {\it not} surjective, for a H\"older continuous function $F : M \to \mathbb R$
we have $P(F \circ \theta_i, \sigma_{r_i}) < P(F,\phi)$. 
(To see this we follow the argument in section 7.23 of \cite{Ruelle}. First note that
$\theta_i(\Sigma_i^{r_i})$ is a closed $\phi$-invariant proper subset of $M$.
Since $\theta_i$ is bounded-to-one, $P(F \circ \theta_i,\sigma_{r_i}) \le P(F|_{\theta_i(\Sigma_i^{r_i})},\phi)$. 
But since the equilibrium state of $F$ is fully supported, it is easy to see by the variational principle that
$P(F|_{\theta_i(\Sigma_i^{r_i})},\phi) < P(F,\phi)$.)
In particular, the suspended flows $\sigma_{r_i}^t$ have topological entropy 
strictly less than $h$. 

For $F : M \to \mathbb R$, write
\[
N_{\phi}(T,F)=
\sum_{\substack{\gamma\in \mathcal{P}(\phi):\\ l(\gamma)\le T}} \exp\left({\int_\gamma F}\right)
\]
(with similar definitions for the other flows). Then we have the following corollary.

\begin{corollary} \label{cor-bowenmanning}
For every H\"older continuous function $F : M \to \mathbb R$, we have 
$$
N_{\phi}(T,F)  = N_{\sigma_r}(T,F \circ \theta) +  O(e^{h_F^\prime T}),
$$
for some $h_F' < P(F,\phi)$. 
In particular,
\[
N_{\phi}(T,\langle \xi,\Psi \rangle)  = N_{\sigma_r}(T,\langle \xi, \Psi \circ \theta\rangle) +  O(e^{h^\prime T}),
\]
for some $h^\prime < P(\langle \xi,\Psi \rangle,\phi)$.
\end{corollary}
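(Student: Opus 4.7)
The plan is to deduce a weighted version of Lemma \ref{remainders} by following the same inclusion-exclusion argument that produces it (for unweighted counts), and then to control the error terms via a strict pressure inequality.

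First I would upgrade Lemma \ref{remainders} to its weighted analogue. Bowen's proof constructs the Markov section so that each periodic $\phi$-orbit corresponds to a unique periodic $\sigma_r$-orbit except for those that visit the boundaries of the rectangles, where an inclusion-exclusion over intersections of boundaries of pieces of the partition gives the correction terms indexed by $i=1,\ldots,q$ with signs $\epsilon_i$. Crucially, each semi-conjugacy $\theta$ (and each $\theta_i$) respects periods of periodic orbits and, for a periodic orbit $\gamma$ of $\phi$ corresponding to a periodic $\sigma_r$-orbit $\tilde\gamma$, one has $\int_\gamma F = \int_{\tilde\gamma} F\circ\theta$ (and similarly for $\theta_i$). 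Thus the same inclusion-exclusion argument, now carried out with the multiplicative weights $\exp(\int_\gamma F)$ instead of the weight $1$, gives
\[
N_\phi(T,F) = N_{\sigma_r}(T,F\circ\theta) + \sum_{i=1}^q \epsilon_i N_{\sigma_{r_i}}(T,F\circ\theta_i).
\]

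Next I would bound each of the $q$ error terms. For each $i$, the Hölder continuous function $F\circ\theta_i$ on $\Sigma_i^{r_i}$ satisfies
\[
N_{\sigma_{r_i}}(T,F\circ\theta_i) = O\bigl(e^{P(F\circ\theta_i,\sigma_{r_i})T}\bigr),
\]
which is the standard asymptotic for Hölder weighted periodic-orbit sums on a mixing subshift-of-finite-type suspension (as in \cite{PP}). Hence if we set
\[
h_F' := \max_{1\le i\le q} P(F\circ\theta_i,\sigma_{r_i}),
\]
then $\sum_i \epsilon_i N_{\sigma_{r_i}}(T,F\circ\theta_i) = O(e^{h_F'T})$ and the first claim follows as soon as $h_F' < P(F,\phi)$.

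The main obstacle is establishing this strict pressure inequality, and this is where I would spend the proof's effort; the argument is already sketched in the discussion following Lemma \ref{remainders} and I would simply make it explicit. Since each $\theta_i$ is bounded-to-one, $P(F\circ\theta_i,\sigma_{r_i})\le P(F|_{\theta_i(\Sigma_i^{r_i})},\phi)$; the set $\theta_i(\Sigma_i^{r_i})$ is closed, $\phi$-invariant, and a \emph{proper} subset of $M$ (because $\theta_i$ is not surjective); and the equilibrium state $\mu_F$ is ergodic and fully supported on $M$, so it cannot be supported on $\theta_i(\Sigma_i^{r_i})$. By the variational principle together with Lemma \ref{entropyusc} (upper semi-continuity of entropy), the supremum defining $P(F|_{\theta_i(\Sigma_i^{r_i})},\phi)$ is attained at some $\phi$-invariant measure supported on $\theta_i(\Sigma_i^{r_i})$, which is necessarily distinct from $\mu_F$, and hence is strictly less than $P(F,\phi)$. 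Taking the maximum over the finitely many $i$ preserves strict inequality, giving $h_F' < P(F,\phi)$. The ``in particular'' statement is then just the specialization $F=\langle\xi,\Psi\rangle$.
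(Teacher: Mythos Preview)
Your proposal is correct and follows exactly the approach the paper takes: the corollary is stated without a separate proof because it is an immediate consequence of Lemma~\ref{remainders} (applied orbit-by-orbit, so weights can be inserted) together with the strict pressure inequality $P(F\circ\theta_i,\sigma_{r_i})<P(F,\phi)$ that is already argued in the paragraph preceding the corollary. Your write-up simply makes this explicit; the one minor imprecision is that the auxiliary subshifts $\sigma_i$ need not be mixing, but the bound $N_{\sigma_{r_i}}(T,F\circ\theta_i)=O(e^{P(F\circ\theta_i,\sigma_{r_i})T})$ holds regardless.
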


\section{Covers and Group Extensions} \label{coversandgroupextensions}
The aim of this section is to interpret the quantities $h$, $h(X)$ and $h(X^{\mathrm{ab}})$
in terms of a subshift of finite type and its group extensions.

Consider a regular cover $p_X : X \to M$ of $M$, with covering group $G$
(acting as isometries on $X$), and the lifted flow $\phi_X^t : X \to X$.
We will now describe the procedure to ``lift'' the symbolic dynamics for 
$\phi^t:M\to M$ to $\phi_X^t:X\to X$.
We use the decoration $\mathcal{R}^M=\left\{R^M_1,\cdots, R^M_k\right\}$ for the Markov section for $\phi^t:M\to M$. For every pair $(i,j)$ such that $A(i,j)=1$, there is a minimum $t_{ij}>0$ with 
$\phi^{t_{ij}}(R^M_i)\cap R^M_j \ne\varnothing$. 
These $t_{ij}$ are clearly independent of the covering $X$ and $\mathcal R^M$ may be chosen so that $t_{ij}$
is arbitrarily small.

We assume that the rectangles in $\mathcal{R}^M$ have sufficiently small diameter so that 
they are each contained  in an open ball that is simply connected. For each $R^M\in \mathcal{R}^M$, 
fix a lift $R^X\subseteq X$. (Or more precisely, fix a connected open set $U^M$ containing $R^M$, 
and a connected lift $U^X$. Then $R^X$ is chosen to lie inside $U^X$.) 
Having chosen the flow time between rectangles, and diameter of rectangles 
to be sufficiently small, we deduce that there is a unique $g = g(i,j)\in G$ with 
$\phi^{t_{ij}}(R^X_i)\cap g R^X_j \ne \varnothing$.
We now define a function $\psi:\Sigma\to G$ by 
$\psi(x) = g(x_0,x_1)$.

We will use the function $\psi$ to identify periodic orbits in $M$ that lift to periodic obits in $X$. 
We precede this discussion by elaborating on the \emph{$G$-class} of a closed curve. 
Let $x \in M$ 
be fixed.
Given that the covering map $p:X\to M$ is fixed, we have an isomorphism $G=\pi_1(M,x)/N$, where $N$ is a normal
subgroup of $\pi_1(M,x)$ (isomorphic to $\pi_1(X)$). 

For an arbitrary closed curve $\gamma:[0,R]\to M$, we can choose paths connecting $x$ to $\gamma(0)$ 
to produce a closed curve based at $x$ and hence a homotopy class in $\pi_1(M,x)$. 
A different choice of 
path from $x$ to $\gamma(0)$ gives a 
conjugate element of $\pi_1(M,x)$. Thus we obtain a well-defined conjugacy class in $G$,
which we will denote by $\langle \gamma \rangle_X$ and call the $G$-{\it class} of
$\gamma$. 
We say that the $G$-class is {\it trivial} if $\langle \gamma \rangle_X$ consists only of the identity,
in which case we slightly abuse notation by writing $\langle \gamma \rangle_X = e$,
and this is equivalent to $\gamma$ lifting to a closed curve in $X$
Clearly, a closed curve $\gamma$ has 
trivial $G$-class if any choice of path $x$ to $\gamma(0)$ gives a closed curve with homotopy 
class in $N$.

We claim that $\psi$ codes orbits with trivial $G$-class, but we can only make statements 
about points for which the semi-conjugacy $\theta$ is actually a bijection. 
Write $\mathcal{P}(\phi)^*$ for the set of periodic orbits in $M$ for which $\theta$ induces a bijection.

\begin{proposition}\label{skew1}
Suppose that $\{x,\sigma x,\ldots,\sigma^{n-1}x\}$ is a periodic $\sigma$-orbit (with $\sigma^n x=x$)
corresponding to $\gamma \in \mathcal P(\phi)^*$. Then
$\gamma$ has trivial $G$-class if and only if 
$\psi^n(x)
= e$.
\end{proposition}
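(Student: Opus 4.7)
The plan is to lift the periodic orbit $\gamma$ to $X$ step by step along its symbolic itinerary and to show that the end-point of the lifted segment is obtained from the starting point by the action of $\psi^n(x)$. The equivalence will then follow because $G$ acts freely on $X$ and because, as already noted in the definition of the $G$-class, $\langle\gamma\rangle_X$ is trivial precisely when $\gamma$ lifts to a closed curve in $X$.

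Concretely, since $\gamma \in \mathcal P(\phi)^*$, the semi-conjugacy $\theta$ is bijective on $\gamma$, so $\gamma$ passes through the interiors of $R^M_{x_0},\ldots,R^M_{x_{n-1}}$ in order before returning to $R^M_{x_0}$. Let $y_i \in R^M_{x_i}\cap\gamma$ be the successive intersection points, and let $s_i$ be the transit time from $y_i$ to $y_{i+1}$, so that $\sum_{i=0}^{n-1} s_i = l(\gamma)$ and each $s_i$ is close to $t_{x_i x_{i+1}}$. Choose a lift $\widetilde y_0 \in R^X_{x_0}$ and set $\widetilde y_{i+1} = \phi_X^{s_i}\widetilde y_i$; each $\widetilde y_i$ is a lift of $y_i$.

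The key step is to establish inductively that $\widetilde y_i \in g_i R^X_{x_i}$, where $g_i := \psi(x)\psi(\sigma x)\cdots\psi(\sigma^{i-1}x) = \psi^i(x)$. The case $i=0$ is our choice of $\widetilde y_0$. For the inductive step, write $\widetilde y_i = g_i z$ with $z \in R^X_{x_i}$; since the $G$-action commutes with $\phi_X^t$, we have $\widetilde y_{i+1} = g_i\,\phi_X^{s_i}z$. Because the rectangles are contained in simply connected open neighbourhoods $U^M_j$ with uniquely chosen connected lifts $U^X_j$, and because $s_i$ is close to $t_{x_i x_{i+1}}$, the point $\phi_X^{s_i}z$ lies in the same $G$-translate of $R^X_{x_{i+1}}$ as $\phi_X^{t_{x_i x_{i+1}}}(R^X_{x_i})\cap(\cdot)R^X_{x_{i+1}}$, which by the very definition of $\psi$ is $\psi(\sigma^i x)R^X_{x_{i+1}}$. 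Hence $\widetilde y_{i+1} \in g_{i+1} R^X_{x_{i+1}}$, closing the induction.

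Taking $i=n$ gives $\widetilde y_n \in \psi^n(x) R^X_{x_0}$. Since $\widetilde y_n$ projects to $y_n=y_0$, it is a lift of $y_0$, so in fact $\widetilde y_n = \psi^n(x)\,\widetilde y_0$. Freeness of the $G$-action then forces $\widetilde y_n = \widetilde y_0$ if and only if $\psi^n(x)=e$; combined with the fact that $\langle\gamma\rangle_X = e$ iff the lift of $\gamma$ starting at $\widetilde y_0$ closes up, this yields the proposition. The main obstacle (the only nontrivial point) is the inductive step: it requires checking that the $G$-translate to which the actual orbit $\gamma$ is lifted agrees with the $G$-translate predicted by the definition of $g(x_i,x_{i+1})$, and this is where the small-diameter, connected-lift hypothesis on the Markov section is essential so that continuity of $\phi_X^t$ pins down the component unambiguously.
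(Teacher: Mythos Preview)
Your proof is correct and follows essentially the same approach as the paper's own argument: lift $\gamma$ starting in $R^X_{x_0}$, track the $G$-translate of the rectangle containing each successive intersection point, and conclude that the lift ends in $\psi^n(x)R^X_{x_0}$. The paper compresses the entire inductive argument into the phrase ``by construction,'' whereas you spell out the induction and explain why the small-diameter, simply-connected hypotheses on the rectangles guarantee that the lift lands in the predicted $G$-translate; your version is a faithful elaboration of theirs.
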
 

\begin{proof}
We can regard $\theta(x,0)$ as the initial point on $\gamma$. Let $\widetilde \gamma$ be a lift of 
$\gamma$, we we may assume starts in $R_{x_0}^X$. By construction, $\widetilde \gamma$ ends 
in $\psi^n(x)$ and is thus periodic if and only if $\psi^n(x)=e$.
\end{proof}

Now suppose that $Y=X^{\mathrm{ab}}$ is the maximal abelian subcover of $X$, with abelian covering group $G^{\mathrm{ab}}=G/[G,G]$. Then $\langle \gamma \rangle_Y = [\gamma]$.
We define $\psi_Y:\Sigma\to G^{\mathrm{ab}}$ 
in an analogous way to $\psi : \Sigma \to G$, i.e. $\psi_Y(x)$ is given by
$\phi^{t_{x_0x_1}}(R^Y_{x_0})\cap \psi_Y(x)R^Y_{x_1} \ne \varnothing$, where $R^Y_i$
is the projection of $R_i^X$ to $Y$. Clearly, $\psi_Y$ is the composition of $\psi$ and the projection 
homomorphism from $G$ to $G^{\mathrm{ab}}$.
We immediately have the following.

\begin{proposition}\label{skew2}
Suppose that $\{x,\sigma x,\ldots,\sigma^{n-1}x\}$ is a periodic $\sigma$-orbit (with $\sigma^n x=x$)
corresponding to $\gamma \in \mathcal P(\phi)^*$. Then
$[\gamma] = \psi_Y^n(x)$ (and, in particular, $[\gamma] =0$ if and only if 
$\psi_Y^n(x)=0$).
\end{proposition}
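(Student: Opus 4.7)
The plan is to adapt the proof of Proposition \ref{skew1} from testing closedness of a lift to tracking the full value of the cocycle. The construction of $\psi_Y : \Sigma \to G^{\mathrm{ab}}$ via the lifted rectangles $R_i^Y \subset Y$ is exactly parallel to the construction of $\psi$ via the $R_i^X \subset X$; moreover, since $R_i^Y$ is the projection of $R_i^X$ under $X \to Y$ and the covering $X \to Y$ is compatible with the projection $\pi : G \to G^{\mathrm{ab}}$, we indeed have $\psi_Y = \pi \circ \psi$ as stated. Because $G^{\mathrm{ab}}$ is abelian, the conjugacy class $\langle \gamma \rangle_Y$ is a single element, which by construction equals the $[\gamma]$ defined in Section \ref{subsection:windingcycles} via the relation $\phi_Y^{l(\gamma)} \widetilde x = [\gamma] \widetilde x$ for any lift $\widetilde x$.

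First I would fix the lift $\widetilde x \in R_{x_0}^Y$ of the initial point $\theta(x,0)$ on $\gamma$, and follow the lifted flow $\phi_Y^t \widetilde x$. By the very definition of $\psi_Y$, the first return to the family $\bigcup_i R_i^Y \cup \bigcup_i \psi_Y(x) R_i^Y \cup \cdots$ places the orbit in $\psi_Y(x) R_{x_1}^Y$; iterating and using the cocycle identity
\[
\psi_Y^n(x) = \psi_Y(x)\,\psi_Y(\sigma x)\cdots \psi_Y(\sigma^{n-1}x),
\]
after $n$ symbolic returns (which, since $\gamma \in \mathcal P(\phi)^*$, correspond bijectively via $\theta$ to one full traversal of $\gamma$) the lifted endpoint lies in $\psi_Y^n(x)\, R_{x_0}^Y$. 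On the other hand, the endpoint is $\phi_Y^{l(\gamma)} \widetilde x = [\gamma] \widetilde x \in [\gamma]\, R_{x_0}^Y$. Comparing, the choice of lift and of $R_{x_0}^Y$ being unambiguous, yields $[\gamma] = \psi_Y^n(x)$.

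The main point requiring care is that the symbolic iterate count $n$ really matches the number of section crossings of the actual flow orbit; this is the reason for restricting to $\mathcal P(\phi)^*$, where the semiconjugacy $\theta : \Sigma^r \to M$ is a bijection on the relevant orbit, so no overcounting or undercounting arises from the boundary identifications used in Bowen's construction. The special case $[\gamma] = 0 \iff \psi_Y^n(x) = 0$ then follows immediately, and is in fact equivalent to Proposition \ref{skew1} applied to the abelian cover $Y \to M$ in place of $X \to M$.
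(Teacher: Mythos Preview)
Your argument is correct and is precisely the content the paper leaves implicit: the paper's proof is the single line ``This follows from Proposition \ref{skew1} and the fact that $G^{\mathrm{ab}}$ is abelian,'' which amounts exactly to rerunning the lift-tracking argument of Proposition \ref{skew1} on the abelian cover $Y$ and observing that, because $G^{\mathrm{ab}}$ is abelian, the endpoint $\psi_Y^n(x)\,R_{x_0}^Y$ determines the actual element $[\gamma]$ rather than merely a conjugacy class. Your write-up simply unpacks this in more detail.
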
 

\begin{proof}
This follows from Proposition \ref{skew1} and the fact that $G^{\mathrm{ab}}$ is abelian.
\end{proof}

We now wish to make two reductions. First of all, if $G^{\mathrm{ab}} \cong \mathbb Z^a \times G_0$ 
with $G_0$ a non-trivial finite abelian group then
$Y$ is a finite cover of a manifold $Y'$, which is a $\mathbb Z^a$-cover of $M$. 
By Lemma \ref{zeromean}, $h(Y)=h(Y')$ and so there is no loss of generality in replacing
$Y$ with $Y'$ is our analysis. 
Slightly abusing notation, we shall still use $Y$ to denote this $\mathbb Z^a$-cover of $M$. 
Then $\langle \gamma \rangle_Y = [\gamma] = \int_\gamma \Psi$ is an element of $\mathbb Z^a$.

Secondly, wish to switch from the two-sided subshift of finite type $\sigma : \Sigma \to \Sigma$ to the corresponding one-sided subshift of finite type
$\sigma : \Sigma^+ \to \Sigma^+$.
There is an obvious identification between the periodic orbits of the two systems.
Since the functions $\psi$ and $\psi_Y$ depend on only two co-ordinates, we can equally well regard them as functions defined on the one-sided shift space $\Sigma^+$. Also, by Lemma \ref{sinai}, we may change $r$ by
the addition of a coboundary to obtain a function defined on $\Sigma^+$. We will continue to denote this modified function by $r$ and note that its sums around periodic orbits are unchanged.

After these two reductions, we have extended systems
$T_\psi : \Sigma^+ \times G \to \Sigma^+ \times G$ and $T_{\psi_Y} : \Sigma \times \mathbb Z^a
\to \Sigma^+ \times \mathbb Z^a$.

We end the section with two results on transitivity.

\begin{lemma} \label{transitiveimpliestransitive}
If $\phi_X^t :X\to X$ is transitive, then $T_\psi$ is transitive.
\end{lemma}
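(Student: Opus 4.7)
The plan is to reduce transitivity of $T_\psi$ to a statement about the existence of certain $\phi_X^t$-orbits in $X$, which is then immediate from the hypothesis. Recall that $T_\psi$ is itself a (locally finite) countable state Markov shift on the alphabet $S \times G$ with transition matrix $\widetilde{A}$, so transitivity is equivalent to irreducibility of $\widetilde{A}$. Concretely, this means one must show: for every $(i,g), (j,h) \in S \times G$ there exists an admissible word $x_0 x_1 \cdots x_n$ in $\Sigma^+$ with $x_0 = i$, $x_n = j$ and $\psi^n(x_0 \cdots x_{n-1}) = g^{-1} h$.

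Set $k = g^{-1} h$. By the defining property of $\psi$ given in Section \ref{coversandgroupextensions} (namely $\phi^{t_{ij}}(R_i^X) \cap \psi(i,j) R_j^X \neq \varnothing$ whenever $A(i,j)=1$), such an admissible word corresponds precisely to a $\phi_X^t$-orbit that begins in the lifted rectangle $R_i^X$, passes in sequence through the $G$-translates of rectangles dictated by the word, and terminates in $k R_j^X$. So it suffices to produce, for every $i, j \in S$ and every $k \in G$, a $\phi_X^t$-orbit from $\mathrm{int}(R_i^X)$ to $\mathrm{int}(k R_j^X)$.

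I would produce this orbit directly from transitivity of $\phi_X^t$. Since each $R_i^X$ is proper, its interior (relative to the corresponding local cross-section) is non-empty, and flowing $\mathrm{int}(R_i^X)$ over a small time interval yields a non-empty open subset of $X$; similarly for $k R_j^X$. By transitivity of $\phi_X^t$ there is a time $T>0$ for which the forward image of the first open set after time $T$ meets the second. The set of points whose orbit ever touches $\bigcup_{g \in G, i \in S} \partial(g R_i^X)$, where the symbolic coding is ambiguous, has empty interior in $X$, so a small perturbation within the two open sets yields an orbit that is unambiguously coded by the lifted symbolic dynamics of Section \ref{symbolicdynamics}. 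The resulting admissible word has first symbol $i$, final symbol $j$, and, by the very definition of $\psi$, cocycle value $k$.

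The main step to be careful about is the bookkeeping translating between the flow picture on $X$ and the symbolic coding, and in particular ensuring that one can select a flow point whose orbit (i) actually starts in the prescribed lift $R_i^X$ rather than some other $G$-translate, and (ii) avoids the boundary set on which the coding $\theta$ fails to be injective. Both are handled by working inside the interior open sets above and exploiting that the ``bad'' boundary set is closed with empty interior. Once this is in place, the conclusion follows from transitivity of $\phi_X^t$ with no additional work.
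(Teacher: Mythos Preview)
Your proposal is correct and follows essentially the same idea as the paper's proof: both reduce transitivity of $T_\psi$ to the existence of $\phi_X$-orbit segments passing through prescribed lifted rectangles, which is supplied by transitivity of $\phi_X^t$. The only cosmetic difference is that the paper exhibits a single dense $T_\psi$-orbit (by coding a point with dense $\phi_X$-orbit and showing its image visits every cylinder $U=\bigcap_{j=0}^n P_X^{-j}(\mathrm{int}(g_j R_{i_j}^X))$), whereas you verify irreducibility of $\widetilde A$ by connecting an arbitrary pair of states; the handling of boundary ambiguities is likewise parallel, with the paper using that $U$ is open and non-empty and you invoking a Baire-type argument to perturb off the bad set.
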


\begin{proof}
Let $x \in X$ be a point with dense $\phi_X$-orbit. Without loss of generality
$x \in \bigcup_{i=1}^k \bigcup_{g \in G} g \cdot R_i^X$ and then
$\{P_X^nx\}_{n=-\infty}^\infty$ is dense in $\bigcup_{i=1}^k \bigcup_{g \in G} g \cdot R_i^X$,
where $P_X$ is the Poincar\'e map between the sections in $X$.
Suppose that 
$\widetilde A((i_j,g_j),(i_{j+1},g_{j+1}))=1$, where $\widetilde A$ is the transition
matrix for $\Sigma^+ \times G$, for
$j=0,\ldots,n$. Then
\[
U  = \bigcap_{j=0}^n P_X^{-j}(\mathrm{int}(g_j \cdot  R_{i_j}^X))
\]
is non-empty and open in $\bigcup_{i=1}^k \bigcup_{g \in G} g \cdot R_i^X$. 
(Here $\mathrm{int}(g_j \cdot R_{i_j}^X)$ is taken with respect to the
co-dimension one disk containing $g_j \cdot R_{i_j}^X$.)
Since $x$ has dense $P_X$-orbit, $P_X^m x \in U$ for some
$m \in \mathbb Z$. 
Then $P_X^{m+j}(x) \in \mathrm{int}(g_j \cdot R_{i_j}^X)$
for $j=0,\ldots,n$. By definition, this implies that the $T_\psi$-orbit of 
$(\theta(p_X(x)),g_0) \in \Sigma^+ \times G$ 
(where $\theta(p_X(x))$ is identified with a point in the one-sided shift)
passes through the (arbitrary) cylinder  
$[(i_0,g_0),\ldots,(i_n,g_n)]$ and is thus dense in $\Sigma^+ \times G$. Therefore,
$\widetilde \sigma : \Sigma^+ \times G \to \Sigma^+ \times G$ is transitive. 
\end{proof}

\begin{lemma}\label{transitiveimpliesfull}
If $\phi_Y^t : Y \to Y$ is transitive then the Anosov flow $\phi^t : M \to M$ is $Y$-full. 
 \end{lemma}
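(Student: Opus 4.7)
The plan is to pass to the symbolic extension and extract periodic orbits of every prescribed $G$-class from the irreducibility of the transition matrix of $T_{\psi_Y}$.

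First I would invoke Lemma \ref{transitiveimpliestransitive} (with $Y$ playing the role of $X$ and $\psi_Y$ the role of $\psi$) to conclude that the skew-product $T_{\psi_Y} : \Sigma^+ \times G \to \Sigma^+ \times G$ is transitive. Since $T_{\psi_Y}$ is itself a countable state Markov shift with a locally finite transition matrix $\widetilde{A}_Y$, transitivity is equivalent to $\widetilde{A}_Y$ being irreducible: for any two states $(i,g_1),(j,g_2)\in S\times G$ there is an admissible path connecting them.

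Next, fix an arbitrary $g\in G$ and any symbol $i\in S$. By irreducibility of $\widetilde{A}_Y$, there is an admissible path
\[
(i,0),(x_1,h_1),\ldots,(x_{n-1},h_{n-1}),(i,g)
\]
in $\Sigma^+\times G$. Reading off the first coordinates produces an admissible cycle $i\,x_1\cdots x_{n-1}\,i$ in $\Sigma^+$, and so a periodic point $x$ with $\sigma^n x=x$. The telescoping of the second coordinates along the path forces $\psi_Y^n(x)=g$. Via the symbolic coding of Proposition \ref{Bowen}, this periodic $\sigma$-orbit corresponds to a periodic $\sigma_r$-orbit in $\Sigma^r$ which projects under $\theta$ to a periodic $\phi$-orbit $\gamma\subset M$.

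The final step is to identify $[\gamma]$ with $\psi_Y^n(x)$. This is exactly the content of Proposition \ref{skew2} when $\gamma\in\mathcal P(\phi)^*$, and in general it is immediate from the definition of $\psi_Y$: since $\psi_Y(i,j)$ records the unique translate of $R_j^Y$ hit by $\phi^{t_{ij}}(R_i^Y)$ in $Y$, the lift of $\gamma$ starting in $R_{x_0}^Y$ returns to $\psi_Y^n(x)\cdot R_{x_0}^Y$, so its $G$-class is $\psi_Y^n(x)=g$. As $g$ was arbitrary, $\phi^t$ is $Y$-full. I expect the only delicate point to be checking this last identification for arbitrary coded periodic orbits (not just those in $\mathcal P(\phi)^*$), but this is bookkeeping at the level of the Markov section rather than a genuine obstacle.
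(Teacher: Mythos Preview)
Your argument is correct and takes a more direct route than the paper's. The paper argues by contradiction: assuming $\phi$ is not $Y$-full, it invokes the equivalence (i)$\Leftrightarrow$(ii) of Proposition~\ref{asymptotics_abeliancover} to place all $[\gamma]_{\mathrm{TF}}$ in a closed half-space of $\mathbb{R}^a$, so that $\langle t,\psi_Y^n(x)\rangle\ge 0$ for some nonzero $t$ and every periodic point $x$; it then passes to all invariant measures by density of periodic measures and applies a cohomological result of Savchenko to obtain a uniform bound $\langle t,\psi_Y^n(x)\rangle\ge -2\|u\|_\infty$ valid for \emph{every} $x\in\Sigma^+$ and $n\ge 1$, which visibly obstructs transitivity of $T_{\psi_Y}$. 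Your approach bypasses this machinery entirely: irreducibility of $\widetilde A_Y$ already furnishes, for each $g\in G$, an admissible loop from $(i,0)$ to $(i,g)$, and hence a periodic $\sigma$-orbit with $\psi_Y^n(x)=g$; projecting via $\theta$ gives a periodic $\phi$-orbit with $[\gamma]=g$. Your concern about the identification $[\gamma]=\psi_Y^n(x)$ outside $\mathcal P(\phi)^*$ is not a real obstacle --- since $\psi_Y$ depends only on two coordinates and encodes the deck transformation between successive lifted rectangles, the lift of the projected orbit starting in $R_{x_0}^Y$ terminates in $\psi_Y^n(x)\cdot R_{x_0}^Y$ regardless of whether the coding is one-to-one there (indeed, the paper itself uses this identity without restriction in its own proof). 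What you gain is an elementary, self-contained argument that avoids the external reference to \cite{Savchenko}; what the paper's route makes visible, on the other hand, is the cocycle-theoretic mechanism (a cohomological lower bound on $\langle t,\psi_Y\rangle$) by which failure of $Y$-fullness forces non-transitivity.
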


\begin{proof}
Suppose that $\phi_Y$ is transitive. Then, by Lemma \ref{transitiveimpliestransitive}, $T_{\psi_Y} : \Sigma^+ \times \mathbb Z^a \to \Sigma^+ \times \mathbb Z^a$, is transitive.
In addition, 
\[
\bigcup_{n=1}^\infty \{\psi_Y^n(x) \hbox{ : } \sigma^nx=x\} = \{[\gamma] \hbox{ : } \gamma \in \mathcal P(\phi)\},
\]

Suppose $\phi$ is {\it not} $Y$-full. Then, by Proposition \ref{asymptotics_abeliancover}, 
$\bigcup_{n=1}^\infty \{\psi_Y^n(x) \hbox{ : } \sigma^nx=x\}$ is contained in a closed half-space in 
$\mathbb R^a$
and hence there exists $t \in \mathbb R^a \setminus \{0\}$ such that 
$\langle t , \psi_Y^n(x)\rangle \ge 0$, whenever $\sigma^n x=x$. 
Since measures supported on periodoc orbits are dense in the space of $\sigma$-invariant probability
measaures, this gives that 
$\int \langle t , \psi_Y\rangle \, dm \ge 0$ for every $\sigma$-invariant probability measure $m$.
By a result of Savchenko \cite{Savchenko}, this implies that
there exists a continuous function $u : \Sigma \to \mathbb R$ such that
$\langle t, \psi_Y\rangle + u \circ \sigma - u \ge 0$. Hence, for all $n \ge 1$,
$\langle t , \psi_Y^n \rangle + u \circ \sigma^n - u \ge 0$, giving
$\langle t , \psi_Y^n \rangle \ge -2\|u\|_\infty$.
Since $T_{\psi_Y}^n(x,k) = (\sigma^n x,k+\psi_Y^n(x))$, if we choose $\alpha \in \mathrm Z^a$ with 
$\langle t , \alpha \rangle < -2\|u\|_\infty$, then no $T_{\psi_Y}$-orbit starting in
 $\Sigma \times \{0\}$ can reach $\Sigma \times \{\alpha\}$, and so $T_{\psi_Y}$ is not transitive.
\end{proof}

\section{Proof of Theorem \ref{intro-main}} \label{proofofmaintheorem}

We begin with a lemma relating the quantities $h(X)$ and $h(Y)$ to Gurevi\v{c}
pressures for $T_\psi$ and $T_{\psi_Y}$. 
Let $\sigma : \Sigma ^+ \to \Sigma^+$ be the one-sided subshift of finite type and 
let $r : \Sigma^+ \to \mathbb R$ (cohomologous to a strictly positive function),
 $\psi=\psi_X : \Sigma^+ \to G$ and $\psi_Y : \Sigma^+ \to \mathbb Z^a$
be the functions constructed in the previous section.

We have the following key lemma.

\begin{lemma}\label{coding}
The following three 
statements hold:
\begin{enumerate}
\item
$h=P(0,\phi)$ is the unique zero of $s\mapsto P(-sr,\sigma)$; 
\item
$h(Y)=P(\langle \xi, \Psi \rangle,\phi)$, and $h(Y)$ is the unique 
zero of $s\mapsto P_{\mathrm{Gur}}(-sr+\langle \xi,\Psi \rangle,T_{\psi_Y})$; 
\item
$h(X)=h(Y)$ if and only if 
$P_{\mathrm{Gur}}(-h(Y)r+\langle \xi,\psi_Y \rangle,T_\psi)=0$.
\end{enumerate}
\end{lemma}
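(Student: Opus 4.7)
The plan is to express each of $h$, $h(Y)$, $h(X)$ as the unique zero of a suitable (Gurevi\v c) pressure on the one-sided symbolic side, and to use Lemma~\ref{strictlydecreasing} to guarantee uniqueness of each zero (which applies since $r$ is cohomologous to a strictly positive function). The bridges between $M$ and the symbolic model are Proposition~\ref{Bowen} together with Lemma~\ref{remainders}/Corollary~\ref{cor-bowenmanning}, and Lemma~\ref{sinai} for passage from the two-sided to the one-sided shift. The main tool is the Abramov identity
\[
P(F,\sigma_r^t) = s \iff P(\tilde F - sr,\sigma) = 0, \qquad \tilde F(x) := \int_0^{r(x)} F(x,u)\,du,
\]
which converts suspended-flow pressures into base pressures. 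Statement (1) is then immediate: $h = P(0,\phi)$ is the variational definition of topological entropy; by Corollary~\ref{cor-bowenmanning} with $F\equiv 0$ this equals $P(0,\sigma_r^t)$, hence the unique $s$ with $P(-sr,\sigma)=0$.

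For statement (2), the identity $h(Y) = P(\langle \xi,\Psi\rangle,\phi)$ is Lemma~\ref{zeromean}, whose $Y$-fullness hypothesis is supplied by Lemma~\ref{transitiveimpliesfull} applied to the transitive lift $\phi_Y$. For the symbolic side, I would observe that the base potential $\tilde\Psi(x) := \int_0^{r(x)} (\Psi\circ\theta)(x,u)\,du$ is cohomologous over $\sigma$ to $\psi_Y$, since both assign the same $\mathbb R^a$-valued winding vector at every periodic orbit (a Livsic-type argument using density of periodic measures in $\mathcal M(\sigma)$). The variational principle for $\mathbb Z^a$-extensions from \cite{PollicottSharp94} then yields
\[
P_{\mathrm{Gur}}(-sr + \langle \xi,\psi_Y\rangle, T_{\psi_Y}) = \sup\left\{ h_\sigma(m) - s\int r\,dm : m \in \mathcal M(\sigma),\ \int \psi_Y\,dm = 0\right\},
\]
as the constraint kills the $\xi$-term. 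Via Abramov, this constraint matches $\Phi^Y_{\bar m}=0$ on the suspension, and Proposition~\ref{asymptotics_abeliancover} then shows the right-hand side vanishes precisely at $s = h(Y)$; uniqueness of the zero comes from Lemma~\ref{strictlydecreasing}.

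For statement (3), the clean starting observation is the tautology
\[
P_{\mathrm{Gur}}(-sr + \langle \xi,\psi_Y\rangle,T_\psi) = P_{\mathrm{Gur}}(-sr,T_\psi),
\]
because the defining sum for $P_{\mathrm{Gur}}(\cdot,T_\psi)$ ranges only over $x$ with $\psi^n(x)=e$, on which $\psi_Y^n(x) = \pi(\psi^n(x)) = 0$. Proposition~\ref{skew1} identifies precisely these periodic points (modulo the Bowen--Manning boundary corrections of Lemma~\ref{remainders}) with the $\phi$-periodic orbits $\gamma$ satisfying $\langle\gamma\rangle_X = e$, i.e.\ with the orbits counted by $\Pi_X(T)$. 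Combining this with Lemma~\ref{sameabscissa} allows me to identify $h(X)$ with the abscissa of convergence of the series $\sum_n n^{-1}\sum_{\sigma^n x=x,\ \psi^n(x)=e} e^{-sr^n(x)}$, which is the unique zero of $s\mapsto P_{\mathrm{Gur}}(-sr,T_\psi)$ (unique by Lemma~\ref{strictlydecreasing}). Statement (3) then follows immediately: $h(X)=h(Y)$ iff $h(Y)$ is this zero iff $P_{\mathrm{Gur}}(-h(Y)r + \langle\xi,\psi_Y\rangle, T_\psi) = 0$.

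The main obstacle I anticipate is the careful bookkeeping of the Bowen--Manning boundary subshifts in the covered setting: one must check that lifting the auxiliary systems $\sigma_i:\Sigma_i\to\Sigma_i$ together with the restrictions of $\psi$ and $\psi_Y$ still produces only subexponential corrections to $\Pi_X(T)$ and $\Pi_Y(T)$. This should work because each $\theta_i$ is non-surjective, so the equilibrium state of the relevant H\"older potential on $M$ (being fully supported) gives $P(F|_{\theta_i(\Sigma_i^{r_i})},\phi) < P(F,\phi)$, and this strict inequality persists after passage to group extensions, as such extensions can only decrease Gurevi\v c pressure.
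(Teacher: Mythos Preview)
Your overall strategy matches the paper's, and part~(1) together with the tautology $P_{\mathrm{Gur}}(-sr+\langle\xi,\psi_Y\rangle,T_\psi)=P_{\mathrm{Gur}}(-sr,T_\psi)$ in~(3) are handled identically. Two points deserve comment.

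For~(2) you take a genuinely different route: you identify $\tilde\Psi$ with $\psi_Y$ via Livsic, invoke the constrained variational formula from \cite{PollicottSharp94}, and match $\int\psi_Y\,dm=0$ with $\Phi^Y_{\bar m}=0$ via Abramov. This is valid (modulo checking that entropy is preserved under the bounded-to-one semiconjugacy $\theta$ for all invariant measures, not only equilibrium states). The paper instead argues by direct counting: by Lemma~\ref{sameabscissa}, $h(Y)$ is the abscissa of $\sum_{[\gamma]=0}(\Lambda(\gamma)/l(\gamma))e^{-sl(\gamma)}$, which by Corollary~\ref{cor-bowenmanning} differs from $\sum_n n^{-1}\sum_{\sigma^nx=x,\,\psi_Y^n(x)=0}e^{-sr^n(x)}$ by a series of abscissa $<h(Y)$. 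Your variational route buys a conceptual identification; the paper's route is more self-contained and avoids the Abramov/Livsic bookkeeping.

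For~(3) there is a subtle overreach. You assert that $h(X)$ equals the abscissa of $S_2$, i.e.\ the unique zero of $s\mapsto P_{\mathrm{Gur}}(-sr,T_\psi)$. But the error bounds available to you --- the non-prime remainder from the Lemma~\ref{sameabscissa} argument and the Bowen--Manning correction --- are only shown to have abscissa strictly below $h(Y)$, not strictly below $h(X)$; if $h(X)$ were to fall below those thresholds the identification would not follow. Your proposed fix (passing the auxiliary $\sigma_i$ to group extensions) does not close this: you get $P_{\mathrm{Gur}}(\cdot,T_{\psi,i})\le P(\cdot,\sigma_{r_i})<P(\cdot,\sigma)$, but what you need is a comparison with $P_{\mathrm{Gur}}(\cdot,T_\psi)$, which may itself sit well below $P(\cdot,\sigma)$. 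The paper sidesteps the whole issue by working only at the specific value $s=h(Y)$: since $S_1-S_2$ has abscissa $<h(Y)$, the abscissa of $S_1$ equals $h(Y)$ if and only if that of $S_2$ does, and this is precisely the equivalence required. The device that forces the error to be measured relative to $h(Y)$ (rather than $h$) is to insert the harmless factor $e^{\langle\xi,[\gamma]\rangle}=1$ into the sum over orbits with $\langle\gamma\rangle_X=e$ (such orbits have $[\gamma]=0$) and then invoke Corollary~\ref{cor-bowenmanning} with $F=\langle\xi,\Psi\rangle$.
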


\begin{proof}
Part (1) is a standard result, see \cite{PP}.

Let us now prove (2). By Lemma \ref{zeromean}, we know that 
$h(Y)=P(\langle \xi, \Psi \rangle,\phi)$. 
Also, by Lemma \ref{sameabscissa}, $h(Y)$ is the abscissa of convergence of 
\[
\sum_{\substack{\gamma \in \mathcal P(\phi) : \\ [\gamma]=0}}
 \frac{\Lambda(\gamma)}{l(\gamma)} e^{-sl(\gamma)}
 =\sum_{\substack{\gamma \in \mathcal P(\phi) : \\ [\gamma]=0}}
 \frac{\Lambda(\gamma)}{l(\gamma)} e^{-sl(\gamma)+ \langle \xi,[\gamma] \rangle}
 \]
 By Corollary \ref{cor-bowenmanning}, 
 \[
 \sum_{\substack{\gamma \in \mathcal P(\phi) : \\ [\gamma]=0}}
 \frac{\Lambda(\gamma)}{l(\gamma)} e^{-sl(\gamma)+ \langle \xi,[\gamma] \rangle}
 \]
 differs from
 \[
\sum_{n=1}^\infty \frac{1}{n} 
\sum_{\substack{\sigma^n x=x : \\ \psi_Y^n(x)=0}}
e^{-sr^n(x)}
\]
by a series with abscissa of convergence $h'< P(\langle \xi,\Psi\rangle ,\phi) = h(Y)$.
Therefore 
$P_{\mathrm{Gur}}(-h(Y)r+\langle \xi,\Psi \rangle,T_{\psi_Y})=0$.
Since $r$ is strictly positive, Lemma \ref{strictlydecreasing} gives that
$s \mapsto P_{\mathrm{Gur}}(-sr,T_{\psi_Y})$ is strictly decreasing and therefore $h(Y)$ is the unique zero.

Finally, we prove (3). 
Write
\[
S_1(s) := \sum_{\substack{\gamma \in \mathcal P(\phi) : \\ \langle\gamma\rangle=0}}
 e^{-sl(\gamma)}
\quad \mathrm{and} \quad
S_2(s) :=\sum_{n=1}^\infty \frac{1}{n} 
\sum_{\substack{\sigma^n x=x : \\ \psi^n(x)=e}}
e^{-sr^n(x)}.
\]
We have $h(Y)=h(X)$ if and only if
$h(Y)$ is the abscissa of convergence of 
$S_1(s)$.
Since $\langle \gamma \rangle = e$ implies that $[\gamma]=0$, we can use 
argument in the proof of Lemma \ref{sameabscissa} 
and Corollary \ref{cor-bowenmanning} to show that 
the abscissa of convergence of $S_1(s)-S_2(s)$
is bounded above by $\max\{h(Y)/2,h'\} < h(Y)$. 
Hence, $S_1(s)$ has abscissa of convergence $h(Y)$ if and only if
$S_2(s)$ has abscissa of convergence $h(Y)$, which in turn is equivalent to 
$P_{\mathrm{Gur}}(-h(Y)r ,T_\psi)=0$.
But, since $\psi^n(x)=e$ implies that 
$\psi_Y^n(x) =0$, we can employ the argument used to prove (2) 
to show that, for any $s \in \mathbb R$,
\[
P_{\mathrm{Gur}}(-sr +\langle \xi,\psi_Y \rangle,T_\psi)
=P_{\mathrm{Gur}}(-sr ,T_\psi),
\]
completing the proof.
\end{proof}

We now complete the proof of Theorem \ref{intro-main}.

\begin{proof}[Proof of Theorem \ref{intro-main}]
We first deal with the case $a>0$.
Suppose that $G$ is non-amenable.
By Theorem 1.1 of \cite{Jaerisch}, since $G$ is non-amenable, we have that
\[
\log \mathrm{spr}_{\mathcal H}(\mathcal{L}_{-h(Y)r+\langle \xi,\psi_Y \rangle})
<P(-h(Y)r+\langle \xi,\psi_Y \rangle,\sigma).
\]
Moreover, by the same theorem, for any H\"older continuous $f : \Sigma^+ \to \mathbb R$, we always have
$P_{\mathrm{Gur}}(f,T_\psi) \le \log 
\mathrm{spr}_{\mathcal H}(\mathcal{L}_f)$.
Putting these together gives
\begin{align*}
P_{\mathrm{Gur}}(-h(Y)r+\langle \xi,\psi_Y \rangle,T_\psi)
&<
P(-h(Y)r+\langle \xi,\psi_Y \rangle,\sigma) \\
&=P(-h(Y)r + \langle\xi,\psi_Y \rangle,T_{\psi_Y}),
\end{align*}
where the last identity follows from Lemma \ref{coding}.
Now, by the proof of part (3) of Lemma \ref{coding}, this becomes
$P_{\mathrm{Gur}}(-h(Y)r,T_\psi) <0$ and therefore
$h(X)<h(Y)$.

Now suppose that $G$ is amenable.
By Theorem \ref{intro-main2}, 
$P_{\mathrm{Gur}}(-sr+\langle \xi,\psi^{\mathrm{ab}}\rangle,T_\psi)=
P_{\mathrm{Gur}}(-sr+\langle \xi,\psi^{\mathrm{ab}}\rangle,T_{\psi^{\mathrm{ab}}})$ for any $s$. 
By, Lemma \ref{coding}, this implies that $h(X)=h(Y)$.

Finally, we deal with the case where $a=0$, which requires a simple modification of the above arguments. 
In this case, $G^{\mathrm{ab}}$ is finite and so 
$h(Y)=h$. If $G$ is non-amenable then
\[
P_{\mathrm{Gur}}(-hr,T_\psi) 
\le \log \mathrm{spr}_{\mathcal H}(\mathcal{L}_{-hr})
<P(-hr,\sigma) =0
\]
so $h(X)<h$.
If $G$ is amenable then Theorem \ref{intro-main2} gives
\[
P_{\mathrm{Gur}}(-sr,T_\psi)=
P_{\mathrm{Gur}}(-sr,T_{\psi^{\mathrm{ab}}}) =P(-sr,\sigma),
\]
so $h(X)=h$.
\end{proof}

\section{Equidistribution} \label{sectiononequidistribution}

In this final section, we consider the spatial distribution of $\phi$-periodic orbits with trivial $G$-class, in the case where $G$ is amenable. This generalises Theorem 2 of \cite{Sharp93} which deals with $G$ abelian.
Write
\[
\Xi(T,\epsilon) = \#\{\gamma \in \mathcal P(\phi) \hbox{ : } T<l(\gamma) \le T+\epsilon, \ \langle \gamma \rangle =e\}.
\]
Our results will be based on the following definition.

\begin{definition}
We say the $\phi$-periodic orbits with trivial $G$-class are equidistributed with respect to a measure $\mu$ 
if, for every $\epsilon>0$ and 
continuous function $F : M \to \mathbb R$, we have
\[
\lim_{T \to \infty}
\frac{1}{\Xi(T,\epsilon)}
\sum_{\substack{\gamma \in \mathcal P(\phi) : \\
T<l(\gamma) \le T+\epsilon, \ \langle \gamma \rangle =e}}
\frac{1}{l(\gamma)} \int_\gamma F 
= \int F \, d\mu.
\]
\end{definition}

We have the following equidistribution result for Anosov flows.

\begin{theorem} \label{equidistribution}
Let $\phi^t : M \to M$ be an Anosov flow and let $X$ be a regular cover of $M$ with amenable 
covering group $G$. Suppose that the lifted flow $\phi_X^t : X \to X$ is transitive. Then
 the $\phi$-periodic orbits with trivial $G$-class are equidistributed with respect to 
 $\mu_{\langle \xi,\Psi\rangle}$, 
the equilibrium state for $\langle \xi,\Psi \rangle$.
\end{theorem}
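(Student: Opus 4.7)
My plan is to prove equidistribution first for H\"older continuous test functions $F : M \to \mathbb R$; the continuous case then follows by uniform approximation together with the uniform bound $|l(\gamma)^{-1} \int_\gamma F| \le \|F\|_\infty$. Having fixed a H\"older $F$, I would pass to symbolic dynamics via Proposition \ref{Bowen}, writing $\widetilde F(x) := \int_0^{r(x)} F(\theta(x,s))\, ds$, which is H\"older on $\Sigma^+$ and satisfies $\widetilde F^n(x) = \int_\gamma F$ whenever $\{x,\sigma x,\ldots,\sigma^{n-1}x\}$ corresponds to $\gamma$. The boundary contributions from Lemma \ref{remainders} and Corollary \ref{cor-bowenmanning} have strictly smaller exponential rate and so do not affect the asymptotics.

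The central object is the one-parameter family
\begin{equation*}
Z_T(\lambda) := \sum_{\substack{\gamma \in \mathcal P(\phi):\, T < l(\gamma) \le T+\epsilon \\ \langle \gamma \rangle = e}} \exp\!\left( \lambda \int_\gamma F \right), \qquad \lambda \in \mathbb R,
\end{equation*}
and its cumulative counterpart $N_\lambda(T)$. Arguing as in Lemma \ref{coding} with the perturbation $\lambda \widetilde F$ included, the exponential growth rate of $N_\lambda(T)$ is the unique zero $s(\lambda)$ of $s \mapsto P_{\mathrm{Gur}}(-sr + \lambda \widetilde F, T_\psi)$. Amenability enters through Theorem \ref{intro-main2}, which gives
\begin{equation*}
P_{\mathrm{Gur}}(-sr + \lambda \widetilde F, T_\psi) = P_{\mathrm{Gur}}(-sr + \lambda \widetilde F, T_{\psi_Y}) = \inf_{\xi \in \mathbb R^a} P(-sr + \langle \xi, \psi_Y\rangle + \lambda \widetilde F,\sigma),
\end{equation*}
where the second equality is the standard $\mathbb Z^a$-extension analysis of \cite{PollicottSharp94}. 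Real-analyticity of pressure plus the implicit function theorem makes $s(\lambda)$ real-analytic near $0$; differentiating the defining relation at $\lambda = 0$, where the $\xi$-variation vanishes by the first-order condition $\nabla_\xi P = 0$ at the minimiser $\xi(0)$, and translating suspension measures back to $M$, gives
\begin{equation*}
s'(0) = \frac{\int \widetilde F\, dm_0}{\int r\, dm_0} = \int_M F\, d\mu_{\langle \xi,\Psi\rangle}.
\end{equation*}

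To descend from growth rates to windows I would establish $T^{-1} \log Z_T(\lambda) \to s(\lambda)$ uniformly in $\lambda$ on compact intervals. The upper bound is immediate from $Z_T(\lambda) \le N_\lambda(T+\epsilon)$. The lower bound I would obtain by a Tauberian argument applied to the zeta function $\sum_\gamma \frac{\Lambda(\gamma)}{l(\gamma)} e^{-s l(\gamma) + \lambda \int_\gamma F}$, compared via Theorem \ref{intro-main2} with its analogue over the abelian cover $Y$, for which Proposition \ref{asymptotics_abeliancover} furnishes sharp polynomial-correction asymptotics. Once the uniform convergence is in hand, log-convexity of $\lambda \mapsto Z_T(\lambda)$ together with smoothness of $s$ permits exchange of derivative and limit at $\lambda = 0$, and since $l(\gamma) = T + O(\epsilon)$ on the window this yields
\begin{equation*}
\lim_{T\to\infty} \frac{1}{\Xi(T,\epsilon)} \sum_\gamma \frac{1}{l(\gamma)} \int_\gamma F = s'(0) = \int_M F\, d\mu_{\langle \xi,\Psi\rangle}.
\end{equation*}

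The main obstacle I anticipate is the lower half of the uniform convergence $T^{-1}\log Z_T(\lambda) \to s(\lambda)$: for the trivial-$Y$-class count Sharp's asymptotic is available directly, but the trivial-$G$-class analogue must be reached indirectly, since in the non-abelian amenable setting we only have equality of Gurevi\v{c} pressures (Theorem \ref{intro-main2}) rather than sharp orbit-counting. Implementing the Tauberian step will require controlling the zeta function on the critical line $\mathrm{Re}(s) = h(Y)$, and this is where the strength of the amenability hypothesis will be used most heavily.
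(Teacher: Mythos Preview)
Your approach is genuinely different from the paper's. The paper does not differentiate a pressure function $s(\lambda)$; instead it first proves a large deviations estimate (Theorem~\ref{ld}): for any weak$^*$-compact $\mathcal K \subset \mathcal M(\phi)$ with $\mu_{\langle\xi,\Psi\rangle} \notin \mathcal K$, the proportion of trivial-$G$-class orbits in the window $(T,T+\epsilon]$ with $\mu_\gamma \in \mathcal K$ decays exponentially. Equidistribution then follows by taking $\mathcal K = \{\nu : |\int F\,d\nu - \int F\,d\mu_{\langle\xi,\Psi\rangle}| \ge \delta\}$ and letting $\delta \to 0$. The point is that the proof of Theorem~\ref{ld} needs only two ingredients: the growth rate $Q(0) = P(\langle\xi,\Psi\rangle,\phi)$ for the \emph{unweighted} window count $\Xi(T,\epsilon)$ (this is where Theorem~\ref{intro-main} enters), and, for each continuous $F$, merely an \emph{upper} bound $\le Q(F) := P(\langle\xi,\Psi\rangle + F,\phi)$ on the growth of the weighted sums $\sum e^{\int_\gamma F}$, obtained trivially by dropping the constraint $\langle\gamma\rangle = e$. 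A Chebyshev-type inequality and a finite-subcover argument then give the rate $-\rho - Q(0)$ with $\rho = \inf_{\nu\in\mathcal K}\sup_F(\int F\,d\nu - Q(F))$, and $\rho + Q(0) > 0$ follows from uniqueness of the equilibrium state via Lemma~\ref{entropyusc}. No Tauberian step, no symbolic coding, and no two-sided control of weighted sums is needed here.

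Your route is conceptually sound and the identification $s'(0) = \int F\,d\mu_{\langle\xi,\Psi\rangle}$ is correct, but the obstacle you flag is the decisive one. The convexity (Griffiths-type) lemma that allows you to exchange $\partial_\lambda$ with $\lim_{T\to\infty}$ requires the \emph{pointwise limit} $T^{-1}\log Z_T(\lambda) \to s(\lambda)$ for $\lambda$ in a neighbourhood of $0$, not just the $\limsup$; and Theorem~\ref{intro-main2} delivers only equality of Gurevi\v{c} pressures, i.e.\ of $\limsup$'s. Upgrading this to a genuine limit for the trivial-$G$-class weighted count at each $\lambda \ne 0$ is essentially as hard as the equidistribution problem itself, and the Tauberian control on the critical line that you sketch is not furnished by anything assembled in the paper. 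The paper's large deviations route sidesteps exactly this: amenability is invoked once, at $\lambda = 0$, while for every other test function one gets by with the easy upper bound obtained by forgetting the $G$-class condition.
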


Specialising to geodesic flows gives the following corollaries.

\begin{corollary}
Let $\phi^t : SV \to SV$ be the geodesic flow on the unit-tangent bundle over a compact negatively curved manifold $V$
and let $X$ be a regular cover of $M$ with amenable 
covering group $G$. Then
 the $\phi$-periodic orbits with trivial $G$-class are equidistributed with respect to
 the measure of maximal entropy for $\phi$.
\end{corollary}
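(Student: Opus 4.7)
The plan is to reduce the corollary to Theorem \ref{equidistribution} by identifying the equilibrium state $\mu_{\langle \xi, \Psi \rangle}$ with the measure of maximal entropy $\mu_0$ in the geodesic flow setting. The input from the geodesic flow is the time-reversing involution $\iota : SV \to SV$, $\iota(x,v) = (x,-v)$, which satisfies $\iota \circ \phi^t = \phi^{-t} \circ \iota$. Since $h_\phi(\iota_* \mu) = h_\phi(\mu)$ for every $\phi$-invariant $\mu$, uniqueness of the measure of maximal entropy forces $\iota_*\mu_0 = \mu_0$.

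First I would verify that $\iota_*\mu_0 = \mu_0$ implies that the $\Psi_i = \omega_i(\mathcal X_\phi)$ integrate to zero against $\mu_0$. The key observation is that $\iota$ reverses the flow direction, so $d\iota(\mathcal X_\phi) = -\mathcal X_\phi$ and hence, for each closed $1$-form $\omega_i$ on $V$ (pulled back to $SV$),
\[
\Psi_i \circ \iota = \omega_i(d\iota(\mathcal X_\phi)) = -\Psi_i.
\]
Integrating against $\mu_0$ and using its $\iota$-invariance gives $\int \Psi_i \, d\mu_0 = -\int \Psi_i\, d\mu_0 = 0$, so in the notation of section \ref{subsection:windingcycles} the relative winding cycle satisfies $\Phi^{X^{\mathrm{ab}}}_{\mu_0} = 0$.

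Next I would use this to pin down $\xi$. If the rank $a$ of the free part of $G^{\mathrm{ab}}$ is zero then there is nothing to do --- $X^{\mathrm{ab}}$ is (up to a finite cover) equal to $M$ and the statement of Theorem \ref{equidistribution} is equidistribution against $\mu_0$ directly. If $a \ge 1$, recall from section \ref{subsection:windingcycles} that
\[
\nabla \beta(w) = \int_M \Psi \, d\mu_{\langle w, \Psi \rangle},
\]
so $\nabla \beta(0) = \Phi^{X^{\mathrm{ab}}}_{\mu_0} = 0$. Strict convexity of $\beta$ (guaranteed by $Y$-fullness, which in turn follows from transitivity of $\phi^t_X$ via Lemma \ref{transitiveimpliesfull}) then forces the unique critical point $\xi$ of $\beta$ to be $0$, and hence $\mu_{\langle \xi,\Psi\rangle} = \mu_{0}$.

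Finally, applying Theorem \ref{equidistribution} to the geodesic flow yields equidistribution with respect to $\mu_{\langle \xi,\Psi\rangle} = \mu_0$, which is precisely the claim. The only mildly delicate point is the identification $\iota_*\mu_0 = \mu_0$, and this follows immediately from uniqueness of the measure of maximal entropy together with the symmetry $h_\phi(\iota_*\mu) = h_\phi(\mu)$; everything else is a direct unpacking of definitions.
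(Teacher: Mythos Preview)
Your deduction that $\xi = 0$ via the time-reversing involution $\iota$ is correct and is in fact more explicit than the paper, which simply asserts ``Since $\phi$ is a geodesic flow, $\xi = 0$'' without further comment. (One small imprecision: the $\omega_i$ in the paper are closed $1$-forms on $M = SV$, not on $V$; what you actually need is that $\iota^*$ acts trivially on the relevant cohomology classes, and this follows from $\pi\circ\iota = \pi$ for the bundle projection $\pi : SV \to V$. But this is cosmetic.)

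The genuine gap is transitivity of the lifted flow. Theorem~\ref{equidistribution} has as a hypothesis that $\phi_X^t : X \to X$ is transitive, and this hypothesis is \emph{not} part of the statement of the corollary. You invoke transitivity of $\phi_X^t$---once to obtain $Y$-fullness via Lemma~\ref{transitiveimpliesfull}, and implicitly again when you apply Theorem~\ref{equidistribution}---without justifying it. The paper supplies the missing ingredient: since $V$ is compact and negatively curved, $\pi_1(M)$ is non-amenable, so an amenable covering group $G$ is necessarily a proper quotient of $\pi_1(M)$; a theorem of Eberlein then guarantees that the lifted geodesic flow on the corresponding cover is transitive. Without this step your argument is incomplete, since nothing in the corollary's hypotheses rules out, a priori, that $X$ is the universal cover (where there are no periodic orbits at all).
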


\begin{proof}
Since $\phi$ is a geodesic flow, $\xi=0$. Also, since $G$ is amenable, $G$ is not equal to $\pi_1(M)$ and 
hence,
by a result of Eberlein \cite{Eberlein}, the lifted flow $\phi_X^t : X \to X$ is transitive.
Thus the corollary follows from Theorem \ref{equidistribution}.
\end{proof}

\begin{corollary}
Let $V$ be a compact manifold with negative sectional curvatures and let $\widehat V$ be a regular cover 
with amenable covering group $G$. Then the closed geodesics on $V$ with trivial $G$-class are 
are equidistributed with respect to the projection to $V$ of the measure of maximal entropy for the geodesic
flow on the unit-tangent bundle $SV$.
\end{corollary}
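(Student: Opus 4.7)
The plan is to deduce this corollary from the previous one by taking $X$ to be the unit-tangent bundle $S\widehat V$ of $\widehat V$, viewed as a cover of $SV$, and then projecting via the bundle map $\pi : SV \to V$.

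First I would verify that $S\widehat V \to SV$ is a regular cover with covering group $G$. The projection $\pi : SV \to V$ induces a short exact sequence on fundamental groups whose kernel is the image of the fiber $\pi_1$; quotienting $\pi_1(SV)$ by the preimage of the normal subgroup of $\pi_1(V)$ that determines $\widehat V$ gives back $G$, and the cover associated to this subgroup is canonically $S\widehat V$. In particular, the deck group action of $G$ on $\widehat V$ lifts to an action by bundle isometries on $S\widehat V$, so $\pi_1(SV)/\pi_1(S\widehat V) \cong G$ and the lifted geodesic flow on $S\widehat V$ is the geodesic flow of $\widehat V$.

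Next I would identify closed geodesics on $V$ with $\phi$-periodic orbits in $SV$ in the usual way: each closed geodesic $c : \mathbb R/l\mathbb Z \to V$ corresponds to the periodic orbit $\gamma = \{(c(t),\dot c(t)) : t \in [0,l]\}$ of $\phi^t$, with $l(\gamma) = l$. Under the identification $\pi_1(SV) \to \pi_1(V)$ described above, the $G$-class $\langle \gamma \rangle_X$ of the periodic orbit in $SV$ agrees with the $G$-class of the closed geodesic $c$ relative to the cover $\widehat V \to V$. Thus the counting function $\Xi(T,\epsilon)$ of closed geodesics on $V$ with trivial $G$-class coincides with the corresponding counting function for periodic orbits of $\phi^t$ on $SV$ with trivial $G$-class.

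Now, for any continuous $F : V \to \mathbb R$, the function $F \circ \pi : SV \to \mathbb R$ is continuous, and
\[
\int_\gamma F \circ \pi \, dt = \int_0^{l(\gamma)} F(c(t)) \, dt = \int_{c} F,
\]
so the spatial average of $F$ along a closed geodesic on $V$ is exactly the spatial average of $F \circ \pi$ along the corresponding $\phi$-periodic orbit in $SV$. Applying the previous corollary (valid since $G$ is amenable, which forces $G \neq \pi_1(V)$ and hence the transitivity of $\phi_X^t$ by the result of Eberlein cited above) gives
\[
\lim_{T \to \infty}
\frac{1}{\Xi(T,\epsilon)}
\sum_{\substack{c : T < l(c) \le T+\epsilon \\ \langle c \rangle = e}}
\frac{1}{l(c)} \int_c F
=
\int_{SV} F\circ\pi \, d\mu_0
=
\int_V F \, d(\pi_*\mu_0),
\]
where $\mu_0$ is the measure of maximal entropy for the geodesic flow on $SV$. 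Since this holds for every continuous $F$, the closed geodesics with trivial $G$-class equidistribute with respect to $\pi_*\mu_0$. No real obstacle arises: the only point that requires a brief verification is the compatibility of the $G$-class on $V$ with the $G$-class on $SV$ under the bundle projection, which is immediate from the construction of the covers.
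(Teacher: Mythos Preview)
Your proposal is correct and is exactly the argument the paper has in mind: the corollary is stated in the paper without proof, as an immediate consequence of the preceding corollary on $SV$ obtained by pulling back continuous functions on $V$ along the footpoint projection $\pi:SV\to V$ and pushing forward the measure of maximal entropy. Your additional verifications (that $S\widehat V\to SV$ is a regular $G$-cover and that the $G$-class of a closed geodesic agrees with that of the corresponding periodic orbit) are the natural details one would fill in, and they are unproblematic.
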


If $V$ has constant negative curvature then the measure of maximal entropy is equal to the volume measure 
and so we haver the following.

\begin{corollary}
Let $V$ be a compact hyperbolic manifold and let $\widehat V$ be a regular cover 
with amenable covering group $G$. Then the closed geodesics on $V$ with trivial $G$-class are 
are equidistributed with respect to the volume measure on $V$.
\end{corollary}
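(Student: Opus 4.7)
The plan is to deduce this corollary directly from the preceding one by identifying the measure of maximal entropy for the geodesic flow on a hyperbolic manifold and computing its projection to the base.

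First I would invoke the previous corollary: since $V$ has negative sectional curvatures (in particular, constant negative sectional curvature), and $\widehat V \to V$ is a regular cover with amenable covering group $G$, the closed geodesics on $V$ with trivial $G$-class equidistribute, as $T \to \infty$, with respect to the projection $\pi_* \mu_0$ to $V$ of the measure of maximal entropy $\mu_0$ for the geodesic flow $\phi^t$ on $SV$, where $\pi : SV \to V$ is the natural projection.

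Next I would identify $\mu_0$ in the constant-curvature case. For a compact hyperbolic manifold, the Liouville measure (the $\phi^t$-invariant measure locally equal to the product of the Riemannian volume on $V$ with the normalized Lebesgue measure on the fibres $S_xV$) is well known to be the unique measure of maximal entropy for the geodesic flow; this goes back to the work of Sinai and is a consequence of the fact that, in constant curvature, the stable and unstable Jacobians of $\phi^t$ are constant, so the geometric potential is cohomologous to a constant and the equilibrium state for the zero potential coincides with the Liouville measure (up to normalization). Thus $\mu_0 = (\mathrm{vol}_V \otimes \lambda_{\mathrm{fibre}})/\mathrm{vol}(SV)$.

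Finally I would compute the push-forward: for any continuous $f : V \to \mathbb R$,
\[
\int_{SV} (f \circ \pi) \, d\mu_0 = \frac{1}{\mathrm{vol}(SV)} \int_V f(x) \left( \int_{S_xV} d\lambda_{\mathrm{fibre}}\right) d\mathrm{vol}_V(x) = \frac{1}{\mathrm{vol}(V)} \int_V f \, d\mathrm{vol}_V.
\]
Hence $\pi_* \mu_0$ is the normalized volume measure on $V$. Combining this with the previous corollary (and noting that, since $G$ is amenable, Eberlein's theorem guarantees transitivity of the lifted flow, as already used) yields the stated equidistribution. There is no real obstacle here; the only nontrivial content is the identification of the measure of maximal entropy with (normalized) Liouville measure in constant curvature, which is classical.
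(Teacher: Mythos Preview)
Your proposal is correct and follows exactly the paper's approach: the paper simply observes (in the sentence preceding the corollary) that in constant negative curvature the measure of maximal entropy coincides with the Liouville/volume measure, so the projection to $V$ is the normalized volume, and the corollary is immediate from the previous one. Your write-up just fills in the details of that one-line deduction.
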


Theorem \ref{equidistribution} is a consequence of the following large deviations result.
For $\gamma \in \mathcal P(\phi)$, let $\mu_\gamma$ denote the $\phi$-invariant probability 
measure defined by
\[
\int F \, d\mu_\gamma = \frac{1}{l(\gamma)}\int_\gamma F.
\]

\begin{theorem} \label{ld}
Let $\phi^t : M \to M$ be an Anosov flow and let $X$ be a regular cover of $M$ with amenable 
covering group $G$. Suppose that the lifted flow $\phi_X^t : X \to X$ is transitive. Then, for any 
weak$^*$-compact set $\mathcal K \subset \mathcal M(\phi)$ such that 
$\mu_{\langle \xi,\Psi\rangle}
\notin \mathcal K$, we have
\[
\limsup_{T \to \infty}
\frac{1}{T} \log
\left(
\frac{
\#\{\gamma \in \mathcal P(\phi) \hbox{ : } T < l(\gamma) \le T+ \epsilon, \ 
\langle \gamma \rangle =e, \ \mu_\gamma \in \mathcal K\}
}
{
\#\{\gamma \in \mathcal P(\phi) \hbox{ : } T < l(\gamma) \le T+ \epsilon, \ 
\langle \gamma \rangle =e\}
} 
\right)<0.
\]
\end{theorem}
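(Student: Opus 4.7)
The plan is to bound the numerator by $Ce^{(\rho-\delta)T}$ and the denominator below by $C'e^{(\rho-\delta/2)T}$ for some $\delta>0$, where $\rho = h(X) = h(Y) = P(\langle\xi,\Psi\rangle,\phi)$; the identity $h(X) = h(Y)$ comes from Theorem~\ref{intro-main} and amenability. The entire argument passes through the symbolic coding of Proposition~\ref{Bowen} and the group extensions $T_\psi, T_{\psi_Y}$ of Section~\ref{coversandgroupextensions}.

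For the numerator, I combine a standard Chebyshev-type large deviations argument with the amenability input of Theorem~\ref{intro-main2}. Since $\mu_{\langle\xi,\Psi\rangle}$ is the unique equilibrium state of $\langle\xi,\Psi\rangle$, every other $\phi$-invariant probability satisfies $h_\phi(\nu)+\int\langle\xi,\Psi\rangle\,d\nu < \rho$; Lemma~\ref{entropyusc} and weak$^*$ compactness of $\mathcal K$ then yield $\delta>0$ with $\sup_{\nu\in\mathcal K}\bigl(h_\phi(\nu)+\int\langle\xi,\Psi\rangle\,d\nu\bigr)\le \rho-3\delta$. The variational formula in Lemma~\ref{entropyusc} lets me cover $\mathcal K$ by finitely many weak$^*$-open sets $U_i$ and choose H\"older functions $F_i: M\to\mathbb R$ (of the form $\langle\xi,\Psi\rangle$ plus a small entropy-realising correction) such that the Chebyshev bound
\[
\#\{\gamma: T<l(\gamma)\le T+\epsilon,\ \langle\gamma\rangle=e,\ \mu_\gamma\in U_i\} \le C_i e^{-a_i T} \sum_{\substack{\gamma: T<l(\gamma)\le T+\epsilon\\ \langle\gamma\rangle=e}} e^{\int_\gamma F_i}
\]
holds for suitable $a_i$. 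The weighted sum on the right is controlled by observing that its generating function in $s$ has abscissa of convergence equal to the unique zero of $s\mapsto P_{\mathrm{Gur}}(-sr+F_i\circ\theta, T_\psi)$ (by an argument parallel to Lemma~\ref{coding}(3)); by Theorem~\ref{intro-main2} this zero coincides with that on the abelian extension $T_{\psi_Y}$, and the latter is bounded via the Variational Principle by a supremum over $\nu$ with $\Phi_\nu^Y=0$. Combining this with the Chebyshev exponent $a_i$ makes the net exponent of the window count come out to $\rho - 2\delta$, so summing over $i$ yields the numerator bound $Ce^{(\rho-\delta)T}$.

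For the denominator, every $\gamma$ with $\langle\gamma\rangle=e$ already has $[\gamma]=0$, so applying Proposition~\ref{asymptotics_abeliancover} to $Y$ and differencing the cumulative asymptotic gives the sharp windowed estimate $\#\{\gamma: T<l(\gamma)\le T+\epsilon,\ [\gamma]=0\} \sim C_Y e^{\rho T}/T^{1+a/2}$. The main obstacle is converting the $\limsup$ identity $h(X)=h(Y)$ into the uniform lower bound $\#\{\gamma: T<l(\gamma)\le T+\epsilon,\ \langle\gamma\rangle=e\}\ge C' e^{(\rho-\delta/2)T}$ valid for \emph{all} large $T$: my plan is to show that a positive fraction of the $[\gamma]=0$ orbits in each window $(T,T+\epsilon]$ actually lift to $\phi_X$-periodic orbits, by adapting the Stadlbauer-type measures $\nu_{\eta,g}$ of Section~\ref{amenabilityandpressure} together with the Banach-mean construction in the proof of Theorem~\ref{intro-main2} to produce a controlled density on the $G/[G,G]$-fibres above any fixed base orbit, and then using amenability of $G$ to trap a positive mass on the identity coset. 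Dividing the two bounds then gives $\limsup T^{-1}\log(\mathrm{ratio})\le -\delta/2$, as required.
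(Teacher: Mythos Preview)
Your numerator strategy is workable but unnecessarily heavy. The paper bypasses Theorem~\ref{intro-main2} and the Gurevi\v{c}-pressure machinery entirely for this half of the argument: since $\langle\gamma\rangle=e$ forces $[\gamma]=0$ and hence $\int_\gamma\langle\xi,\Psi\rangle=0$, one simply inserts the harmless factor $e^{\int_\gamma\langle\xi,\Psi\rangle}=1$, then \emph{drops the constraint $\langle\gamma\rangle=e$ altogether} and bounds by the full unconstrained sum $\sum_{T<l(\gamma)\le T+\epsilon}\exp\int_\gamma(\langle\xi,\Psi\rangle+F)$, whose growth rate is $Q(F):=P(\langle\xi,\Psi\rangle+F,\phi)$. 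The Chebyshev/finite-cover step is then run with $Q$ in place of $P$, and uniqueness of $\mu_{\langle\xi,\Psi\rangle}$ together with Lemma~\ref{entropyusc} gives $\inf_{\nu\in\mathcal K}\sup_F(\int F\,d\nu-Q(F))+Q(0)>0$. No symbolic coding, no group extension, no amenability is used for the numerator; amenability enters only through the denominator via Theorem~\ref{intro-main}.

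Your denominator plan has a genuine gap. The assertion that a \emph{positive fraction} of the $[\gamma]=0$ orbits in each window lift to $\phi_X$-periodic orbits is false in general: already for $G$ the discrete Heisenberg group (amenable, $G^{\mathrm{ab}}\cong\mathbb Z^2$), local-limit heuristics predict an extra polynomial factor separating the two counts, so the fraction tends to zero. More importantly, the machinery you propose to invoke --- the Stadlbauer measures $\nu_{\eta,g}$ and the Banach-mean trick from the proof of Theorem~\ref{intro-main2} --- was built to prove a $\limsup$ \emph{upper} bound (namely $P_{\mathrm{Gur}}(f,T_{\psi^{\mathrm{ab}}})\le P_{\mathrm{Gur}}(f,T_\psi)$), and there is no evident way to extract from it the uniform windowed \emph{lower} bound you need. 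The paper simply records the denominator growth rate as $Q(0)$, citing Theorem~\ref{intro-main} (which gives $h(X)=h(X^{\mathrm{ab}})=Q(0)$); any polynomial discrepancy between the $G$-count and the $G^{\mathrm{ab}}$-count is swallowed by the strict exponential gap $-\rho_{\mathcal K}-Q(0)<0$ established for the numerator.
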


\begin{proof}
To shorten some formulae, we will write 
\[
Q(F) := P(\langle \xi,\Psi \rangle+F,\phi).
\]
We begin by noting the following. By Theorem \ref{intro-main}, 
\begin{equation} \label{ld-exactgrowthrate}
\lim_{T \to \infty}
\frac{1}{T} \log
\#\{\gamma \in \mathcal P(\phi) \hbox{ : } T < l(\gamma) \le T+ \epsilon, \ 
\langle \gamma \rangle =e\}
= Q(0)
\end{equation}
and, for any continuous function $F : M \to \mathbb R$,
\begin{align*}
\sum_{\substack{\gamma \in \mathcal P(\phi) \\
T < l(\gamma) \le T+\epsilon, \, \langle \gamma \rangle =e}}
\exp\left(\int_\gamma F\right) 
&=
\sum_{\substack{\gamma \in \mathcal P(\phi) \\
T < l(\gamma) \le T+\epsilon, \, \langle \gamma \rangle =e}}
\exp\left(\int_\gamma \langle \xi,\Psi \rangle + F\right) 
\\
&\le 
\sum_{\substack{\gamma \in \mathcal P(\phi) \\
T < l(\gamma) \le T+\epsilon}}
\exp\left(\int_\gamma \langle \xi,\Psi \rangle + F\right), 
\end{align*}
giving 
\begin{equation} \label{ld-upperboundongrowthrate}
\lim_{T \to \infty}
\frac{1}{T} \log
\sum_{\substack{\gamma \in \mathcal P(\phi) \\
T < l(\gamma) \le T+\epsilon, \, \langle \gamma \rangle =e}}
\exp\left(\int_\gamma F\right) 
\le Q(F).
\end{equation}
 
Now define 
\[
\rho = \rho_{\mathcal K} := \inf_{\nu \in \mathcal K} \sup_{F \in C(M,\mathbb R)} 
\left(\int F \, d\nu - Q(F)\right).
\]
Fix $\delta>0$. For every $\nu \in \mathcal K$, there exists $F \in C(M,\mathbb R)$ such that 
\[
\int F \, d\nu - Q(F) > \rho -\delta.
\]
Thus we have
\[
\mathcal K \subset
\bigcup_{F \in C(M,\mathbb R)} \left\{\nu \in \mathcal M(\phi)
\hbox{ : } \int F \, d\nu - Q(F) > \rho-\delta\right\}.
\]
Since $\mathcal K$ is weak$^*$-compact, we can find a finite subcover
\[
\mathcal K \subset
\bigcup_{i=1}^k \left\{\nu \in \mathcal M(\phi)
\hbox{ : } \int F_i \, d\nu - Q(F) > \rho-\delta\right\}.
\]
This gives us the inequality
\begin{align*}
&\#\{\gamma \in \mathcal P(\phi) \hbox{ : }
T < l(\gamma) \le T+\epsilon, \, \langle \gamma \rangle =e, \, \mu_\gamma \in \mathcal K\} \\
&\le
\sum_{i=1}^k \#\left\{\gamma \in \mathcal P(\phi) \hbox{ : }
T < l(\gamma) \le T+\epsilon, \, \langle \gamma \rangle =e, \, \int F_i \, d\mu_\gamma -Q(F) > \rho-\delta\right\}
\\
&\le
\sum_{i=1}^k 
\sum_{\substack{\gamma \in \mathcal P(\phi) \\
T < l(\gamma) \le T+\epsilon, \, \langle \gamma \rangle =e}}
 \exp\left(-l(\gamma)(Q(F_i)+(\rho-\delta))+\int_\gamma F_i\right)
\\
&\le
\sum_{i=1}^k \max\{e^{-T(Q(F_i)+(\rho-\delta))}, e^{-(T+\epsilon)(Q(F_i)+(\rho-\delta))}\}
\sum_{\substack{\gamma \in \mathcal P(\phi) \\
T < l(\gamma) \le T+\epsilon, \, \langle \gamma \rangle =e}}
\exp\left(\int_\gamma F_i\right)
\end{align*}
and hence, using (\ref{ld-upperboundongrowthrate}) we obtain the growth rate estimate
\begin{align*}
&\limsup_{T \to \infty} \frac{1}{T} \log
\#\{\gamma \in \mathcal P(\phi) \hbox{ : }
T < l(\gamma) \le T+\epsilon, \, \langle \gamma \rangle =e, \, \mu_\gamma \in \mathcal K\}
\le -\rho + \delta.
\end{align*}
Since $\delta>0$ is arbitrary, we combine this with (\ref{ld-exactgrowthrate}) to deduce that
\[
\limsup_{T \to \infty}
\frac{1}{T} \log
\left(
\frac{
\#\{\gamma \in \mathcal P(\phi) \hbox{ : } T < l(\gamma) \le T+ \epsilon, \ 
\langle \gamma \rangle =e, \ \mu_\gamma \in \mathcal K\}
}
{
\#\{\gamma \in \mathcal P(\phi) \hbox{ : } T < l(\gamma) \le T+ \epsilon, \ 
\langle \gamma \rangle =e\}
} 
\right) \le -\rho -Q(0).
\]

To complete the proof, we will show that $\rho+Q(0)>0$. First note that if 
$\nu \neq \mu_{\langle \xi.\Psi\rangle}$ then
\begin{align*}
&\sup_{F \in C(M,\mathbb R)} 
\left(\int F \, d\nu -Q(F) +Q(0)\right) \\
&= \sup_{F \in C(M,\mathbb R)} 
\left(\int F \, d\nu -P(\langle \xi,\Psi \rangle+F,\phi) +P(\langle \xi,\Psi \rangle,\phi)\right) \\
&= \sup_{F \in C(M,\mathbb R)} 
\left(\int (F - \langle \xi,\Psi\rangle) \, d\nu -P(F,\phi) + P(\langle \xi,\Psi \rangle,\phi)\right) \\
&= \sup_{F \in C(M,\mathbb R)}
\left(\int F \, d\nu -P(F,\phi)\right) + P(\langle \xi,\Psi \rangle,\phi) - \int \langle \xi,\Psi \rangle \, d\nu \\
&= -\inf_{F \in C(M,\mathbb R)}
\left(P(F,\phi)- \int F \, d\nu\right) + P(\langle \xi,\Psi \rangle,\phi) - \int \langle \xi,\Psi \rangle \, d\nu.
\end{align*}
By Lemma \ref{entropyusc}, this is equal to
\[
-h_\phi(\nu) + P(\langle \xi,\Psi \rangle,\phi) - \int \langle \xi,\Psi \rangle \, d\nu >0,
\]
where the last inequality comes from the uniqueness of the equilibrium state $\mu_{\langle \xi,\Psi\rangle}$.
Also by Lemma \ref{entropyusc}, the map on 
$\mathcal M(\phi)$  given by 
\[
\nu \mapsto 
-h_\phi(\nu) + P(\langle \xi,\Psi \rangle,\phi) - \int \langle \xi,\Psi \rangle \, d\nu
\]
is lower semi-continuous and so we can conclude that $\rho+Q(0)>0$, as required.
\end{proof}

\begin{proof}[Proof of Theorem \ref{equidistribution}]
Given a function $F \in C(M,\mathbb R)$,
choose $\delta>0$ and set $\mathcal K$ to be the compact set
\[
\mathcal K = \left\{\nu \in \mathcal M(\phi) \hbox{ : }
\left|\int F \, d\nu - \int F \, d\mu_{\langle \xi,\Psi \rangle}\right| \ge \delta\right\}.
\] 
Applying Theorem \ref{ld}, we have
\begin{align*}
&\frac{1}{\Xi(T,\epsilon)}
\sum_{\substack{\gamma \in \mathcal P(\phi) : \\
T<l(\gamma) \le T+\epsilon \\ \langle \gamma \rangle =e}}
 \int F \, d\mu_\gamma
=
\frac{1}{\Xi(T,\epsilon)}
\sum_{\substack{\gamma \in \mathcal P(\phi) : \\
T<l(\gamma) \le T+\epsilon \\ \langle \gamma \rangle =e, \ \mu_\gamma \notin \mathcal K}}
\int F \, d\mu_\gamma +O(e^{-\eta T}),
\end{align*}
for any $0 <\eta < \rho_{\mathcal K}  + P(\langle \xi,\Psi \rangle,\phi)$, so we only need to consider the limit of the first term on the Right Hand Side. 
We have
\begin{align*}
\frac{1}{\Xi(T,\epsilon)}
\sum_{\substack{\gamma \in \mathcal P(\phi) : \\
T<l(\gamma) \le T+\epsilon \\ \langle \gamma \rangle =e, \ \mu_\gamma \notin \mathcal K}}
\int F \, d\mu_\gamma
&=
(1-O(e^{-\eta T}))
\int F \, d\mu_{\langle \xi,\Psi \rangle}
\\
&+
\frac{1}{\Xi(T,\epsilon)}
\sum_{\substack{\gamma \in \mathcal P(\phi) : \\
T <l(\gamma) \le T+\epsilon \\ \langle \gamma \rangle =e, \ \mu_\gamma \notin \mathcal K}}
\left( \int F \, d\mu_\gamma - \int F \, d\mu_{\langle \xi,\Psi \rangle}\right)
\end{align*}
and therefore
\[
\limsup_{T \to \infty} \frac{1}{\Xi(T,\epsilon)}
\sum_{\substack{\gamma \in \mathcal P(\phi) : \\
T<l(\gamma) \le T+\epsilon \\ \langle \gamma \rangle =e, \ \mu_\gamma \notin \mathcal K}}
\int F \, d\mu_\gamma
\le \int F \, d\mu_{\langle \xi,\Psi \rangle} + \delta
\]
and 
\[
\liminf_{T \to \infty} \frac{1}{\Xi(T,\epsilon)}
\sum_{\substack{\gamma \in \mathcal P(\phi) : \\
T<l(\gamma) \le T+\epsilon \\ \langle \gamma \rangle =e, \ \mu_\gamma \notin \mathcal K}}
\int F \, d\mu_\gamma
\ge \int F \, d\mu_{\langle \xi,\Psi \rangle} - \delta.
\]
Since $\delta>0$ is arbitrary, this completes the proof.
\end{proof}

\end{document}